\documentclass[12pt,a4paper,intlimits,sumlimits]{amsart}

\usepackage{amsmath, amsthm, mathtools}
\usepackage{amsfonts}
\usepackage[alphabetic,nobysame]{amsrefs}
\usepackage{graphicx}
\usepackage{framed}
\usepackage{amssymb}
\usepackage{esvect}
\usepackage{xcolor}
\usepackage{eucal}
\usepackage{mathrsfs}
\usepackage{hyperref}
\usepackage[english]{babel}
\usepackage{mathrsfs}
\usepackage{esint}
\usepackage{accents}

\def \N {\mathbb{N}}
\def \R {\mathbb{R}}

\theoremstyle{definition}
\newtheorem{definition}{Definition}[section]

\newtheorem{remark}[definition]{Remark}

\theoremstyle{plain}
\newtheorem{theorem}[definition]{Theorem}
\newtheorem{proposition}[definition]{Proposition}
\newtheorem{lemma}[definition]{Lemma}
\newtheorem{corollary}[definition]{Corollary}

\numberwithin{equation}{section}

\usepackage{geometry}
\geometry{
 a4paper,
 total={170mm,257mm},
 left=20mm,
 top=20mm,
 }

\renewcommand{\epsilon}{\varepsilon}
\newcommand{\e}{\varepsilon}

\renewcommand{\le}{\leqslant}

\renewcommand{\ge}{\geqslant}

\allowdisplaybreaks

\title[Nonlocal operators and integrable data]{Nonlocal operators in divergence form \\ and existence theory
for integrable data}

\author[D. Arcoya, S. Dipierro, E. Proietti Lippi, C. Sportelli, E. Valdinoci]{David Arcoya, Serena Dipierro, Edoardo Proietti Lippi, \\Caterina Sportelli and Enrico Valdinoci}

\address{David Arcoya: Departamento de Análisis Matemático, Universidad de Granada, 18071 Granada, Spain}
\email{darcoya@ugr.es}

\address{Serena Dipierro: Department of Mathematics and Statistics, The University of Western Australia, 35 Stirling Highway, Crawley, Perth, WA 6009, Australia}
\email{serena.dipierro@uwa.edu.au}

\address{Edoardo Proietti Lippi: Department of Mathematics and Statistics, The University of Western Australia, 35 Stirling Highway, Crawley, Perth, WA 6009, Australia}
\email{edoardo.proiettilippi@uwa.edu.au}

\address{Caterina Sportelli: Department of Mathematics and Statistics, The University of Western Australia, 35 Stirling Highway, Crawley, Perth, WA 6009, Australia}
\email{caterina.sportelli@uwa.edu.au}

\address{Enrico Valdinoci: Department of Mathematics and Statistics, The University of Western Australia, 35 Stirling Highway, Crawley, Perth, WA 6009, Australia}
\email{enrico.valdinoci@uwa.edu.au}

\begin{document}

\maketitle

\begin{abstract}
We present an existence and uniqueness result for weak solutions of Dirichlet boundary value problems governed by a nonlocal operator in divergence form and
in the presence of a datum which is assumed to belong only to~$L^1(\Omega)$ and to be suitably dominated.

We also prove that the solution that we find converges, as~$s\nearrow 1$, to a solution of the local counterpart problem,
recovering the classical result as a limit case.
This requires some nontrivial customized uniform estimates
and representation formulas,  given that the datum is only in~$L^1(\Omega)$ and therefore the usual regularity theory cannot be leveraged to our benefit in this framework.

The limit process uses a
nonlocal operator, obtained as an affine transformation
of a homogeneous kernel,
which recovers, in the limit as~$s\nearrow 1$,  every classical operator in divergence form.  
\end{abstract}

\tableofcontents

\section{Introduction}

\subsection{Nonlocal existence theory for integrable data}
A long-established line of research focuses on the construction of solutions for suitable elliptic equations. In the traditional setting, the data of the equation are typically required to belong to some~$L^p$-space with~$p > 1$, as this allows for the application of regularity theory and standard functional analysis techniques.  

When~$p = 1$, these methods are typically inaccessible. For instance, as is well known, regularity theory fails in the presence of merely integrable data (see e.g. Sections~4.1 and~5.1 in~\cite{MR4784613}). Consequently, no general existence theory for second-order elliptic equations is available in this context.

However, the results presented in~\cite{MR3304596, MR3760750} established an existence theory for equations
in divergence form under certain structural assumptions on the equation. These assumptions permit the implementation of a tailored uniform approximation, which produces the desired solution via a limit procedure.

One of the goals of this article is to extend this existence theory to the nonlocal setting.
To this end, we consider an interaction kernel~$K:\R^n\times\R^n\to[0,+\infty]$ such that
\begin{equation}\label{QrehtgDWFfbg0jolwfdv:Qwfgb}
\frac{c\,\chi_{[0,\varrho)}\big(|x-z|\big)}{|x-z|^{n+2s}}\le K(x,z)\le \frac{C}{|x-z|^{n+2s}},
\end{equation}
for some~$c$, $C\in(0,+\infty)$, $\varrho\in(0,+\infty]$ and~$s\in(0,1)$, where, as usual, $\chi_{[0,\varrho)}$ denotes the characteristic function of $[0,\varrho)$, that is
$$ \chi_{[0,\varrho)}\big(|x-z|\big):=\begin{cases} 1&{\mbox{ if }}|x-z|\in [0,\varrho),\\0&{\mbox{ otherwise.}}\end{cases}$$
To be compatible with the divergence structure, it is convenient to assume the symmetry
condition
\begin{equation}\label{SyMQWD:K} K(x, z)=K(z, x) \ \mbox{ for any } x, z\in\R^n,
\end{equation}
see e.g.~\cite[Section 2.5]{ROSOTON} and the references therein
for more information about nonlocal operators with a divergence structure.

The nonlocal operator associated with the interaction kernel~$K$ can thus be written in the poinwise form
\begin{equation}\label{qsdc:OPK}\begin{split}
\mathcal L_K u(x)&:= \mbox{P.V. }\int_{\R^n} \big(u(x) - u(z)\big)\,K(x,z)\, dz\\
&= \mbox{P.V. }\int_{\R^n} \big(u(x) - u(x-y)\big)\,K(x,x-y)\, dy\\
&:=\lim_{\varepsilon\searrow0}\int_{\R^n\setminus B_\varepsilon} \big(u(x) - u(x-y)\big)\,K(x,x-y)\, dy,
\end{split}\end{equation}
where P.V. stands for the principal value notation.

We observe that the kernel~$\R^n\ni y\mapsto K(x,x-y)$ is not assumed to be even in~$y$.
This lack of symmetry prevents, in principle, the pointwise calculation in~\eqref{qsdc:OPK} when $s\in\left[\frac12,1\right)$, even for smooth functions. Thus, when $s\in\left[\frac12,1\right)$, to make sense of the above setting it is customary
to assume an ``almost symmetry'' condition for the kernel with respect to the variable~$y$.
Specifically, if one wishes to use the pointwise form of the operator in~\eqref{qsdc:OPK}
it comes in handy to assume that\footnote{Other technical
conditions can be used to replace~\eqref{COND:qsdfdv}, see e.g.~\cite[Section~2.5.2]{ROSOTON}
for an extensive discussion.}
\begin{equation}\label{COND:qsdfdv}
\sup_{x\in\R^n}\int_{B_1} |y|\,\big|K(x,x+y)-K(x,x-y)\big|\,dy<+\infty,
\end{equation}
see in particular Lemma~\ref{apkdjoscvndfSHDKB<qrg4596uyh43io}.

In this paper, however, we will not assume condition~\eqref{COND:qsdfdv}
to prove our existence results in Theorems \ref{main2} and \ref{thmminimo}, since they rely only on weak formulations. However, it will be convenient to pass through condition~\eqref{COND:qsdfdv} when dealing with limit properties.
\medskip

In the rest of this paper, we will denote by~$\Omega$
an open bounded subset of~$\R^n$ with Lipschitz boundary.
Also, we will denote by~$H^s_0(\Omega)$ the space of functions in~$H^s(\R^n)$ such that~$u=0$ a.e. in~$\R^n\setminus\Omega$
(see Section~\ref{SEC4iueryihglsdg} for the details related to the functional setting).

In this framework, our existence result reads as follows:

\begin{theorem}\label{main2}
Let~$h:\R\to\R$. Suppose that
\begin{equation}\label{ginftydef}
h \mbox{ is continuous, odd and strictly increasing}
\end{equation}
and let
\begin{equation}\label{ginftydef2}
\gamma:=\lim_{t\to+\infty} h(t)\in(0,+\infty].
\end{equation}

Let~$a$, $f\in L^1(\Omega)$. Assume that
\begin{equation}\label{GQcond}
\mbox{there exists $Q\in (0, \gamma)$ such that } |f(x)|\le Q\, a(x) \mbox{ for a.e. }x\in \Omega.
\end{equation}

Then, there exists a unique weak solution~$u\in H^s_0(\Omega)\cap L^\infty(\Omega)$ of
\begin{equation}\label{davidnonlocal2}
\begin{cases}
\mathcal L_K u + a h(u) = f &\mbox{ in } \Omega,\\
u=0 &\mbox{ in } \R^n\setminus\Omega.
\end{cases}
\end{equation}

In addition, we have that
\begin{align}\label{GLinftyu}
&\|u\|_{L^\infty(\Omega)}\le h^{-1}(Q)\\ \label{seminormaus}
\mbox{ and }\quad & \iint_{\R^n\times\R^n} \big(u(x) - u(z)\big)^2\,K(x,z)\,dx\, dz
\le 2h^{-1}(Q) \,\big(\|f\|_{L^1(\Omega)} + Q \|a\|_{L^1(\Omega)}\big) .
\end{align}
\end{theorem}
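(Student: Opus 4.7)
The plan is the nonlocal counterpart of the Stampacchia-type approach of~\cite{MR3304596,MR3760750}: approximate the $L^1$-data by $L^\infty$-data, solve the approximating problems variationally, derive uniform $L^\infty$ and energy bounds, and pass to the limit. Note first that \eqref{GQcond} forces $a \ge 0$ a.e. Set
$$a_k := \min\{a, k\}, \qquad f_k := \mathrm{sgn}(f)\,\min\{|f|,\, Q\, a_k\},$$
so $a_k, f_k \in L^\infty(\Omega)$, $|f_k| \le Q\, a_k$, and $a_k \to a$, $f_k \to f$ in $L^1(\Omega)$ by dominated convergence. For each $k$, a unique weak solution $u_k \in H^s_0(\Omega)$ of $\mathcal L_K u_k + a_k h(u_k) = f_k$ with zero exterior datum is obtained as the minimizer on $H^s_0(\Omega)$ of the strictly convex coercive functional
$$J_k(v) := \frac{1}{4}\iint_{\R^{2n}} \bigl(v(x)-v(z)\bigr)^2 K(x,z) \, dx\, dz + \int_\Omega a_k H(v)\, dx - \int_\Omega f_k\, v\, dx,$$
where $H(t) := \int_0^t h(\tau)\, d\tau$ is convex and non-negative (by the oddness and monotonicity of $h$); coercivity uses the lower bound on $K$ in \eqref{QrehtgDWFfbg0jolwfdv:Qwfgb} together with the Poincaré inequality on $H^s_0(\Omega)$.

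The heart of the argument is the uniform $L^\infty$-bound $\|u_k\|_{L^\infty(\Omega)} \le M := h^{-1}(Q)$. Testing the weak equation with $v := (u_k - M)_+ \in H^s_0(\Omega)$ (which vanishes outside $\Omega$ since $u_k$ does and $M > 0$) and using the elementary pointwise inequality
$$\bigl(u_k(x) - u_k(z)\bigr)\bigl(v(x) - v(z)\bigr) \ge \bigl(v(x) - v(z)\bigr)^2,$$
verified by a case analysis on the positions of $u_k(x), u_k(z)$ relative to $M$, yields
$$\frac{1}{2}\iint_{\R^{2n}} \bigl(v(x) - v(z)\bigr)^2 K(x,z)\, dx\, dz \le \int_{\{u_k > M\}} \bigl(f_k - a_k h(u_k)\bigr) v \, dx.$$
On $\{u_k > M\}$ one has $h(u_k) > h(M) = Q$ and $a_k \ge 0$, hence $a_k h(u_k) \ge Q a_k \ge f_k$; the right-hand side is thus non-positive, forcing $v \equiv 0$, i.e. $u_k \le M$ a.e. The lower bound $u_k \ge -M$ is symmetric, using $(u_k + M)_-$ and $h(-M) = -Q$ from the oddness of $h$.

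Testing instead with $u_k$ itself and using $|u_k| \le M$ and $|h(u_k)| \le Q$ gives the energy bound
$$\iint_{\R^{2n}} \bigl(u_k(x) - u_k(z)\bigr)^2 K(x,z)\, dx\, dz \le 2M\bigl(\|f\|_{L^1(\Omega)} + Q\|a\|_{L^1(\Omega)}\bigr),$$
which is \eqref{seminormaus} at the level of $u_k$. Combined with the uniform $L^\infty$ bound and Rellich compactness of $H^s_0(\Omega) \hookrightarrow L^2(\Omega)$ (granted by the two-sided control on $K$), one extracts a subsequence with $u_k \rightharpoonup u$ in $H^s_0(\Omega)$, $u_k \to u$ strongly in $L^2(\Omega)$ and a.e., and $\|u\|_{L^\infty(\Omega)} \le M$ inherited. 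For any $\varphi \in H^s_0(\Omega) \cap L^\infty(\Omega)$, the quadratic-form term passes by weak convergence, $\int_\Omega f_k \varphi \to \int_\Omega f\varphi$ by dominated convergence, and $a_k h(u_k) \to a h(u)$ in $L^1(\Omega)$ by dominated convergence with envelope $|a_k h(u_k)| \le Q a \in L^1(\Omega)$. Hence $u$ is a weak solution of \eqref{davidnonlocal2} satisfying \eqref{GLinftyu} and \eqref{seminormaus}.

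For uniqueness, subtract two solutions $u_1, u_2$ and test with $u_1 - u_2 \in H^s_0(\Omega) \cap L^\infty(\Omega)$ to obtain
$$\frac{1}{2}\iint_{\R^{2n}} \bigl((u_1-u_2)(x) - (u_1-u_2)(z)\bigr)^2 K(x,z)\, dx\, dz + \int_\Omega a\bigl(h(u_1) - h(u_2)\bigr)(u_1 - u_2)\, dx = 0.$$
Both summands are non-negative (the second by the strict monotonicity of $h$ and $a \ge 0$), hence both vanish; together with the lower bound on $K$, the support condition $u_1 - u_2 = 0$ outside $\Omega$, and a standard covering/connectivity argument, this forces $u_1 \equiv u_2$. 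The chief technical difficulty is the nonlocal $L^\infty$-estimate: one must marry the truncation inequality with the sign analysis $f_k \le Q a_k$ on $\{u_k > M\}$, which is precisely why the approximation is engineered to preserve the pointwise domination $|f_k| \le Q a_k$; once this is in place, the remainder is a fairly standard $L^1$-data limit procedure adapted to the nonlocal divergence-form setting.
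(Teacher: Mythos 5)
Your proposal follows essentially the same route as the paper's proof: approximate the $L^1$ data by bounded data that preserve the domination $|f_k|\le Q a_k$, solve the approximating problems by direct minimization of a convex functional, derive a uniform $L^\infty$ bound by a truncation test, derive the energy bound by testing with the solution itself, extract a weak limit, and conclude uniqueness by testing with the difference of two solutions. The only differences are cosmetic: the paper approximates via $f_j := f/(1+|f|/j)$, $a_j:=a/(1+Qa/j)$ whereas you use $a_k:=\min\{a,k\}$, $f_k:=\mathrm{sgn}(f)\min\{|f|,Qa_k\}$ (both preserve the domination and converge in $L^1$); and the paper runs the $L^\infty$ argument in a single step with the signed truncation $G_k(u_j)$ (where $k=h^{-1}(Q)$), exploiting $h(t)G_k(t)=|h(t)||G_k(t)|$, whereas you split it into $(u_k-M)_+$ and $(u_k+M)_-$ via the equivalent pointwise inequality $(u_k(x)-u_k(z))(v(x)-v(z))\ge (v(x)-v(z))^2$. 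One small imprecision: your coercivity step and the conclusion $v\equiv 0$ from the vanishing quadratic form both implicitly require the norm equivalence between $\iint|u(x)-u(z)|^2K(x,z)\,dx\,dz$ and the Gagliardo seminorm $[u]_s^2$, which is nontrivial when $\varrho<+\infty$ (the lower bound on $K$ is only local); the paper isolates this as Lemma~\ref{LEMMAANCVARRO}, and it is worth flagging rather than invoking ``Poincar\'e'' alone.
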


We stress that
the result in Theorem~\ref{main2} is new in the nonlocal setting, even in the special case in which~$K(x,z)$
is, up to multiplicative constants, the homogeneous kernel~$|x-z|^{-n-2s}$ (in this situation,
the operator~\eqref{qsdc:OPK} reduces to the fractional Laplacian).\medskip

To prove Theorem~\ref{main2}, the following result, obtained by a minimization argument,
will be also helpful:

\begin{theorem}\label{thmminimo}
Let~$h:\R\to\R$ satisfy~\eqref{ginftydef}.
Let~$\eta\in L^\infty(\Omega)$ with~$\eta\ge 0$ a.e. in~$\Omega$, and~$\zeta\in L^2(\Omega)$.

Then, there exists a weak solution~$u\in H^s_0(\Omega)$ of
\begin{equation}\label{problemaGxuu}
\begin{cases}
\mathcal L_K u + \eta h(u) = \zeta &\mbox{ in } \Omega,\\
u=0 &\mbox{ in } \R^n\setminus\Omega.
\end{cases}
\end{equation}
\end{theorem}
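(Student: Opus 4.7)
The plan is to invoke the direct method of the calculus of variations on the energy functional
$$J(u) := \frac{1}{4}\iint_{\R^n\times\R^n}(u(x)-u(z))^2\, K(x,z)\,dx\,dz + \int_\Omega \eta(x)\, H(u(x))\,dx - \int_\Omega \zeta(x)\, u(x)\,dx,$$
where $H(t):=\int_0^t h(\tau)\,d\tau$. By~\eqref{ginftydef}, $H$ is $C^1$, strictly convex, even, nonnegative, and satisfies $H(0)=H'(0)=0$; in particular $\eta H(u)\ge 0$ a.e.\ in $\Omega$, since $\eta\ge 0$.

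First I will verify the three standard hypotheses of the direct method on $H^s_0(\Omega)$. The upper bound in~\eqref{QrehtgDWFfbg0jolwfdv:Qwfgb} dominates the bilinear form of $J$ by the Gagliardo seminorm, so the quadratic part is well-defined and continuous; moreover $J(0)=0$ gives $\inf J\le 0$. The lower bound in~\eqref{QrehtgDWFfbg0jolwfdv:Qwfgb}, combined with the fractional Poincaré inequality on $H^s_0(\Omega)$, yields $[u]_K^2\gtrsim \|u\|_{H^s_0(\Omega)}^2$, which together with $\eta H(u)\ge 0$ and Cauchy--Schwarz on the linear term produces coercivity. Weak lower semicontinuity holds termwise: the quadratic part is convex and continuous hence wlsc; Rellich compactness in $H^s_0(\Omega)$ makes a minimizing sequence $u_k$ converge a.e.\ along a subsequence, so Fatou applied to the nonnegative integrand $\eta H(u_k)$ yields $\liminf\int\eta H(u_k)\ge\int\eta H(u)$; the linear term is weakly continuous since $\zeta\in L^2(\Omega)$. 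Hence the direct method produces a minimizer $u\in H^s_0(\Omega)$ (unique by strict convexity of $J$).

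To extract the weak formulation of~\eqref{problemaGxuu} from minimality, I perturb $u\mapsto u+tv$ for arbitrary $v\in H^s_0(\Omega)\cap L^\infty(\Omega)$ and compute the one-sided Gateaux derivative at $u$. The bilinear and linear terms are standard; for the nonlinear one, the convexity of $H$ makes $t^{-1}(H(u+tv)-H(u))$ monotone in $t>0$ and converges pointwise to $h(u)v$ as $t\to 0^+$. Splitting $v=v^+-v^-$ and using the $L^\infty$ bound on $v$ to supply an integrable majorant, a monotone/dominated convergence argument gives
$$\lim_{t\to 0^+}\frac{J(u+tv)-J(u)}{t} \;=\; \frac{1}{2}\iint(u(x)-u(z))(v(x)-v(z))\,K(x,z)\,dx\,dz \;+\; \int_\Omega \eta\,h(u)\,v\,dx \;-\; \int_\Omega \zeta\,v\,dx.$$
Minimality forces this quantity to be nonnegative for every admissible $v$, and by replacing $v$ with $-v$ also nonpositive; thus it vanishes, which is exactly the weak formulation of~\eqref{problemaGxuu}.

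The main obstacle I anticipate is the case $\gamma=+\infty$, where $h(u)$ need not be globally integrable, making the justification of the Gateaux derivative of the nonlinear term delicate and forcing the restriction to bounded test functions. Should the direct variational route require reinforcement, the back-up plan is to first truncate $h$ by bounded odd monotone $h_n$, minimize the corresponding $J_n$ (where the Euler--Lagrange is immediate since $H_n$ is Lipschitz), obtain uniform bounds from testing with the minimizer $u_n$ and exploiting $h_n(u_n)u_n\ge 0$ together with the Poincaré-type inequality, and pass to the limit $n\to+\infty$ using weak compactness in $H^s_0(\Omega)$, a.e.\ convergence via Rellich, the continuity of $h$, and equiintegrability of $\eta\,h_n(u_n)$ provided by the De la Vallée Poussin criterion applied to the uniform $L^1$ bound on $\eta\,h_n(u_n)\,u_n$.
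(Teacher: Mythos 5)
Your proposal follows essentially the same route as the paper: the direct method on the functional
\[
J(u)=\tfrac14\iint (u(x)-u(z))^2K(x,z)\,dx\,dz+\int_\Omega\eta\,H(u)-\int_\Omega\zeta\,u,
\]
with coercivity from the lower kernel bound (via the paper's Lemma~\ref{LEMMAANCVARRO}) and a Poincaré/Sobolev inequality, and weak lower semicontinuity from convexity of the quadratic part, Rellich compactness, and Fatou on the nonnegative term $\eta H(u)$. The paper carries these out in Lemmata~\ref{q0wdfujgobn0ytuik}, \ref{Lemmawlsc}, \ref{lemmacoercivo}, though the paper is careful to set $J=+\infty$ when $\eta(H\circ u)\notin L^1(\Omega)$ so that $J$ is well defined as a map into $\R\cup\{+\infty\}$; your version implicitly achieves the same thing because $\eta H(u)\ge 0$.

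Where you differ from the paper is the passage from minimizer to weak solution. The paper dispatches this in one sentence (\eqref{ufeiwqcgfwr8743y8hgh} is stated as an ``observation'' with no justification), whereas you try to establish it by computing the one-sided Gateaux derivative. You correctly flag that this is delicate when $\gamma=+\infty$, but the specific argument you offer does not fully close the gap: convexity does make $t\mapsto t^{-1}(H(u+tv)-H(u))$ monotone, but the integrable majorant you invoke (``using the $L^\infty$ bound on $v$'') is not actually available. Indeed, for $t\in(0,1]$ and $v\ge0$ one has $t^{-1}(H(u+tv)-H(u))\le H(u+v)-H(u)$, and controlling $\eta\,H(u+v)$ by $\eta\,H(u)\in L^1(\Omega)$ requires a $\Delta_2$-type growth condition on $H$ that \eqref{ginftydef} does not provide (think $h(t)=t^3$, the very example in the paper's Remark after the proof of Theorem~\ref{main2}, or $h$ exponential). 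Similarly, the candidate lower bound $h(u)v$ need not be integrable a priori, because $\eta\,H(u)\in L^1$ does not imply $\eta\,h(u)\in L^1$. So the monotone/dominated convergence step as written does not go through. A workable fix is to test with the one-sided variation $u-tG_m(u)$ (which strictly decreases $|u|$ pointwise, giving $\eta H(u-tG_m(u))\le\eta H(u)\in L^1$), deduce $\eta\,h(u)\,G_m(u)\in L^1$ and then, by monotone convergence in $m$, $\eta\,h(u)u\in L^1$ (hence $\eta\,h(u)\in L^1$); only afterwards attempt the two-sided derivative. Alternatively, your backup plan --- truncating $h$ by bounded odd increasing $h_n$, obtaining uniform energy bounds, and passing to the limit using equi-integrability from the $L^1$ control of $\eta\,h_n(u_n)u_n$ --- is the cleaner and fully rigorous route, and I would promote it from backup to primary argument.
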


\subsection{Back to the classical case (from~$M$ to~$A$)} The nonlocal result obtained in Theorem~\ref{main2}
is also useful to shed new light on the classical case, recovering
it in the limit. This limit procedure is not standard and will require,
on the one hand, the selection of ``the most natural'' fractional operator
that recovers a given second-order elliptic operator in divergence form,
and, on the other hand, a number of ad-hoc inequalities allowing one to
implement this asymptotic technique.
In this sense, we believe that the methodology introduced here can be
advantageous in several situations, not only to recover known results
in partial differential equations as a limit case of nonlocal equations (which
has also the benefit of providing a unified scenario comprising both classical
and fractional problems), but also potentially to obtain new results in 
partial differential equations by using techniques coming from the fractional world.
This approach has been used, for instance,  in the regularity theory ``by approximation" performed in~\cite{MR2781586}, in the asymptotic regularity theories of nonlocal minimal surfaces carried out in~\cite{MR3107529, MR4116635, 2023arXiv230806328C},
as well as in the construction of nonlocal catenoids in~\cite{MR3798717}.
\medskip

To implement our strategy, we recall that second-order operators in divergence form are typically written in the form
\begin{equation}\label{classicaldivergenceform}
-\operatorname{div}\big(A(x)\nabla u(x)\big)=
- \sum_{i, j=1}^n \frac{\partial}{\partial x_i}\left(A_{ij}(x) \frac{\partial u}{\partial x_j}(x) \right),
\end{equation}
where\footnote{As customary, the entries of a given matrix~$L$
will be denoted by~$L_{ij}$.} $A:\R^n\to\R^{n\times n}$ is 
symmetric, uniformly bounded and uniformly positive definite.
\medskip

A natural question in this context is whether or not it
is possible to introduce a nonlocal operator which recovers the structure of the operator in~\eqref{classicaldivergenceform} through an asymptotic limit procedure
(by focusing on the ``simplest'' possible structure of this nonlocal operator).
This plan of action has been presented in~\cite[Section~3]{MR3967804}, where it was proven that the operator in~\eqref{classicaldivergenceform} can be recovered starting from a ``master equation". 
However,  our approach demands more accuracy on the assumptions needed to make this procedure work, to enable us to offer a unified view of local and nonlocal cases and to recover the local case starting from the nonlocal one through a limit process.

The ``simplest possible'' approach in this framework consists in\footnote{Of course, when~$\varrho=+\infty$,
we have that~$B_\varrho=\R^n$.}
taking~$\varrho\in(0,+\infty]$
and~$M:\R^n\times B_\varrho\to\R^{n\times n}$
and in ``modulating'' a fractional interaction kernel by means of the matrix-valued function~$M$.

More precisely, we choose\begin{equation}\label{kerneldiv}
K(x, z):= \frac{c_{n, s}\,\chi_{[0,\varrho)}\big(|x-z|\big)}{ |M(z, x-z)(x-z)|^{n+2s}},
\end{equation} 
and we suppose that
there exist~$\alpha\ge\beta>0$ such that, for any~$x\in\R^n$, $y\in B_\varrho$ and $\xi\in\R^n$,
\begin{equation}\label{matdefpos}
\beta |\xi|\le |M(x-y, y)\xi| \le  \alpha |\xi|.
\end{equation}
To ensure compatibility with the divergence form structure, we suppose that
\begin{equation}\label{structuralM}
M(x-y, y) = M(x, -y)\quad\mbox{ for any } (x, y)\in\R^n\times B_\varrho.
\end{equation}

This setting
produces a nonlocal operator defined as
\begin{equation}\label{LUoperator}\begin{split}
\mathcal L_{M,\varrho,s} u(x)&:= c_{n, s}\, \mbox{P.V. }\int_{B_\varrho} \frac{u(x) - u(x-y)}{|M(x-y, y) y|^{n+2s}} \,dy\\
&= c_{n, s}\, \mbox{P.V. }\int_{B_\varrho(x)} \frac{u(x) - u(z)}{|M(z, x-z) (x-z)|^{n+2s}} \,dz
.\end{split}\end{equation}
Here and in the rest of this paper,
the constant~$c_{n,s}$ is taken to be
\begin{equation}\label{defcNs}
c_{n,s}:=\frac{2^{2s}\,\Gamma\left(\frac{n+2s}{2}\right)}{\pi^{n/2}\,\Gamma(2-s)}\, s(1-s),
\end{equation}
where~$\Gamma$ denotes the Euler Gamma function. The choice of the constant~$c_{n,s}$ allows the recovering of the limit cases as~$s\searrow0$ and~$s\nearrow1$, see e.g.~\cite{GENTLE}.

In particular, if~$M$ is the identity matrix, the operator in ~\eqref{LUoperator} reduces to the fractional Laplacian. We also observe that the kernel in~\eqref{kerneldiv}
satisfies the structural condition~\eqref{QrehtgDWFfbg0jolwfdv:Qwfgb}, 
thanks to~\eqref{matdefpos}, and the symmetry condition in~\eqref{SyMQWD:K}, thanks to~\eqref{structuralM}.

We recall that the idea of considering affine transformations of nonlocal operators is particularly
fruitful also regarding the theory of fractional operators in nondivergence form and fully nonlinear equations
(compare, for example, our operator~$\mathcal L_{M,\varrho,s}$ with~$L_A$ in~\cite{MR3479063}), but the role of affine transformations
in nonlocal operators with divergence form has not been fully investigated yet.
\medskip

This nonlocal setting recovers the classical case as~$s\nearrow1$. More explicitly, we have that:

\begin{proposition}\label{Proplimito1}
Let~$s\in (0, 1)$ and~$u\in H^1(\R^n)$. Suppose that~$\varrho\in(0,+\infty]$ and~$M:\R^n\times B_\varrho\to\R^{n\times n}$
satisfy~\eqref{matdefpos} and \eqref{structuralM}. If there exists~$r\in(0,\varrho)$ such that
\begin{equation}\label{ASSUNZ:MM} \begin{split}&{\mbox{the map~$\R^n\times B_r\ni(x,
y)\longmapsto M(x,y)$ is continuous,}}\end{split}
\end{equation}
then, for any~$\varphi\in C^\infty_c(\R^n)$,
\begin{equation}\label{limitesto1distr}\begin{split}&
\lim_{s\nearrow 1}\frac{c_{n, s}}{2}\iint_{\R^n\times B_\varrho} \frac{(u(x)-u(x-y))(\varphi(x)-\varphi(x-y))}{|M(x-y, y) y|^{n+2s}} \,dx\, dy\\&\qquad= \sum_{i, j=1}^n \,\int_{\R^n} A_{ij}(x)\frac{\partial u}{\partial x_i}(x) \frac{\partial\varphi}{\partial x_j}(x)\, dx,\end{split}
\end{equation}
where\footnote{As customary, here and in the rest of this paper~$\omega_{n-1}$ denotes the surface area of the unit sphere in~$\R^n$.}
\begin{equation}\label{aijdefn}
A_{ij}(x) := \frac{n}{\omega_{n-1}}\int_{\mathcal S^{n-1}} \frac{\psi_i \psi_j}{|M(x, 0)\,\psi|^{n+2}} d\mathcal H_\psi^{n-1}.
\end{equation}

In addition, denoting by~$A(x)$ the matrix with entries~$A_{ij}(x)$, there exist~$\overline\alpha\ge\overline\beta>0$, depending only on~$\alpha$, $\beta$ and~$n$,
such that, for any~$x$, $\xi\in\R^n$,
\begin{equation*}
\overline\beta |\xi|^2\le A(x)\xi\cdot \xi \le \overline\alpha |\xi|^2.
\end{equation*}
\end{proposition}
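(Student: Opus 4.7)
The plan is to adapt the Bourgain-Brezis-Mironescu asymptotic strategy to the matrix-weighted kernel \eqref{kerneldiv}. Denote by $I_s(u,\varphi)$ the bilinear form on the left-hand side of \eqref{limitesto1distr}. Fix $r$ as in \eqref{ASSUNZ:MM} and split $I_s=I_s^{\mathrm{near}}+I_s^{\mathrm{tail}}$ according to whether $y\in B_r$ or $y\in B_\varrho\setminus B_r$. For the tail, the bound $|M(x-y,y)y|\ge\beta|y|$ from \eqref{matdefpos}, together with the elementary translation-continuity estimate
$$\iint_{\R^n\times\{|y|\ge r\}}\frac{|\varphi(x)-\varphi(x-y)|^2}{|y|^{n+2s}}\,dx\,dy\le \frac{2\,\omega_{n-1}}{s}\,\|\varphi\|_{L^2}^2\,r^{-2s}$$
(and its analogue for $u$), yield by Cauchy-Schwarz a bound $|I_s^{\mathrm{tail}}|\le C(u,\varphi,r,\beta)\,c_{n,s}$; since \eqref{defcNs} carries the factor $s(1-s)$, one has $c_{n,s}\to 0$ as $s\nearrow 1$, so $I_s^{\mathrm{tail}}\to 0$.

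For $I_s^{\mathrm{near}}$, I freeze the matrix at $y=0$: the continuity in \eqref{ASSUNZ:MM} restricted to the compact set $(\mathrm{supp}\,\varphi+B_r)\times\overline{B_r}$ is uniform, producing a modulus $\mu(r)\to 0$ such that
$$\bigl|\,|M(x-y,y)y|^{-n-2s}-|M(x,0)y|^{-n-2s}\,\bigr|\le \mu(r)\,|y|^{-n-2s}$$
on the relevant region. The resulting replacement error is controlled via $u(x)-u(x-y)=\int_0^1\nabla u(x-ty)\cdot y\,dt$ and Minkowski's inequality by
$$C\,c_{n,s}\,\mu(r)\,\|\nabla u\|_{L^2}\|\nabla\varphi\|_{L^\infty}\,\omega_{n-1}\,\frac{r^{2-2s}}{2(1-s)},$$
which is $O(\mu(r))$ uniformly in $s$ close to $1$, because $c_{n,s}/(1-s)$ stays bounded thanks to \eqref{defcNs}.

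The remaining main term
$$J_s(r):=\frac{c_{n,s}}{2}\int_{\R^n}dx\int_{B_r}\frac{(u(x)-u(x-y))(\varphi(x)-\varphi(x-y))}{|M(x,0)y|^{n+2s}}\,dy$$
is evaluated by polar coordinates $y=\rho\psi$, the Taylor expansion $\varphi(x)-\varphi(x-\rho\psi)=\rho\,\psi\cdot\nabla\varphi(x)+O(\rho^2\|\nabla^2\varphi\|_\infty)$, and the integral representation for $u$. The radial piece produces $\int_0^r\rho^{1-2s}\,d\rho=r^{2-2s}/(2(1-s))$, and a direct computation from \eqref{defcNs} using $\omega_{n-1}=2\pi^{n/2}/\Gamma(n/2)$ yields
$$\lim_{s\nearrow 1}\frac{c_{n,s}}{4(1-s)}=\frac{\Gamma((n+2)/2)}{\pi^{n/2}}=\frac{n}{\omega_{n-1}}.$$
Dominated convergence on the spherical integral (whose integrand is bounded uniformly in $s$ by a constant depending only on $\beta$ and $n$), together with a density argument approximating $u\in H^1(\R^n)$ by $C^\infty_c$-functions with uniform Gagliardo-in-$s$ bounds, delivers
$$\lim_{s\nearrow 1}J_s(r)=\int_{\R^n}\sum_{i,j=1}^n A_{ij}(x)\,\partial_i u(x)\,\partial_j\varphi(x)\,dx,$$
with $A_{ij}$ as in \eqref{aijdefn}. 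Since the limit does not depend on $r$, letting $r\to 0$ kills the freezing error and establishes \eqref{limitesto1distr}. The ellipticity of $A$ is immediate from \eqref{matdefpos} and the identity $\int_{\mathcal S^{n-1}}(\xi\cdot\psi)^2\,d\mathcal H^{n-1}_\psi=\omega_{n-1}|\xi|^2/n$, which yield $\overline{\beta}:=\alpha^{-n-2}$ and $\overline{\alpha}:=\beta^{-n-2}$.

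The main technical obstacle, in my view, is the simultaneous handling of the $O(\rho^2)$ Taylor remainder for $\varphi$ and the merely $H^1$ regularity of $u$: the remainder, once divided by $\rho^{n+2s}$ and integrated, survives the limit only thanks to the $(1-s)$ factor in $c_{n,s}$, and the density closure from $C^\infty_c$ to $H^1$ requires a uniform-in-$s$ control of Gagliardo seminorms of the type $\iint|u(x)-u(x-y)|^2|y|^{-n-2s}\,dx\,dy\le C\|\nabla u\|_{L^2}^2 r^{2-2s}/(1-s)$ on $B_r$, together with the already-used $L^2$ translation-continuity on its complement.
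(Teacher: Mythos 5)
Your proposal is correct in substance but takes a genuinely different route from the paper. The paper proves \eqref{limitesto1distr} by first reducing to $\varrho=+\infty$ and $u\in C^\infty_c(\R^n)$ (two successive cutoff/mollification arguments on $u$), then mollifying the matrix $M$ to get a smooth $M_\e$, invoking the asymptotic result of \cite[equation~(3.7) and Appendix~W]{MR3967804} for smooth kernels, and finally removing the mollification in $\e$ by means of the quantitative perturbation estimate Lemma~\ref{ILQFR}. You instead avoid the external citation and the mollification of $M$ entirely: you freeze $M$ at the diagonal point $(x,0)$, control the replacement error via uniform continuity of $M$ on a fixed compact set (effectively reproving the content of Lemma~\ref{ILQFR} in a modulus-of-continuity form), and then compute the limit of the frozen bilinear form directly in polar coordinates using a first-order Taylor expansion of $\varphi$, the integral representation of $u(x)-u(x-\rho\psi)$, and the identity $\lim_{s\nearrow1}c_{n,s}/(4(1-s))=n/\omega_{n-1}$. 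The net effect is a self-contained computation where the paper delegates the core asymptotics to a reference; both pivot on the same two ingredients (a perturbation estimate for the kernel $|L y|^{-n-2s}$ in the matrix variable, and the concentration of the measure $c_{n,s}\rho^{1-2s}\,d\rho$ at $\rho=0$ as $s\nearrow1$), and both must pass from $C^\infty_c$ to $H^1$ by a density argument supported by the uniform estimate $c_{n,s}\iint_{B_r}|u(x)-u(x-y)|^2|y|^{-n-2s}\,dx\,dy\lesssim r^{2-2s}\|\nabla u\|_{L^2}^2$. Minor presentational points: the estimate following Minkowski naturally produces $\|\nabla u\|_{L^1}$ unless you first exploit the compact support of $\varphi$ to localize the $x$-integral (as you implicitly do, since that is what justifies $\|\nabla u\|_{L^2}$ with a constant depending on $\operatorname{supp}\varphi$), and the compact set on which uniform continuity of $M$ is invoked should be taken as $(\operatorname{supp}\varphi+\overline{B_{2r_0}})\times\overline{B_{r_0}}$ for a fixed $r_0<r$, so the modulus $\mu$ does not itself depend on the shrinking parameter. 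Neither affects the validity of the argument. The ellipticity bounds $\overline\beta=\alpha^{-(n+2)}$, $\overline\alpha=\beta^{-(n+2)}$ via $\int_{\mathcal S^{n-1}}(\xi\cdot\psi)^2\,d\mathcal H^{n-1}_\psi=\omega_{n-1}|\xi|^2/n$ are correct.
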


The proof of Proposition~\ref{Proplimito1} will be presented in Appendix~\ref{SEC2}.

The spirit of this kind of asymptotic results is that,
given~$M:\R^n\times B_\varrho\to\R^{n\times n}$, which will be used in a nonlocal problem, one
constructs a matrix-valued function~$A$ as in~\eqref{aijdefn} corresponding to a classical second-order equation. 
This is useful when considering sequences of solutions~$u_s$ of fractional equations 
related to the operator~$\mathcal L_{M,\varrho,s}$ in order to
determine its limit as~$s\nearrow1$. In our setting,
in particular, this strategy leads to the following result:

\begin{theorem}\label{main3}
Let~$\varrho\in(0,+\infty]$ and~$M:\R^n\times B_\varrho\to\R^{n\times n}$ be a Lipschitz continuous matrix-valued function such that~\eqref{matdefpos} and 
\eqref{structuralM} 
hold true. 

Let~$h:\R\to\R$ satisfy~\eqref{ginftydef}. 
Let~$a$, $f\in L^1(\Omega)$ be such that~\eqref{GQcond} is satisfied.

Given~$s\in (0, 1)$,
let~$u_s\in H^s_0(\Omega)\cap L^\infty(\Omega)$ be the unique weak solution of
\begin{equation}\label{davidnonlocal2MS}
\begin{cases}
\mathcal L_{M,\varrho,s} u + a h(u) = f &\mbox{ in } \Omega,\\
u=0 &\mbox{ in } \R^n\setminus\Omega,
\end{cases}
\end{equation}
provided by Theorem~\ref{main2}.

Then, there exists a unique weak solution~$u_1\in H_0^1(\Omega)\cap L^\infty(\Omega)$ of
\begin{equation}\label{problemalocale}
\begin{cases}
-\operatorname{div}(A\nabla u) + a h(u) = f&\mbox{ in } \Omega,\\
u=0 &\mbox{ on } \partial\Omega,
\end{cases}
\end{equation}
where~$A$ is as in~\eqref{aijdefn},
and, for a.e.~$x\in\R^n$,
\[
\lim_{s\nearrow 1} u_s(x) = u_1(x).
\]

In addition, we have that
\begin{equation}\label{GLinftyuBIS}\begin{split}
&\|u_1\|_{L^\infty(\Omega)}\le h^{-1}(Q)\\ 
\mbox{ and }\quad & \|\nabla u_1\|_{L^2(\Omega)}^2
\le C\,h^{-1}(Q) \,\big(\|f\|_{L^1(\Omega)} + Q \|a\|_{L^1(\Omega)}\big) ,
\end{split}\end{equation}
where~$C>0$ depends only on~$n$, $\alpha$, $\beta$ and~$\Omega$.
\end{theorem}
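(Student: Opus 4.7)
The plan is to extract a subsequential limit of $\{u_s\}_s$, identify it as a weak solution of \eqref{problemalocale} via a version of Proposition~\ref{Proplimito1} tailored to a varying first argument, and then promote subsequential to full convergence via uniqueness of the local problem.

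First, I would collect two uniform-in-$s$ bounds from Theorem~\ref{main2}. The $L^\infty$ bound $\|u_s\|_{L^\infty(\Omega)}\le h^{-1}(Q)$ is already independent of $s$. Combining \eqref{seminormaus} with the lower bound on the kernel $K(x,z)\ge c_{n,s}\alpha^{-n-2s}|x-z|^{-n-2s}\chi_{[0,\varrho)}(|x-z|)$ coming from \eqref{matdefpos}, one gets
\[
c_{n,s}\iint_{\R^n\times\R^n}\frac{(u_s(x)-u_s(z))^2}{|x-z|^{n+2s}}\,\chi_{[0,\varrho)}(|x-z|)\,dx\,dz\le C_0,
\]
with $C_0$ independent of~$s$. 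Since $c_{n,s}$ is comparable to $s(1-s)$ up to a dimensional factor, this is precisely the Bourgain--Brezis--Mironescu (BBM) regime. Combined with the uniform compact support of $u_s$ in~$\overline\Omega$, BBM compactness produces a subsequence $s_k\nearrow 1$ and $u_1\in H^1_0(\Omega)$ with $u_{s_k}\to u_1$ strongly in $L^2(\R^n)$ and pointwise a.e.\ in $\R^n$, together with the liminf inequality yielding the $H^1$ bound in~\eqref{GLinftyuBIS}; the $L^\infty$ bound in~\eqref{GLinftyuBIS} is inherited by pointwise limit.

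The principal difficulty is the passage to the limit in the bilinear part of the weak formulation of~\eqref{davidnonlocal2MS}, since Proposition~\ref{Proplimito1} treats the limit only when the first argument is \emph{fixed} in $H^1(\R^n)$, whereas here both arguments vary with~$s$. I would handle this by a transfer argument. Using the symmetry~\eqref{structuralM} together with the almost-symmetry condition~\eqref{COND:qsdfdv} (satisfied here thanks to the Lipschitz continuity of~$M$, by a direct computation), the bilinear form tested against $\varphi\in C^\infty_c(\Omega)$ can be rewritten, up to a remainder vanishing as $s\nearrow 1$, as $\int_\Omega u_{s_k}\,\mathcal L_{M,\varrho,s_k}\varphi\,dx$. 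For a fixed smooth~$\varphi$, a Taylor expansion of $\varphi(x)-\varphi(x-y)$ up to second order, polar-coordinate integration, continuity of $M(\cdot,0)$ and the normalization~\eqref{defcNs} (chosen precisely to make the $s\nearrow 1$ asymptotics clean) show that $\mathcal L_{M,\varrho,s}\varphi\to-\operatorname{div}(A\nabla\varphi)$ uniformly on compact sets, with $A$ as in~\eqref{aijdefn}. Combined with the strong $L^2$ convergence $u_{s_k}\to u_1$ and an integration by parts in the local limit using $u_1\in H^1_0(\Omega)$, this produces the target limit $\sum_{i,j}\int_\Omega A_{ij}\,\partial_i u_1\,\partial_j\varphi\,dx$. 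The zero-order term passes by dominated convergence since $h(u_{s_k})\to h(u_1)$ a.e.\ with $|a\,h(u_{s_k})|\le Q|a|\in L^1(\Omega)$, and the right-hand side is independent of~$s$; hence $u_1$ is a weak solution of~\eqref{problemalocale}.

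Finally, uniqueness of weak solutions of~\eqref{problemalocale} in $H^1_0(\Omega)\cap L^\infty(\Omega)$ follows from a standard subtraction-and-testing argument: two solutions $u_1,\tilde u_1$ satisfy a homogeneous difference equation with the same coercive linear operator, and testing by $u_1-\tilde u_1\in H^1_0(\Omega)\cap L^\infty(\Omega)$ yields, via uniform ellipticity of~$A$ and the strict monotonicity of~$h$ from~\eqref{ginftydef} (which makes the nonlinear contribution nonnegative), that $u_1=\tilde u_1$. Therefore $u_1$ does not depend on the extracted subsequence, so the whole family $u_s$ converges to~$u_1$ a.e.\ in~$\R^n$ as $s\nearrow 1$.
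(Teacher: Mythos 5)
Your overall architecture — uniform bounds from Theorem~\ref{main2}, extraction of a subsequential limit, identification with the unique local solution, and upgrade to full convergence via uniqueness — matches the paper. Your compactness step is a legitimate alternative: you invoke Bourgain--Brezis--Mironescu compactness directly, whereas the paper first obtains a uniform $H^{\bar s}_0(\Omega)$ bound (via~\cite[Lemma~2.1]{MR4736013}) to extract a weak limit~$w$, and then promotes~$w$ to~$H^1_0(\Omega)$ by a mollification argument based on Lemma~\ref{lemmasemimoll}. Both work; yours is shorter, the paper's is more self-contained.

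The substantive divergence — and where a genuine gap opens — is in the passage to the limit of the bilinear form. After rewriting it as $\int_\Omega u_{s_k}\,\mathcal L_{M,\varrho,s_k}\varphi\,dx$, you assert that $\mathcal L_{M,\varrho,s}\varphi\to -\operatorname{div}(A\nabla\varphi)$ \emph{uniformly on compacta}, ``by Taylor expansion, polar-coordinate integration, and continuity of $M(\cdot,0)$.'' This is not a routine consequence of the ingredients you list, and it is strictly stronger than anything the paper proves (Proposition~\ref{Proplimito1} is a purely distributional statement). The second-order Taylor term accounts only for the principal part $-\sum_{i,j}A_{ij}\,\partial_i\partial_j\varphi$; the drift $-\sum_{i,j}(\partial_iA_{ij})\,\partial_j\varphi$ hidden in $-\operatorname{div}(A\nabla\varphi)$ must come from the coupling between the \emph{first-order} Taylor term $-\nabla\varphi(x)\cdot y$ and the $y$-variation of the kernel $|M(x-y,y)\,y|^{-n-2s}$, and extracting the right coefficient there hinges essentially on the structural condition~\eqref{structuralM} — continuity of $M(\cdot,0)$ alone cannot see it. Worse, for merely Lipschitz $M$ the limit $-\operatorname{div}(A\nabla\varphi)$ involves $\partial_iA_{ij}\in L^\infty$, which need not be continuous, so ``uniform convergence on compact sets'' to that target is not even the right statement.

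The paper avoids this entirely: Lemma~\ref{apkdjoscvndfSHDKB<qrg4596uyh43io} and Corollary~\ref{PSjmndwqoeutr98n76bv9m0v04b8n9m-39tee45ndsh76y} are used only to get a \emph{uniform-in-$s$ bound} $|\mathcal L_{M,\varrho,s}\varphi|\le C_M\|\varphi\|_{C^2}$; one then splits $\int u_{s_{j_k}}\mathcal L_{M,\varrho,s_{j_k}}\varphi = \int(u_{s_{j_k}}-w)\mathcal L_{M,\varrho,s_{j_k}}\varphi + \int w\,\mathcal L_{M,\varrho,s_{j_k}}\varphi$, kills the first piece by dominated convergence (a.e.\ convergence plus the uniform $L^1$-dominator from~\eqref{QUELLA}), and for the second piece — now with the \emph{fixed} function $w\in H^1(\R^n)$ — rewrites it as the symmetric bilinear form and invokes Proposition~\ref{Proplimito1}. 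That proposition is proved in Appendix~\ref{SEC2} and is precisely the content you are attempting to replace with an elementary Taylor computation. If you wish to keep your route, you must actually establish the pointwise/uniform limit of $\mathcal L_{M,\varrho,s}\varphi$ including the drift contribution (which demands at least continuity of $\nabla_x M(\cdot,0)$, not merely Lipschitz $M$); otherwise, adopt the transfer-to-$w$ device and lean on the distributional limit already supplied by Proposition~\ref{Proplimito1}.
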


The stability property showcased in Theorem~\ref{main3} is important since
it highlights a series of uniform estimates which go beyond the standard regularity theory
and presents a ``unified'' vision in which the classical case is actually
obtained as a byproduct of a nonlocal construction.

We observe that the existence of a classical solution~$u_1$ for the second-order equation~\eqref{problemalocale} was
established in~\cite[Theorem~2.1]{MR4145478}, working in a local framework. However, in Theorem~\ref{main3}
we deduce the existence of this solution as limit of solutions of nonlocal problems.
Indeed, the concrete applicability of Theorem~\ref{main3} leverages Theorem~\ref{main2}
to ensure the existence of solutions for all nonlocal problems,
then it relies on some tailor-made estimates which are independent of the parameter~$s\in (0, 1)$
in order to pass these solutions to the limit as~$s\nearrow1$, finally concluding that the limit function
obtained in this manner is actually a solution of the classical problem (we will see in the forthcoming
Theorem~\ref{fuweioifhqweio7564} that this strategy can also be adopted to construct a solution of any classical problem in divergence form).

\subsection{Back to the classical case (from~$A$ to~$M$)}
Interestingly, one can also modify the above strategy
in order to provide an independent proof 
of~\cite[Theorem~2.1]{MR4145478} by relying on a limit procedure.
This requires significant additional work, since,
while Proposition~\ref{Proplimito1} describes the limit matrix-valued function~$A$
in terms of a given matrix-valued function~$M$ used in a sequence of fractional problems,
to recover the classical case by a limit method one needs
some ``reverse engineering'': namely, given any elliptic matrix-valued function~$A$
one needs to construct a matrix-valued function~$M$ to be used in the sequence of fractional problems.
In a sense, one needs to ``invert'' the relation in~\eqref{aijdefn}
and, as detailed in the forthcoming equation~\eqref{0wqurj3o-4uy4ijp30to5Xmm0vb5vn},
such inversion is not unique (however, the explicit inversion formula
that we find allows for ``the simplest possible choice'' of~$M$
to recover~$A$).

To clarify the technicalities involved in this construction, let us first
present a result in linear algebra\footnote{As customary, given a matrix~$N\in\R^{n\times n}$, we denote by~$N^T$ the transpose matrix.
Also, $\delta_{ij}$ denotes the Kronecker symbol.} permitting to invert~\eqref{aijdefn}
at a given point:

\begin{lemma}\label{lemma3}
Let~$\lambda_1, \dots, \lambda_n>0$ and~$A\in\R^{n\times n}$ be defined as
\begin{equation}\label{AOLAMBDAOT}
A:=O\Lambda O^T,
\end{equation}
where~$O$, $\Lambda\in\R^{n\times n}$ satisfy
\begin{equation}\label{AOLAMBDAOT2}
OO^T = \operatorname{Id}\quad\mbox{ and }\quad \Lambda_{ij}:= \lambda_i\delta_{ij}.
\end{equation}
Let~$\overline\lambda:=\lambda_1\cdots\lambda_n$
and, for any~$i\in\{1,\dots, n\}$,
\begin{equation}\label{sigmai2}
\sigma_i:=\sqrt{\frac{\overline\lambda^{\frac{1}{n+2}}}{\lambda_i}}.
\end{equation}
Let~$\Sigma\in\R^{n\times n}$ with entries~$\sigma_i\delta_{ij}$ and set
\begin{equation}\label{MOSIGMAOT}
N_A:= O\Sigma O^T.
\end{equation}

Then, 
\begin{equation}\label{20ef-wdfuvjb9ogrbtg}
A_{ij} = \dfrac{n}{\omega_{n-1}}\int_{\mathcal S^{n-1}} \frac{\psi_i \,\psi_j}{|N_A\, \psi|^{n+2}}\, d\mathcal H^{n-1}_\psi.
\end{equation}

\end{lemma}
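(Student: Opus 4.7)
The plan is to reduce the claimed identity to the following general linear-algebraic fact: for any symmetric positive definite matrix $N\in\R^{n\times n}$,
\[
\frac{n}{\omega_{n-1}}\int_{\mathcal S^{n-1}}\frac{\psi_i\,\psi_j}{|N\psi|^{n+2}}\,d\mathcal H^{n-1}_\psi \;=\; \frac{(N^{-2})_{ij}}{\det N}.
\]
Once this is established, applying it to $N=N_A=O\Sigma O^T$ reduces the proof to a short algebraic verification using the explicit choice of the $\sigma_i$ in \eqref{sigmai2}.

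To prove the general identity, I would exploit the degree $-n$ homogeneity of $y\mapsto y_iy_j/|Ny|^{n+2}$. Integrating in polar coordinates over the annulus $\{1<|y|<2\}$ yields
\[
\int_{\{1<|y|<2\}}\frac{y_i\,y_j}{|Ny|^{n+2}}\,dy \;=\; \ln 2\,\int_{\mathcal S^{n-1}}\frac{\psi_i\,\psi_j}{|N\psi|^{n+2}}\,d\mathcal H^{n-1}_\psi.
\]
Next, I would perform the linear change of variables $z=Ny$ (Jacobian $1/\det N$), then pass back to polar coordinates $z=r\omega$. Since $|N^{-1}z|=r\,|N^{-1}\omega|$, the new radial integration also produces a factor $\ln 2$, which cancels the one above and leaves
\[
\int_{\mathcal S^{n-1}}\frac{\psi_i\,\psi_j}{|N\psi|^{n+2}}\,d\mathcal H^{n-1}_\psi \;=\; \frac{1}{\det N}\int_{\mathcal S^{n-1}}(N^{-1}\omega)_i\,(N^{-1}\omega)_j\,d\mathcal H^{n-1}_\omega.
\]
The remaining spherical integral is evaluated via the classical identity $\int_{\mathcal S^{n-1}}\omega_k\omega_l\,d\mathcal H^{n-1}_\omega=\frac{\omega_{n-1}}{n}\delta_{kl}$ together with the symmetry of $N^{-1}$, which gives $\frac{\omega_{n-1}}{n}(N^{-2})_{ij}$ and completes the general identity.

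To finish, I specialize to $N=N_A$. Since $N_A=O\Sigma O^T$ with $\Sigma$ diagonal, one has $N_A^{-2}=O\Sigma^{-2}O^T$ and $\det N_A=\sigma_1\cdots\sigma_n$. From \eqref{sigmai2}, $\sigma_i^2=\overline\lambda^{1/(n+2)}/\lambda_i$, so $\sigma_1^2\cdots\sigma_n^2=\overline\lambda^{n/(n+2)}/\overline\lambda=\overline\lambda^{-2/(n+2)}$, giving $\det N_A=\overline\lambda^{-1/(n+2)}$ and hence $\sigma_i^2\det N_A=1/\lambda_i$. Consequently $\Sigma^{-2}/\det N_A$ is the diagonal matrix $\Lambda$, so $N_A^{-2}/\det N_A=O\Lambda O^T=A$, which combined with the identity from the previous paragraph proves \eqref{20ef-wdfuvjb9ogrbtg}. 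The only step requiring any real care is the bookkeeping in the double change of variables (annulus $\to$ image annulus $\to$ polar coordinates), which needs the Jacobian and the inner substitution to be tracked precisely; the concluding algebraic verification that $\sigma_i^{-2}/\det N_A=\lambda_i$ is straightforward once the exponent in \eqref{sigmai2} is recognized as arranged exactly to balance $\det N_A$.
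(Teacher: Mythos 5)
Your proposal is correct, and it takes a genuinely different route from the paper. The paper builds \eqref{20ef-wdfuvjb9ogrbtg} in three stages: Lemma~\ref{lemma1} evaluates the spherical integral for a diagonal kernel $\left(\sum_k\sigma_k^2\varphi_k^2\right)^{-(n+2)/2}$ by a recursive spherical-coordinate computation that peels off one angle at a time using the Gradshteyn--Ryzhik formula~\eqref{4.5.gen}; Lemma~\ref{lemma2} shows that the normalization $\sigma_i^2=\overline\lambda^{1/(n+2)}/\lambda_i$ produces exactly $\lambda_i\delta_{ij}$; and the proof of Lemma~\ref{lemma3} then handles the general symmetric matrix by the orthogonal substitution $\psi=O\varphi$. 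You replace the first two of these stages with the coordinate-free identity
$$\frac{n}{\omega_{n-1}}\int_{\mathcal S^{n-1}}\frac{\psi_i\psi_j}{|N\psi|^{n+2}}\,d\mathcal H^{n-1}_\psi=\frac{(N^{-2})_{ij}}{\det N},$$
valid for any symmetric positive definite $N$, which you obtain from the degree-$(-n)$ homogeneity of $y\mapsto y_iy_j/|Ny|^{n+2}$: integrating over the annulus $\{1<|y|<2\}$ gives one $\ln 2$; the substitution $z=Ny$ maps to the ellipsoidal annulus $\{1<|N^{-1}z|<2\}$, contributes the Jacobian $1/\det N$, and the radial integral over $|N^{-1}\omega|^{-1}<r<2|N^{-1}\omega|^{-1}$ gives back $\ln 2$; the remaining spherical average of $(N^{-1}\omega)_i(N^{-1}\omega)_j$ is evaluated via $\int_{\mathcal S^{n-1}}\omega_k\omega_l\,d\mathcal H^{n-1}_\omega=\frac{\omega_{n-1}}{n}\delta_{kl}$ and symmetry of $N^{-1}$. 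Specializing to $N_A=O\Sigma O^T$, the bookkeeping $\det N_A=\sigma_1\cdots\sigma_n=\overline\lambda^{-1/(n+2)}$ and $\sigma_i^{-2}/\det N_A=\lambda_i$ gives $N_A^{-2}/\det N_A=O\Lambda O^T=A$, and the identity follows. Your route is more elementary and sidesteps the explicit spherical-coordinate recursion entirely; the paper's route, while more computational, has the side benefit of producing the concrete value of the intermediate integral (the hyperellipsoid volume of Lemma~\ref{lemma1}), which the authors may have wanted on record. Both handle the orthogonal conjugation the same way in the end, but your general identity absorbs that step automatically because it holds for arbitrary symmetric positive definite $N$.
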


We recall that, by the Spectral Theorem (see e.g.~\cite[Theorem~5S, page~330]{STRANG4}), conditions~\eqref{AOLAMBDAOT} and~\eqref{AOLAMBDAOT2} just make it explicit that the matrix~$A$ is symmetric
with eigenvalues~$\lambda_1,\dots,\lambda_n$. Furthermore, the positivity of these eigenvalues is equivalent to the
positive definiteness of~$A$.

The inversion provided by Lemma~\ref{lemma3} needs to
be adapted to comprise the interaction setting appearing in the fractional case:
roughly speaking, while in the classical problem~\eqref{problemalocale} the matrix-valued function~$A$
depends only on the point~$x$, its nonlocal counterpart makes use
of the setting in~\eqref{LUoperator} and the corresponding matrix-valued function~$M$ depends on both~$x$ and~$y$
(or both~$x$ and~$z$, depending on the preferred form in which one can write the operator).
This is done according to the following result:

\begin{theorem}\label{THMFROMATOM}
Suppose that~$A:\R^n\to\R^{n\times n}$ is a continuous matrix-valued function.
Suppose also that, for all~$x\in\R^n$, the matrix~$A(x)$ is symmetric
and denote by~$\lambda_i(x)$ its eigenvalues.

Assume that there exist~$\undertilde\lambda$, $\widetilde\lambda>0$ such that, for any~$i\in\{1,\dots, n\}$ and for any~$x\in\R^n$, 
\begin{equation}\label{tildelambdaMAIN}
\undertilde\lambda\le\lambda_i(x)\le \widetilde\lambda.
\end{equation}

Then, there exists $M:\R^n\times\R^n\to\R^{n\times n}$
satisfying~\eqref{matdefpos}, \eqref{structuralM} and~\eqref{aijdefn} (with~$\varrho=+\infty$).

The bounds in~\eqref{matdefpos} depend only on~$n$, $\undertilde\lambda$ and~$\widetilde\lambda$.

The explicit expression of~$M$ has the form
\begin{equation}\label{0wqurj3o-4uy4ijp30to5Xmm0vb5vn}
M(x, y):= \dfrac{N_{A(x)}+N_{A(x+y)}}{2} + H(y), 
\end{equation}
where the notation in~\eqref{MOSIGMAOT} has been used,
and~$H:\R^n\to\R^{n\times n}$ is any bounded and continuous matrix-valued function,
with~$H(y)$ positive semidefinite uniformly in~$y$, satisfying
\begin{equation}\label{ipotesiP:NEL}
H(0)=0\quad\mbox{ and }\quad H(y)=H(-y) \ \mbox{ for any~$y\in\R^n$.}
\end{equation}
\end{theorem}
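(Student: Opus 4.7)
The plan is to construct $M$ essentially by the explicit formula \eqref{0wqurj3o-4uy4ijp30to5Xmm0vb5vn} and then to verify the three required properties one at a time, leveraging Lemma~\ref{lemma3} as a pointwise ingredient.

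First, at each fixed~$x\in\R^n$, the hypothesis~\eqref{tildelambdaMAIN} guarantees that $A(x)$ is symmetric and positive definite with eigenvalues in~$[\undertilde\lambda,\widetilde\lambda]$, so Lemma~\ref{lemma3} applies and produces the matrix~$N_{A(x)}$ as in~\eqref{MOSIGMAOT}. A convenient remark is that one can rewrite this matrix intrinsically as $N_{A(x)}=(\det A(x))^{1/(2(n+2))}\,A(x)^{-1/2}$, a continuous function of~$A(x)$, hence of~$x$. This establishes the continuity of $x\mapsto N_{A(x)}$ and, together with the continuity of~$H$, the continuity of~$M$ defined via~\eqref{0wqurj3o-4uy4ijp30to5Xmm0vb5vn}. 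Moreover, since~$\overline\lambda=\det A(x)$ and each $\sigma_i(x)=\sqrt{\overline\lambda^{1/(n+2)}/\lambda_i(x)}$ is controlled above and below only in terms of $n$, $\undertilde\lambda$ and $\widetilde\lambda$, one obtains uniform bounds $\undertilde\sigma\,|\xi|\le |N_{A(x)}\xi|\le\widetilde\sigma\,|\xi|$ with constants independent of~$x$.

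The verification of~\eqref{aijdefn} is then immediate: since $H(0)=0$ by~\eqref{ipotesiP:NEL}, the formula~\eqref{0wqurj3o-4uy4ijp30to5Xmm0vb5vn} evaluated at $y=0$ gives $M(x,0)=N_{A(x)}$, and so~\eqref{aijdefn} is precisely the representation~\eqref{20ef-wdfuvjb9ogrbtg} from Lemma~\ref{lemma3} applied to~$A(x)$. The symmetry condition~\eqref{structuralM} is also a direct check: one computes
\[
M(x-y,y)=\frac{N_{A(x-y)}+N_{A(x)}}{2}+H(y),\qquad
M(x,-y)=\frac{N_{A(x)}+N_{A(x-y)}}{2}+H(-y),
\]
and these coincide thanks to the parity condition $H(y)=H(-y)$.

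The main technical point, though elementary, is~\eqref{matdefpos}. Here I would argue that $M(x-y,y)$ is symmetric positive definite, because it is the sum of the symmetric positive definite matrix $\frac{1}{2}(N_{A(x-y)}+N_{A(x)})$ and the symmetric positive semidefinite matrix $H(y)$. For the lower bound, the quadratic form $\xi\mapsto \xi^{T}M(x-y,y)\xi$ is bounded below by $\xi^{T}\tfrac12(N_{A(x-y)}+N_{A(x)})\xi\ge \undertilde\sigma\,|\xi|^{2}$, uniformly in $x,y$. For the upper bound, one combines the uniform bound $|N_{A(\cdot)}\xi|\le\widetilde\sigma\,|\xi|$ with the assumed boundedness $\|H(y)\|\le C_{H}$ and uses the triangle inequality to obtain $|M(x-y,y)\xi|\le(\widetilde\sigma+C_{H})\,|\xi|$. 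Hence~\eqref{matdefpos} holds with $\beta=\undertilde\sigma$ and $\alpha=\widetilde\sigma+C_{H}$, depending only on $n$, $\undertilde\lambda$ and $\widetilde\lambda$ (plus the fixed norm of~$H$); the hard part is really just tracking this dependence carefully. This completes all three requirements and establishes the theorem with $\varrho=+\infty$.
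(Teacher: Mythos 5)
Your proof is correct and reaches the same conclusion as the paper's, but a couple of your ingredients are genuinely different and arguably cleaner. The paper establishes continuity of~$x\mapsto N_{A(x)}$ indirectly: it diagonalizes $A(x)=O(x)\Lambda(x)O^T(x)$ and invokes Lemmata~\ref{WERFV} and~\ref{lemmafeye542yt7jksdjklinear} (Lipschitz continuity of eigenvalues and eigenprojections) to argue that $\Lambda$, $O$, hence~$\Sigma$ and~$N_{A(x)}$, vary continuously. Your observation that
\[
N_{A(x)}=(\det A(x))^{\frac{1}{2(n+2)}}\,A(x)^{-1/2}
\]
sidesteps this entirely: it shows at once that $N_{A(x)}$ is well-defined independently of the chosen eigendecomposition and that it is a continuous function of~$A(x)$ on the cone of symmetric positive definite matrices, without appealing to any eigenvector-selection lemma. (Indeed $\overline\lambda=\det A$ and $\Sigma=(\det A)^{1/(2(n+2))}\Lambda^{-1/2}$, so $O\Sigma O^T=(\det A)^{1/(2(n+2))}O\Lambda^{-1/2}O^T=(\det A)^{1/(2(n+2))}A^{-1/2}$; this is exactly the content of the paper's~\eqref{MOSIGMAOT} and~\eqref{sigmai2}.) Similarly, for~\eqref{matdefpos} the paper routes through Lemma~\ref{lemmajdncbJò} and the compactness argument of Lemma~\ref{propnormeequiv00}, whereas you get the lower bound on $|M(x-y,y)\xi|$ from the quadratic-form estimate via Cauchy--Schwarz (you should perhaps make that one line explicit: from $\xi^T M\xi\ge\undertilde\sigma|\xi|^2$ one gets $|M\xi|\,|\xi|\ge\xi^TM\xi\ge\undertilde\sigma|\xi|^2$) and the upper bound by the triangle inequality, yielding explicit constants rather than implicit compactness ones. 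Your remark that the upper constant~$\alpha$ also depends on the fixed bound~$C_H$ for~$H$ is correct and is implicit in the paper's Lemma~\ref{lemmajdncbJò}. The verification of~\eqref{structuralM} and~\eqref{aijdefn} is identical in both treatments, being a direct computation plus Lemma~\ref{lemma3}.
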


We point out that condition~\eqref{tildelambdaMAIN} is equivalent to assume that~$A(x)$ is bounded and strictly positive definite, uniformly in~$x$. We have however
stated this condition explicitly to clarify the dependence of the structural constants on the quantities~$\undertilde\lambda$ and~$ \widetilde\lambda$.

The representation formula~\eqref{0wqurj3o-4uy4ijp30to5Xmm0vb5vn} is interesting,
since, given an elliptic matrix~$A$ for the second-order problem in divergence form,
it selects the ``simplest'' possible nonlocal operator in divergence form
that recovers the classical case in the limit: in particular, the choice~$H:=0$ in~\eqref{0wqurj3o-4uy4ijp30to5Xmm0vb5vn}
provides a ``natural'' one-to-one correspondence between classical and nonlocal problems
(the nonlocal case being ``more abundant'', formula~\eqref{0wqurj3o-4uy4ijp30to5Xmm0vb5vn}
allows for a convenient choice).\medskip

The construction in Theorem~\ref{THMFROMATOM}, together with
the estimates for the solutions found through Theorem~\ref{main2} and the convergence result in Proposition~\ref{Proplimito1},
allows one
to recover classical results
established in~\cite[Theorem~2.1]{MR4145478} for the local problem with a limit procedure, according to the following result:

\begin{theorem}\label{fuweioifhqweio7564}
Let~$A:\Omega\to\R^{n\times n}$. Suppose that~$A(x)$ is a symmetric matrix
and denote by~$\lambda_i(x)$ its eigenvalues.

Assume that there exist~$\undertilde\lambda$, $\widetilde\lambda>0$ such that, for any~$i\in\{1,\dots, n\}$ and for any~$x\in\Omega$, 
\begin{equation*}
\undertilde\lambda\le\lambda_i(x)\le \widetilde\lambda.
\end{equation*}

Let~$h:\R\to\R$ satisfy~\eqref{ginftydef}. 
Let~$a$, $f\in L^1(\Omega)$ be such that~\eqref{GQcond} is satisfied.

Then, there exists a unique weak solution~$u_1\in H_0^1(\Omega)\cap L^\infty(\Omega)$ of~\eqref{problemalocale}
satisfying
\begin{eqnarray*}&& \|u_1\|_{L^\infty(\Omega)}\le h^{-1}(Q)\\
{\mbox{and }} &&\|\nabla u_1\|_{L^2(\Omega)}\le C\,h^{-1}(Q) \,\big(\|f\|_{L^1(\Omega)} + Q \|a\|_{L^1(\Omega)}\big)
.\end{eqnarray*}

Moreover, there 
exists a sequence~$M_\ell:\R^n\times\R^n\to\R^{n\times n}$,
satisfying~\eqref{matdefpos} and~\eqref{structuralM} (for any $\varrho\in (0, +\infty]$),
and~$s_j\in(0,1)$ with~$s_j\nearrow1$ as~$j\to+\infty$,
such that, for every~$\varrho\in(0,+\infty]$ and every~$\ell$, $j\in\N$, there exists~$u_{j,\ell}\in H^{s_j}_0(\Omega)\cap L^\infty(\Omega)$ which is the unique weak solution of
\[
\begin{cases}
\mathcal L_{M_\ell,\varrho, s_j} u + a h(u) = f &\mbox{ in } \Omega,\\
u=0 & \mbox{ in } \R^n\setminus\Omega,
\end{cases}
\]
with
\[
\lim_{\ell\to+\infty}
\lim_{j\to+\infty} u_{j,\ell}(x) = u_1(x).
\]
\end{theorem}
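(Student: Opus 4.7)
\textbf{Proof plan for Theorem~\ref{fuweioifhqweio7564}.}
The hypothesis on~$A$ is merely boundedness, symmetry and uniform ellipticity, without any continuity, so Theorem~\ref{THMFROMATOM} cannot be applied directly. The strategy is therefore a double approximation: first mollify~$A$ to get a smooth~$A_\ell$ with the same ellipticity bounds; then use Theorem~\ref{THMFROMATOM} to assign to each~$A_\ell$ a smooth companion~$M_\ell$; then invoke Theorem~\ref{main2} to produce, for each~$\varrho\in(0,+\infty]$, the nonlocal solutions~$u_{j,\ell}$; then apply Theorem~\ref{main3} to pass to the limit~$s_j\nearrow 1$, obtaining the solutions~$u_{1,\ell}$ of the intermediate local problem driven by~$A_\ell$; and finally take the outer limit~$\ell\to+\infty$ by weak compactness in~$H^1_0(\Omega)$.

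To implement this, I first extend~$A$ to~$\R^n$ by, say, setting~$A(x):=\undertilde\lambda\operatorname{Id}$ for~$x\in\R^n\setminus\Omega$, which preserves symmetry and the bilinear bounds~$\undertilde\lambda|v|^2\le A(x)v\cdot v\le \widetilde\lambda|v|^2$. Setting~$A_\ell:=\eta_\ell\ast A$ componentwise for a standard mollifier~$\eta_\ell$, averaging preserves these bilinear bounds, so each~$A_\ell$ is smooth, symmetric, with pointwise eigenvalues in~$[\undertilde\lambda,\widetilde\lambda]$, and~$A_\ell\to A$ in~$L^p(\Omega)$ for every~$p\in[1,+\infty)$ and a.e. in~$\R^n$. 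Theorem~\ref{THMFROMATOM}, applied to~$A_\ell$ with the choice~$H\equiv 0$ in~\eqref{0wqurj3o-4uy4ijp30to5Xmm0vb5vn}, yields~$M_\ell:\R^n\times\R^n\to\R^{n\times n}$ fulfilling~\eqref{matdefpos}, \eqref{structuralM} and~\eqref{aijdefn}; the constants~$\alpha,\beta$ in~\eqref{matdefpos} depend only on~$n,\undertilde\lambda,\widetilde\lambda$ and are hence uniform in~$\ell$. Moreover, since~$N_A=(\det A)^{1/(2(n+2))}A^{-1/2}$ is smooth on the cone of symmetric matrices with eigenvalues in~$[\undertilde\lambda,\widetilde\lambda]$, each~$M_\ell$ is (globally) Lipschitz, as demanded by Theorem~\ref{main3}. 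Fixing a single sequence~$s_j\in(0,1)$ with~$s_j\nearrow 1$ independent of~$\ell$, Theorem~\ref{main2} then provides, for every~$\varrho$, $\ell$, $j$, a unique~$u_{j,\ell}\in H^{s_j}_0(\Omega)\cap L^\infty(\Omega)$ solving the nonlocal problem associated with~$\mathcal L_{M_\ell,\varrho,s_j}$, and Theorem~\ref{main3} gives~$u_{j,\ell}\to u_{1,\ell}$ a.e. as~$j\to+\infty$, where~$u_{1,\ell}\in H^1_0(\Omega)\cap L^\infty(\Omega)$ is the unique weak solution of~$-\operatorname{div}(A_\ell\nabla u)+a h(u)=f$ in~$\Omega$ with~$u=0$ on~$\partial\Omega$. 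Crucially, the estimates in~\eqref{GLinftyuBIS} are uniform in~$\ell$, since the constant therein depends only on~$n,\alpha,\beta,\Omega$.

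It remains to pass to the limit~$\ell\to+\infty$. By Banach--Alaoglu and Rellich compactness, a (nonrelabeled) subsequence satisfies~$u_{1,\ell}\rightharpoonup u_1$ weakly in~$H^1_0(\Omega)$, strongly in~$L^2(\Omega)$, and a.e. in~$\Omega$, with~$u_1\in H^1_0(\Omega)\cap L^\infty(\Omega)$ inheriting the advertised bounds. Testing the weak formulation for~$u_{1,\ell}$ against~$\varphi\in C^\infty_c(\Omega)$, the nonlinear term~$\int_\Omega a h(u_{1,\ell})\varphi$ converges to~$\int_\Omega a h(u_1)\varphi$ by dominated convergence (exploiting the~$L^\infty$ bound on~$u_{1,\ell}$, the continuity of~$h$ and~$a\in L^1(\Omega)$). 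For the linear term, the splitting~$\int A_\ell\nabla u_{1,\ell}\cdot\nabla\varphi=\int(A_\ell-A)\nabla u_{1,\ell}\cdot\nabla\varphi+\int A\nabla u_{1,\ell}\cdot\nabla\varphi$ yields convergence: the first integral vanishes by~$A_\ell\to A$ in~$L^2(\Omega)$ combined with the~$\ell$-uniform~$H^1_0$-bound on~$u_{1,\ell}$, while the second passes to~$\int A\nabla u_1\cdot\nabla\varphi$ by weak convergence. Hence~$u_1$ solves~\eqref{problemalocale}; uniqueness follows by subtracting two solutions, testing against their difference, and exploiting the ellipticity of~$A$ together with the strict monotonicity of~$h$ and the sign~$a\ge 0$ (a consequence of~\eqref{GQcond}). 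Thus the whole sequence~$u_{1,\ell}$ converges to~$u_1$, and coupled with the inner limit this produces the claimed double-limit identity.

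The principal difficulty is the approximation step: one has to realise that the (possibly discontinuous) matrix~$A$ may be replaced by smooth~$A_\ell$ for which the structural constants of Theorem~\ref{THMFROMATOM} stay~$\ell$-uniform, so that the intermediate solutions~$u_{1,\ell}$ inherit~$\ell$-uniform estimates robust enough to support the final weak-compactness passage. Once these uniformities are secured, the identification of the weak limit of~$A_\ell\nabla u_{1,\ell}$ with~$A\nabla u_1$, via the product of weak and strong convergence, is the only genuinely analytic point, everything else being a careful bookkeeping of the existence and estimates provided by Theorems~\ref{main2}, \ref{main3} and~\ref{THMFROMATOM}.
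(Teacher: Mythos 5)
Your proof is correct and follows the same overall roadmap as the paper: extend $A$ beyond $\Omega$, mollify to get smooth $A_\ell$ with uniform ellipticity, apply Theorem~\ref{THMFROMATOM} with $H\equiv 0$ to build $M_\ell$, invoke Theorems~\ref{main2} and~\ref{main3} to get the double-indexed nonlocal solutions and their inner limits $u_{1,\ell}$, and finish by weak compactness in $H^1_0(\Omega)$. The one place you genuinely diverge — and actually improve on the paper — is the justification that $M_\ell$ is Lipschitz and that $x\mapsto N_{A_\ell(x)}$ is well-defined and continuous: the paper routes this through the eigenvalue/eigenprojection perturbation machinery of Lemmata~\ref{WERFV} and~\ref{lemmafeye542yt7jksdjklinear}, whereas you observe the closed-form identity
\begin{equation*}
N_A=(\det A)^{\frac{1}{2(n+2)}}\,A^{-1/2},
\end{equation*}
which exhibits $A\mapsto N_A$ as a smooth matrix function on the cone of symmetric matrices with spectrum in $[\undertilde\lambda,\widetilde\lambda]$ and makes the regularity of $M_\ell$ immediate. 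This identity is a simple consequence of~\eqref{sigmai2} and~\eqref{MOSIGMAOT} (indeed $\Sigma=\overline\lambda^{1/(2(n+2))}\Lambda^{-1/2}$, so $N_A=O\Sigma O^T=(\det A)^{1/(2(n+2))}O\Lambda^{-1/2}O^T$), and it sidesteps the subtler question of selecting orthogonal diagonalizers continuously. You are also more explicit than the paper on the final passage $\ell\to+\infty$ (the split $\int(A_\ell-A)\nabla u_{1,\ell}\cdot\nabla\varphi+\int A\nabla u_{1,\ell}\cdot\nabla\varphi$ and the use of $A_\ell\to A$ in $L^2(\Omega)$), where the paper is terse; this is a welcome addition. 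The only cosmetic difference is the extension of $A$ outside $\Omega$ (you use $\undertilde\lambda\operatorname{Id}$, the paper uses an orthogonal-projection extension on a neighborhood of $\overline\Omega$), but both preserve the ellipticity bounds, which is all that matters for the mollification step.
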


Interestingly, Theorem~\ref{fuweioifhqweio7564} also provides an alternative proof of~\cite[Theorem~2.1]{MR4145478},
since it demonstrates the existence of the classical solution~$u_1$ which is obtained
via a limit procedure as~$s\nearrow1$.\medskip

Our paper is organized as follows. 
In Section~\ref{SEC4} we set up the stage for Theorems~\ref{main2} and~\ref{thmminimo}, which are then proved at the end of the section.
In Section~\ref{sectionlimitelocale},  we show that the solution found through Theorem~\ref{main2} converges, in the limit~$s\nearrow 1$, to a solution of the local problem~\eqref{problemalocale}, thus establishing Theorem~\ref{main3}.

In Section~\ref{SEC3}
we discuss the twofold nature of formula~\eqref{aijdefn}, proving that it can also be applied to construct~$M$ starting from~$A$.
This section contains the proof of Theorem~\ref{THMFROMATOM},
which allows to recover the classical result, as stated in Theorem~\ref{fuweioifhqweio7564}.

The paper ends with some appendices that contain the statements and proofs
of some technical results.

\section{Existence theory and proof of Theorems~\ref{main2} and~\ref{thmminimo}}\label{SEC4}

\subsection{Functional analysis preliminaries}\label{SEC4iueryihglsdg}
In this section we introduce the variational framework needed to deal with problems~\eqref{davidnonlocal2} and~\eqref{problemaGxuu} and we provide some preliminary results which will be required to prove Theorems~\ref{main2} and~\ref{thmminimo}.

For~$s\in(0,1)$, we let
\begin{equation}\label{gagliardoseminorm}
[u]_{s}: = \left({c_{n,s}}\iint_{\R^n\times\R^n} \frac{|u(x) - u(y)|^2}{|x - y|^{n+2s}}\, dx\, dy\right)^{\frac12}
\end{equation}
be the Sobolev-Gagliardo seminorm of a measurable function~$u : \R^n \to \R$, see e.g.~\cite{MR2944369}.

The positive normalizing constant~$c_{n,s}$ is chosen in such a way that it provides consistent limits as~$s\searrow 0$ and~$s\nearrow 1$, that is
\[
\lim_{s\nearrow1}[u]_{s}=[u]_1:=\|\nabla u\|_{L^2(\R^n)}\qquad{\mbox{and}}\qquad\lim_{s\searrow0}[u]_s=[u]_0:=\|u\|_{L^2(\R^n)}.
\]
As customary, we define the fractional Sobolev space
\[
H^s(\R^n):=\left\{ u\in L^2(\R^n) : [u]_s <+\infty \right\}
\]
endowed with the norm
\[
\|u\|_{H^s(\R^n)}:= \left([u]^2_s + \|u\|^2_{L^2(\R^n)}\right)^{\frac12}.
\]
Moreover, given an open bounded subset of~$\R^n$ with Lipschitz boundary, we set
\[
H^s_0(\Omega):=\{u\in H^s(\R^n) : u=0 \mbox{ in } \R^n\setminus\Omega\}
\]
endowed with the equivalent norm~$\|\cdot\|_{H^s_0(\Omega)} = [\cdot]_s$.

It is also useful to observe that the norm induced by the kernel~$K$ is equivalent
to the fractional Sobolev-Gagliardo norm, as stated in the following result:

\begin{lemma}\label{LEMMAANCVARRO}
There exists~$C_\star>0$, depending only on~$n$, $s$, $c$, $\varrho$ and~$\Omega$, such that, for all~$u:\R^n\to\R$ with~$u=0$ in~$\R^n\setminus\Omega$,
$$\iint_{\R^n\times\R^n} \frac{|u(x)-u(z)|^2}{|x-z|^{n+2s}}\,dx\,dz
\le C_\star\,\iint_{\R^n\times\R^n}|u(x)-u(z)|^2 K(x,z)\,dx\,dz.$$
\end{lemma}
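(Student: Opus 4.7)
My plan is to split the Gagliardo integral on the left-hand side into the ``near'' region $\{|x-z|<\varrho\}$ and the ``far'' region $\{|x-z|\ge\varrho\}$, and treat each separately.

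For the near region, I would apply the lower bound in~\eqref{QrehtgDWFfbg0jolwfdv:Qwfgb}, which gives immediately $|x-z|^{-n-2s}\le c^{-1}K(x,z)$ whenever $|x-z|<\varrho$, and hence
\[
\iint_{|x-z|<\varrho}\frac{|u(x)-u(z)|^2}{|x-z|^{n+2s}}\,dx\,dz\le\frac{1}{c}\iint_{\R^n\times\R^n}|u(x)-u(z)|^2\,K(x,z)\,dx\,dz.
\]
For the far region, I would exploit that $u$ is supported in $\Omega$: the integrand vanishes when both $x$ and $z$ lie outside $\Omega$, so by symmetry and the crude bound $|u(x)-u(z)|^2\le 2u(x)^2+2u(z)^2$, combined with the explicit computation $\int_{|y|\ge\varrho}|y|^{-n-2s}\,dy=\omega_{n-1}/(2s\varrho^{2s})$, I would obtain
\[
\iint_{|x-z|\ge\varrho}\frac{|u(x)-u(z)|^2}{|x-z|^{n+2s}}\,dx\,dz\le C_1\,\|u\|_{L^2(\Omega)}^2,
\]
with $C_1$ depending only on $n$, $s$ and $\varrho$.

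The remaining task is therefore to bound $\|u\|_{L^2(\Omega)}^2$ by a constant times the right-hand side of the statement. I would first lower-bound the right-hand side by the \emph{local} Gagliardo seminorm (again via the lower bound in~\eqref{QrehtgDWFfbg0jolwfdv:Qwfgb}), reducing the task to a localized fractional Poincaré inequality
\[
\|u\|_{L^2(\Omega)}^2\le C_2\iint_{|x-z|<\varrho}\frac{|u(x)-u(z)|^2}{|x-z|^{n+2s}}\,dx\,dz
\]
valid for all $u\in H^s_0(\Omega)$, with $C_2$ depending on $n$, $s$, $\varrho$ and $\Omega$. For points $x\in\Omega$ within distance $\varrho$ of $\partial\Omega$, this follows at once from averaging the identity $u(x)^2=|u(x)-u(z)|^2$ over $z$ in a portion of $B(x,\varrho)\setminus\Omega$ of controlled measure, which exists thanks to the Lipschitz regularity of $\partial\Omega$. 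For points $x$ lying deeper inside $\Omega$, I would employ a chaining argument: construct a finite chain $x=x_0,x_1,\dots,x_N$ with $|x_i-x_{i+1}|<\varrho/2$ and $x_N\in\R^n\setminus\Omega$, apply a telescoping sum with discrete Cauchy--Schwarz to get $|u(x)|^2\le N\sum_{i=0}^{N-1}|u(x_i)-u(x_{i+1})|^2$, and average over the positions of the intermediate points so that each term is absorbed into the localized Gagliardo integral, with $N$ controlled by $C\,(\mathrm{diam}(\Omega)/\varrho+1)$.

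Combining these three ingredients yields the claim with $C_\star$ depending on $n$, $s$, $c$, $\varrho$ and $\Omega$. The main obstacle is this last step: establishing the local Poincaré inequality at points of $\Omega$ far from $\partial\Omega$, where a direct ``exterior point'' argument is unavailable, and the chaining procedure must be arranged carefully so that the resulting constant depends only on the geometry of $\Omega$ and on $\varrho$, not on the individual function $u$.
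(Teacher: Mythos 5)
Your overall decomposition follows the same outline as the paper's proof: lower-bound $K$ by the truncated kernel on the near-diagonal set, handle the far-off-diagonal set crudely via $|u(x)-u(z)|^2\le 2u(x)^2+2u(z)^2$ (which gives a multiple of $\|u\|_{L^2(\Omega)}^2$), and reduce the whole question to a localized fractional Poincar\'e inequality bounding $\|u\|_{L^2(\Omega)}^2$ by the Gagliardo energy restricted to $\{|x-z|<\varrho\}$. Your near-diagonal step is in fact a bit cleaner than the paper's, which passes through a Whitney-type decomposition into cubes $Q_j$ of side $\varrho/(8\sqrt n)$ and their neighbourhoods $Q_j^\star$, rather than splitting directly at $|x-z|=\varrho$.

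The genuine divergence is in how the Poincar\'e inequality \eqref{IQSDSFDVQWFGQwefgr} is proved. The paper argues by compactness and contradiction: assuming the inequality fails along a normalized sequence $u_m$, Rellich--Kondrachov on each cube $Q_j$ produces an $L^2$ and a.e.\ limit $u_\infty$, Fatou forces the Gagliardo energy of $u_\infty$ over each $Q_j\times Q_{j,\varrho}$ to vanish, hence $u_\infty$ is locally constant, and the overlapping cubes chain these constants together so $u_\infty$ is a global constant vanishing outside $\Omega$, contradicting $\|u_\infty\|_{L^2}=1$. You instead propose a constructive argument: an exterior-cone/corkscrew estimate for points near $\partial\Omega$, and an explicit chaining-plus-averaging argument for points deep in $\Omega$. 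That route is legitimate and would yield explicit constants where the paper's compactness argument is non-constructive, but it is also where your proposal stops short. The step ``average over the positions of the intermediate points so that each term is absorbed into the localized Gagliardo integral'' hides the entire content: one must choose, for each $x$, a tube of chains parametrized by positive-measure sets of intermediate points, apply Cauchy--Schwarz or Jensen at each link, and then show the resulting multilinear integral is controlled by $\iint_{|x-z|<\varrho}|u(x)-u(z)|^2|x-z|^{-n-2s}\,dx\,dz$ with a constant depending only on $n$, $s$, $\varrho$, $\Omega$. This is doable (it is essentially a fractional version of the classical chain-of-balls Poincar\'e argument) but is not a one-liner, and you have correctly flagged it as the obstacle; as written the proof is incomplete at precisely this point. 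The paper sidesteps all of it by paying the price of a non-quantitative compactness argument.

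One small inaccuracy: for the near-boundary case you should require $\mathrm{dist}(x,\partial\Omega)<\varrho/2$ (say), so that after locating $x'\in\partial\Omega$ with $|x-x'|<\varrho/2$ the exterior corkscrew ball at scale $\varrho/2$ centred near $x'$ is guaranteed to lie inside $B(x,\varrho)$; at distance exactly $\varrho$ the containment can fail.
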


For convenience of the reader, the proof of this result is presented in Appendix~\ref{APPE1}.

With the setting introduced above, the definition of weak solution of problem~\eqref{davidnonlocal2} reads as follows:

\begin{definition}\label{defnweaksol}
We say that~$u\in H^s_0(\Omega)$ is a weak solution of~\eqref{davidnonlocal2} if, for any~$v\in H^s_0(\Omega)\cap L^\infty(\Omega)$,
\[
\frac{1}{2}\iint_{\R^n\times\R^n} \big(u(x)-u(z)\big)\big(v(x)-v(z)\big)K(x,z)\, dx \,dz + \int_\Omega a(x) h(u(x)) v(x) \, dx = \int_\Omega f(x) v(x)\, dx.
\]
\end{definition}

We now let~$k\ge 0$ and we consider the cutoff function~$G_k:\R\to\R$ defined as
\begin{equation}\label{Gkdefn}
G_k (t):= \begin{cases}
0 &\mbox{ if } |t|\le k,\\
t-k &\mbox{ if } t> k,\\
t+k &\mbox{ if } t<- k.
\end{cases}
\end{equation}

The function~$G_k$ enjoys some useful properties, that we now recall, for the facility of the reader.

\begin{proposition}
Let~$k\ge 0$ and~$G_k:\R\to\R$ be defined as in~\eqref{Gkdefn}. Then, for any~$t$, $r\in\R$, we have that
\begin{align}\label{gprop1}
& t\, G_k (t) = |t| |G_k (t)|,\\ \label{gprop3}
& {\mbox{$G_k (t)$ is nondecreasing,}}\\ \label{gprop2}
\mbox{ and }\quad &|G_k (t) -  G_k(r)| \le |t-r|.
\end{align}
\end{proposition}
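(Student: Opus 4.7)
The proof is entirely elementary and reduces to a case analysis based on which of the three intervals $\{|t|\le k\}$, $\{t>k\}$, $\{t<-k\}$ contains the argument. I will verify each of the three properties in turn.

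For \eqref{gprop1}, the plan is to show that $t$ and $G_k(t)$ always share the same sign (or one of them is zero). If $|t|\le k$, then $G_k(t)=0$ and both sides vanish. If $t>k\ge 0$, then $G_k(t)=t-k>0$, so $tG_k(t)=t(t-k)=|t|\,|G_k(t)|$. If $t<-k\le 0$, then $G_k(t)=t+k<0$, whence $tG_k(t)=t(t+k)>0$ and $|t|\,|G_k(t)|=(-t)(-(t+k))=t(t+k)$. The three cases give the desired identity.

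For \eqref{gprop3}, on each of the defining sub-intervals the function $G_k$ is either identically zero or affine with slope one, hence nondecreasing on each piece. Since $G_k$ is continuous at the break points $\pm k$ (indeed $G_k(\pm k)=0$), monotonicity on each piece yields global monotonicity.

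For \eqref{gprop2}, the cleanest argument is to note that $G_k$ is locally Lipschitz, with classical derivative equal to $0$ or $1$ at every point where it is differentiable (i.e.\ everywhere except at $\pm k$). Hence $G_k$ is absolutely continuous with $|G_k'|\le 1$ a.e., and the fundamental theorem of calculus gives $|G_k(t)-G_k(r)|\le |t-r|$. Equivalently, one can simply write $G_k(t)=\operatorname{sign}(t)\max(|t|-k,0)$ and check the inequality by comparing the three regions for $t$ and $r$ (using symmetry in $t\leftrightarrow r$ there are only a handful of subcases). Either way, the only obstacle is routine bookkeeping; there is no conceptual difficulty.
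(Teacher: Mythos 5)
Your proof is correct, and it follows essentially the same elementary route as the paper. The paper packages the three checks a bit more compactly by first writing $G_k(t)=\int_0^t \chi_{\R\setminus[-k,k]}(x)\,dx$ and deriving \eqref{gprop1} and \eqref{gprop2} directly from this integral representation (your fundamental-theorem-of-calculus argument for \eqref{gprop2} is the same idea in different clothing), while for \eqref{gprop3} it simply invokes inspection of cases, exactly as you do; your explicit three-case verification of \eqref{gprop1} replaces the paper's one-line appeal to the integral form, but the content is identical.
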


\begin{proof}
We can write~\eqref{Gkdefn} in a compact way as
\[
G_k(t)= \int_0^t \chi_{\R\setminus [-k, k]} (x) \,dx.
\]
{F}rom this, the property in~\eqref{gprop1} plainly follows.

Also, by inspection of all possible cases, one can check that~\eqref{gprop3} holds true. 

Moreover,
\[
|G_k (t)-G_k(r)| = \left|\int_{r}^t \chi_{\R\setminus [-k, k]} (x) \,dx \right| \le |t-r|,
\]
which proves~\eqref{gprop2}. 
\end{proof}

\begin{proposition}\label{Gknellospazio}
Let~$k\ge 0$ and~$u\in H^s_0(\Omega)$. Then, $G_k (u)$ belongs to~$H^s_0(\Omega)$.
\end{proposition}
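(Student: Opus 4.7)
The plan is simply to verify the two defining conditions of $H^s_0(\Omega)$ for the function $G_k(u)$: namely, that $G_k(u)$ vanishes almost everywhere outside $\Omega$, and that $G_k(u) \in H^s(\R^n)$.

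First, I would address the boundary condition. Since $u \in H^s_0(\Omega)$, we have $u=0$ a.e.\ in~$\R^n\setminus\Omega$, and from the definition~\eqref{Gkdefn} it is immediate that $G_k(0)=0$. Hence $G_k(u)=0$ a.e.\ in~$\R^n\setminus\Omega$.

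Next, I would verify that $G_k(u)\in H^s(\R^n)$ by leveraging the 1-Lipschitz property~\eqref{gprop2}. Combined with $G_k(0)=0$, this yields the pointwise bound $|G_k(t)|\le|t|$ for all~$t\in\R$, so that
\[
\|G_k(u)\|_{L^2(\R^n)}\le \|u\|_{L^2(\R^n)}<+\infty.
\]
For the Gagliardo seminorm, applying~\eqref{gprop2} pointwise gives
\[
|G_k(u(x))-G_k(u(y))|^2 \le |u(x)-u(y)|^2
\]
for a.e.~$x,y\in\R^n$. Dividing by~$|x-y|^{n+2s}$, integrating over~$\R^n\times\R^n$, and multiplying by~$c_{n,s}$ (recall~\eqref{gagliardoseminorm}), I obtain $[G_k(u)]_s\le [u]_s<+\infty$. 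Combining this with the $L^2$-bound shows $G_k(u)\in H^s(\R^n)$, and together with the first step this gives $G_k(u)\in H^s_0(\Omega)$.

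The argument is essentially a routine verification, and no step should present a serious obstacle: the work has already been done in establishing the three properties~\eqref{gprop1}--\eqref{gprop2} of~$G_k$, of which only~\eqref{gprop2} (the 1-Lipschitz estimate) is actually needed here.
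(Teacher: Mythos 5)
Your proposal is correct and follows essentially the same argument as the paper: use the $1$-Lipschitz bound~\eqref{gprop2} together with $G_k(0)=0$ to get $|G_k(u)|\le|u|$ for the $L^2$ control, the same bound for the Gagliardo seminorm, and the vanishing of $u$ outside $\Omega$ for the support condition.
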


\begin{proof}
{F}rom~\eqref{gprop2}, we have that~$|G_k(u)|\le |u|$, and therefore~$G_k(u)\in L^2(\R^n)$.

Moreover, by~\eqref{gagliardoseminorm} and~\eqref{gprop2}, we infer that
\[
\begin{split}&
[G_k (u)]^2_s = c_{n, s} \iint_{\R^n\times\R^n} \frac{|G_k(u(x))- G_k(u(y))|^2}{|x-y|^{n+2s}}\, dx \,dy\\
&\qquad\le c_{n, s}\iint_{\R^n\times\R^n} \frac{|u(x) - u(y)|^2}{|x-y|^{n+2s}}\, dx\, dy= [u]^2_s,
\end{split}
\]
and therefore~$G_k(u)\in H^s(\R^n)$.

Furthermore, in~$\R^n\setminus\Omega$ we have that~$u=0$, thus~$G_k (u)=0$. Gathering these
pieces of information, we obtain the desired claim.
\end{proof}

\subsection{Proof of Theorem~\ref{thmminimo}}
The proof of Theorem~\ref{thmminimo} is deduced from a minimization argument, whose details are as follows.

We set
\begin{equation*}
H(t):=\int_0^t h(\tau) \,d\tau.
\end{equation*}

We consider the functional~$J:H^s_0(\Omega)\to\R\cup\{+\infty,-\infty\}$ defined as
\begin{equation}\label{functionalGxu}
J(u):=\begin{cases}
\displaystyle\frac{1}{4}\displaystyle\iint_{\R^n\times\R^n} |u(x)-u(z)|^2K(x,z)\, dx\, dz +\int_{\Omega} \eta(x)H(u(x))\, dx-\int_\Omega \zeta(x) u(x)\, dx \\
\qquad \qquad\qquad\mbox{ if } \eta (H\circ u)\in L^1(\Omega),\\
+\infty \qquad \mbox{ if } \eta (H\circ u)\notin L^1(\Omega).
\end{cases}
\end{equation}
We now check the following properties of~$J$:

\begin{lemma}\label{q0wdfujgobn0ytuik}
Let~$u\in H^s_0(\Omega)$. If~$\eta(H\circ u)\in L^1(\Omega)$, then~$|J(u)|<+\infty$.

In particular, $J:H^s_0(\Omega)\to\R\cup\{+\infty\}$.\end{lemma}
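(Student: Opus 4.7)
The plan is to verify that each of the three terms making up $J(u)$ in \eqref{functionalGxu} is finite whenever $\eta\,(H\circ u)\in L^1(\Omega)$, and then to observe that the ``In particular'' claim is an immediate consequence of the definition of $J$.

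First, for the quadratic nonlocal term, I would invoke directly the upper bound in the structural assumption~\eqref{QrehtgDWFfbg0jolwfdv:Qwfgb}: since $K(x,z)\le C\,|x-z|^{-n-2s}$, one has
\[
\iint_{\R^n\times\R^n} |u(x)-u(z)|^2\,K(x,z)\,dx\,dz \;\le\; \frac{C}{c_{n,s}}\,[u]_s^2 \;<\;+\infty,
\]
where finiteness of $[u]_s$ follows from $u\in H^s_0(\Omega)$ (recall the definition in~\eqref{gagliardoseminorm}). Thus the first summand in $J(u)$ is finite (note that here only the upper bound on $K$ is needed; the nontrivial Lemma~\ref{LEMMAANCVARRO} is not required at this stage).

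Next, the second summand $\int_\Omega \eta(x)H(u(x))\,dx$ is finite by the standing hypothesis that $\eta\,(H\circ u)\in L^1(\Omega)$. For the third summand, since $u\in H^s_0(\Omega)\subset L^2(\R^n)$ and $\zeta\in L^2(\Omega)$, the Cauchy--Schwarz inequality yields
\[
\left|\int_\Omega \zeta(x)\,u(x)\,dx\right| \;\le\; \|\zeta\|_{L^2(\Omega)}\,\|u\|_{L^2(\Omega)} \;<\;+\infty.
\]
Summing up these three bounds gives $|J(u)|<+\infty$, which is the main assertion.

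Finally, the ``In particular'' statement follows at once: if $\eta\,(H\circ u)\notin L^1(\Omega)$, then by definition $J(u)=+\infty$; otherwise the argument above gives $J(u)\in\R$. Hence $J$ never takes the value $-\infty$, i.e.\ $J:H^s_0(\Omega)\to\R\cup\{+\infty\}$. There is no substantive obstacle in this proof: the work is purely a matter of sorting out which term is controlled by which hypothesis, the only mildly delicate point being the decision to estimate the kernel-energy via the upper bound on $K$ rather than via the (sharper but heavier) equivalence of norms in Lemma~\ref{LEMMAANCVARRO}.
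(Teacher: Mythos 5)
Your proof is correct and follows essentially the same route as the paper: the same three-term decomposition of $J(u)$, with the kernel-energy estimated via the upper bound on $K$ from~\eqref{QrehtgDWFfbg0jolwfdv:Qwfgb}, the middle term controlled directly by the hypothesis $\eta\,(H\circ u)\in L^1(\Omega)$, and the last term by Cauchy--Schwarz. The concluding observation that $J$ never takes the value $-\infty$ matches the paper's logic as well.
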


\begin{proof}
If~$u\in H^s_0(\Omega)$, then from the upper bound in~\eqref{QrehtgDWFfbg0jolwfdv:Qwfgb} we have that
\[
\iint_{\R^n\times\R^n} |u(x)-u(z)|^2K(x,z)\, dx\, dz
\le C\iint_{\R^n\times\R^n} \frac{|u(x)-u(z)|^2}{|x-z|^{n+2s}}\, dx\, dz=
\frac{C}{c_{n,s}}[u]^2_{s}<+\infty.
\]
Moreover,
\[
\left|\int_{\Omega} \eta(x)H(u(x)) \,dx\right|\le  \int_\Omega \eta(x)\,H(u(x))\,dx<+\infty
\]
and, since~$u\in L^2(\Omega)$, by the H\"older inequality,
\begin{equation}\label{qwd0fojgbnlgX5ryku}
\left|\int_\Omega \zeta(x) u(x)\, dx\right|\le \|\zeta\|_{L^2(\Omega)} \|u\|_{L^2(\Omega)} <+\infty.
\end{equation}
Combining these considerations with~\eqref{functionalGxu}, we get desired result.
\end{proof}

We also observe that
\begin{equation}\label{ufeiwqgfwr8743y8hgh}
{\mbox{critical points of the functional~$J$ are weak solutions of~\eqref{problemaGxuu}.}}
\end{equation}

\begin{lemma}\label{Lemmawlsc}
The functional~$J$ is weakly lower semicontinuous with respect to the topology of~$H^s_0(\Omega)$,
namely for every sequence~$u_k\in H^s_0(\Omega)$ which converges weakly to some~$u\in H^s_0(\Omega)$ as~$k\to+\infty$ we have that
\begin{equation*}
\liminf_{k\to +\infty}J(u_k)\ge J(u).
\end{equation*}
\end{lemma}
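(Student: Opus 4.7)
The plan is to decompose~$J=Q_K+F-L$, where
\[
Q_K(u):=\tfrac14\iint_{\R^n\times\R^n}|u(x)-u(z)|^2K(x,z)\,dx\,dz,\quad F(u):=\int_\Omega\eta(x) H(u(x))\,dx,\quad L(u):=\int_\Omega\zeta(x) u(x)\,dx,
\]
and to show that~$Q_K$ and~$F$ are weakly lower semicontinuous on~$H^s_0(\Omega)$ while~$L$ is weakly continuous. Given a sequence~$u_k\rightharpoonup u$ in~$H^s_0(\Omega)$, we may assume, passing to a subsequence (still denoted~$u_k$), that~$J(u_k)\to\ell:=\liminf_k J(u_k)$ and that~$\ell<+\infty$ (otherwise there is nothing to show). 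By the compact embedding~$H^s_0(\Omega)\hookrightarrow L^2(\Omega)$, valid for bounded Lipschitz~$\Omega$, we then extract a further subsequence along which~$u_k\to u$ strongly in~$L^2(\Omega)$ and a.e. in~$\Omega$.

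For the linear piece, strong~$L^2$ convergence together with~$\zeta\in L^2(\Omega)$ immediately yields~$L(u_k)\to L(u)$. For the quadratic piece, I would note that, since~$u_k=u=0$ outside~$\Omega$, the a.e. convergence in~$\Omega$ upgrades (via Fubini) to a.e. convergence in~$\R^n\times\R^n$ of the nonnegative integrand~$|u_k(x)-u_k(z)|^2K(x,z)$, so that Fatou's lemma gives~$Q_K(u)\le\liminf_k Q_K(u_k)$. Equivalently, the upper bound in~\eqref{QrehtgDWFfbg0jolwfdv:Qwfgb} makes~$u\mapsto u(x)-u(z)$ a bounded linear operator from~$H^s_0(\Omega)$ into the Hilbert space~$L^2(\R^n\times\R^n,K\,dx\,dz)$, hence weakly continuous, and~$Q_K$ is then weakly lower semicontinuous as the square of a Hilbert-space norm.

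For the nonlinear piece, the key remark is that assumption~\eqref{ginftydef} forces~$h(0)=0$ and~$h$ to have the sign of its argument, so~$H(t)=\int_0^t h(\tau)\,d\tau\ge 0$ for every~$t\in\R$. Combined with~$\eta\ge 0$, the integrand~$\eta(x)H(u_k(x))$ is nonnegative and, by continuity of~$H$, converges a.e. in~$\Omega$ to~$\eta(x)H(u(x))$; Fatou's lemma therefore delivers~$F(u)\le\liminf_k F(u_k)$. In particular, if we had~$F(u)=+\infty$ this would force~$F(u_k)\to+\infty$, contradicting~$\ell<+\infty$; hence~$\eta(H\circ u)\in L^1(\Omega)$ and~$J(u)$ is given by the finite expression in~\eqref{functionalGxu}. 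Combining the three bounds via~$\liminf(a_k+b_k)\ge\liminf a_k+\liminf b_k$,
\[
J(u)=Q_K(u)+F(u)-L(u)\le \liminf_k Q_K(u_k)+\liminf_k F(u_k)-\lim_k L(u_k)\le\liminf_k J(u_k)=\ell,
\]
which is the desired inequality. The only real subtlety is the coordinated choice of subsequences—one to realize the~$\liminf$, the other for compactness and a.e. convergence—but this is a routine nested extraction; all the genuine analytic content is concentrated in the two applications of Fatou's lemma and in the compact embedding~$H^s_0(\Omega)\hookrightarrow L^2(\Omega)$.
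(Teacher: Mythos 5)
Your proof is correct and follows essentially the same route as the paper's: extract a subsequence along which $J(u_k)$ converges and the $L^2$-compact embedding gives a.e.\ convergence, apply Fatou to the quadratic and nonlinear terms (using $H\ge 0$, which follows from the oddness and monotonicity in~\eqref{ginftydef}), pass to the limit in the linear term by strong $L^2$ convergence, and observe that finiteness of the liminf forces $\eta(H\circ u)\in L^1(\Omega)$. The only cosmetic difference is that the paper phrases the argument as a proof by contradiction rather than extracting a subsequence realizing the liminf directly, and you offer an extra (valid) Hilbert-space viewpoint for the weak lower semicontinuity of the quadratic term; neither changes the substance.
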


\begin{proof} To prove the desired result, we argue by contradiction and we suppose that
there exists a sequence~$u_k\in H^s_0(\Omega)$ weakly converging to some~$u\in H^s_0(\Omega)$ as~$k\to+\infty$ and such that
\begin{equation}\label{qwd0fojgbnlgX5ryku.2}
\lim_{k\to +\infty}J(u_k)< J(u).
\end{equation}
In particular, 
\begin{equation}\label{0qwdfojghb0rntijkXk} \lim_{k\to +\infty}J(u_k)<+\infty,\end{equation}
which, in tandem with Lemma~\ref{q0wdfujgobn0ytuik},
implies that~$\eta(H\circ u_k)\in L^1(\Omega)$ for~$k$ sufficiently large.

By the compactness result in~\cite[Theorem~7.1]{MR2944369} we have that
there exists a subsequence~$u_{k_j}$ strongly
converging to~$u$ in~$L^2(\Omega)$ and a.e. in~$\Omega$. Therefore,
by Fatou's Lemma,
\begin{equation*}
\liminf_{j\to +\infty}\iint_{\R^n\times\R^n} |u_{k_j}(x)-u_{k_j}(z)|^2K(x,z)\, dx\, dz \ge\iint_{\R^n\times\R^n} |u(x)-u(z)|^2K(x,z) \,dx\, dz.
\end{equation*}

By~\eqref{ginftydef}, it follows that
\begin{equation}\label{Gge0}
H(t)\ge 0\quad\mbox{ for any } t\in\R.
\end{equation}
Consequently, since~$\eta\ge 0$ and~$H$ is continuous, it follows from
Fatou's Lemma that
\begin{equation*}
\liminf_{j\to +\infty}\int_\Omega \eta(x)H(u_{k_j}(x))\,dx\ge \int_\Omega \eta(x)H(u(x))\,dx.
\end{equation*}

Furthermore, since~$\zeta\in L^2(\Omega)$, the strong convergence of~$u_{k_j}$ in~$L^2(\Omega)$ implies that
\begin{equation*}
\lim_{j\to +\infty} \int_\Omega \zeta(x) u_{k_{j}}(x)\, dx = \int_\Omega \zeta(x) u(x)\, dx.
\end{equation*}

Gathering these pieces of information and recalling the definition of~$J$ in~\eqref{functionalGxu}, 
we conclude that 
\begin{equation}\label{PADLSFNVBFKOLKNJDFDFO9diwfkf}\begin{split}
&\lim_{j\to +\infty}J(u_{k_{j}})\\&
\ge \liminf_{j\to +\infty}
\displaystyle\frac{1}{4}\displaystyle\iint_{\R^n\times\R^n} |u_{k_{j}}(x)-u_{k_{j}}(z)|^2K(x,z)\, dx\, dz +\int_{\Omega} \eta(x)H(u_{k_{j}}(x))\, dx-\int_\Omega \zeta(x) u_{k_j}(x)\, dx\\&\ge
\displaystyle\frac{1}{4}\displaystyle\iint_{\R^n\times\R^n} |u(x)-u(z)|^2K(x,z)\, dx\, dz +\int_{\Omega} \eta(x)H(u(x))\, dx-\int_\Omega \zeta(x) u(x)\, dx
.\end{split}\end{equation}

Hence, recalling~\eqref{qwd0fojgbnlgX5ryku},
$$ \int_{\Omega} \eta(x)H(u(x))\, dx\le\int_\Omega \zeta(x) u(x)\, dx+\lim_{j\to +\infty}J(u_{k_{j}})\le \|\zeta\|_{L^2(\Omega)} \|u\|_{L^2(\Omega)} +\lim_{j\to +\infty}J(u_{k_{j}}).$$
We stress that the latter quantity is finite, owing to~\eqref{0qwdfojghb0rntijkXk}, and therefore~$\eta(H\circ u)\in L^1(\Omega)$.

As a result, by Lemma~\ref{q0wdfujgobn0ytuik},
$$ J(u)=\displaystyle\frac{1}{4}\displaystyle\iint_{\R^n\times\R^n} |u(x)-u(z)|^2K(x,z)\, dx\, dz +\int_{\Omega} \eta(x)H(u(x))\, dx-\int_\Omega \zeta(x) u(x)\, dx.$$

{F}rom this, \eqref{qwd0fojgbnlgX5ryku.2} and~\eqref{PADLSFNVBFKOLKNJDFDFO9diwfkf} we obtain that
\begin{eqnarray*}&& J(u)>\lim_{j\to +\infty}J(u_{k_{j}})\\&&\quad\ge
\displaystyle\frac{1}{4}\displaystyle\iint_{\R^n\times\R^n} |u(x)-u(z)|^2K(x,z)\, dx\, dz +\int_{\Omega} \eta(x)H(u(x))\, dx-\int_\Omega \zeta(x) u(x)\, dx=J(u).\end{eqnarray*}
This is a contradiction establishing the desired result.
\end{proof}

\begin{lemma}\label{lemmacoercivo}
The functional~$J$ is coercive, namely
\begin{equation*}
\lim_{[u]_s\to+\infty}\frac{J(u)}{[u]_s}=+\infty.
\end{equation*}
\end{lemma}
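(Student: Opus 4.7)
The plan is to bound $J(u)$ from below by a quadratic-minus-linear expression in $[u]_s$ and then divide by $[u]_s$. The case where $\eta(H\circ u)\notin L^1(\Omega)$ is trivial since $J(u)=+\infty$, so I focus on functions $u\in H^s_0(\Omega)$ for which the integral term is finite.

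First, I would use Lemma~\ref{LEMMAANCVARRO} in reverse to control the kernel-weighted Dirichlet form from below by the Gagliardo seminorm: since
\[
[u]_s^2 = c_{n,s}\iint_{\R^n\times\R^n}\frac{|u(x)-u(z)|^2}{|x-z|^{n+2s}}\,dx\,dz \le c_{n,s} C_\star \iint_{\R^n\times\R^n}|u(x)-u(z)|^2 K(x,z)\,dx\,dz,
\]
we obtain
\[
\frac{1}{4}\iint_{\R^n\times\R^n}|u(x)-u(z)|^2 K(x,z)\,dx\,dz \;\ge\; \frac{1}{4c_{n,s}C_\star}\,[u]_s^2.
\]

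Next, I would discard the potential term, which is nonnegative: by~\eqref{ginftydef} and~\eqref{Gge0} together with $\eta\ge 0$ a.e., we have $\int_\Omega \eta(x)H(u(x))\,dx \ge 0$. For the linear term, I would use the Cauchy--Schwarz inequality exactly as in~\eqref{qwd0fojgbnlgX5ryku}, followed by the fractional Poincar\'e inequality on the bounded domain $\Omega$ (standard for $H^s_0(\Omega)$, see e.g. the discussion in Section~\ref{SEC4iueryihglsdg}), giving $\|u\|_{L^2(\Omega)}\le C_P [u]_s$ for some $C_P>0$ depending only on $n$, $s$ and $\Omega$. Hence
\[
\left|\int_\Omega \zeta(x)u(x)\,dx\right| \le \|\zeta\|_{L^2(\Omega)}\|u\|_{L^2(\Omega)} \le C_P\|\zeta\|_{L^2(\Omega)}\,[u]_s.
\]

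Combining the three estimates yields
\[
J(u)\;\ge\; \frac{1}{4c_{n,s}C_\star}\,[u]_s^2 \;-\; C_P\|\zeta\|_{L^2(\Omega)}\,[u]_s,
\]
so that
\[
\frac{J(u)}{[u]_s} \;\ge\; \frac{[u]_s}{4c_{n,s}C_\star} \;-\; C_P\|\zeta\|_{L^2(\Omega)} \;\longrightarrow\; +\infty \quad\text{as } [u]_s\to+\infty,
\]
which is the desired coercivity.

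There is no real obstacle here: all ingredients are already established. The only point to be careful about is that the lower bound on the kernel in~\eqref{QrehtgDWFfbg0jolwfdv:Qwfgb} is only effective on $|x-z|<\varrho$, which is precisely why Lemma~\ref{LEMMAANCVARRO} (whose proof in Appendix~\ref{APPE1} handles this by using that $u$ vanishes outside the bounded set $\Omega$) is invoked rather than the pointwise lower bound directly.
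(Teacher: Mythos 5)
Your proof is correct and follows essentially the same route as the paper: both invoke Lemma~\ref{LEMMAANCVARRO} to bound the kernel Dirichlet energy from below by $[u]_s^2$, both discard the nonnegative potential term using~\eqref{Gge0} and $\eta\ge 0$, and both control the linear term via Cauchy--Schwarz together with a fractional Poincar\'e/Sobolev-embedding bound $\|u\|_{L^2(\Omega)}\le C[u]_s$ (the paper cites~\cite[Theorem~6.7]{MR2944369} for this). No gap.
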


\begin{proof}
We can suppose that~$H\circ u\in L^1(\Omega)$. 
{F}rom Lemma~\ref{LEMMAANCVARRO} and~\eqref{Gge0},
\begin{equation}\label{dtfyguhinjk}
\begin{split}
J(u)&=\frac{1}{4}\iint_{\R^n\times\R^n} |u(x)-u(z)|^2K(x,z)\, dx\, dz +\int_{\Omega} \eta(x)H(u(x))\, dx-\int_\Omega \zeta(x) u(x)\, dx\\
&\ge \frac{\widetilde{C}}{4}[u]_s^2
+ \int_{\Omega} \eta(x)H(u)\, dx-\int_\Omega \zeta(x) u(x)\, dx\\
&\ge \frac{\tilde{C}}{4}[u]_s^2-\int_\Omega |\zeta(x)| |u(x)|\, dx,
\end{split}
\end{equation} for some~$\widetilde{C}>0$.

Now, by the H\"older inequality and~\cite[Theorem~6.7]{MR2944369}, there exists a constant~$\widehat{C}>0$, depending only on~$n$, $s$ and~$\Omega$, such that
\[
\int_\Omega |\zeta(x)| |u(x)|\, dx\le \|\zeta\|_{L^2(\Omega)} \|u\|_{L^2(\Omega)}\le \widehat{C}\|\zeta\|_{L^2(\Omega)} [u]_s.
\]
{F}rom this and~\eqref{dtfyguhinjk}, we deduce that
\[
J(u)\ge \frac{\widetilde{C}}{4}[u]_s^2 -\widehat{C}\|\zeta\|_{L^2(\Omega)} [u]_s.
\]
Hence, 
\[
\lim_{[u]_s\to+\infty}\frac{J(u)}{[u]_s}\ge \lim_{[u]_s\to+\infty}
\left(\frac{\widetilde{C}}{4}[u]_s -\widehat{C}\|\zeta\|_{L^2(\Omega)}\right)= +\infty,
\] as desired.
\end{proof}

With the work done so far, the proof of Theorem~\ref{thmminimo} can be completed as follows.

\begin{proof}[Proof of Theorem~\ref{thmminimo}]
Since~$J$ is weakly lower semicontinuous (thanks to Lemma~\ref{Lemmawlsc})
and
coercive (thanks to Lemma~\ref{lemmacoercivo})
and the space~$H^s_0(\Omega)$ is reflexive, by~\cite[Theorem~1.2]{MR2431434}, there exists a minimizer for the functional~$J$,
which is a weak solution of problem~\eqref{problemaGxuu} (recall~\eqref{ufeiwqgfwr8743y8hgh}).
\end{proof}

\subsection{Proof of Theorem~\ref{main2}}

We are now in the position to prove Theorem~\ref{main2}.

\begin{proof}[Proof of Theorem~\ref{main2}]
For any~$j\in\N$, we set
\begin{equation}\label{fnandefn}
f_j(x):= \dfrac{f(x)}{1+\frac{|f(x)|}{j}}\quad \mbox{ and }\quad a_j(x):= \dfrac{a(x)}{1+\frac{Q\, a(x)}{j}}.
\end{equation}
We observe that~$f_j$, $a_j \in L^\infty(\Omega)$ for any~$j\in\N$ and 
\begin{equation}\label{fnanlen}
\|f_j\|_{L^\infty(\Omega)}\le j \quad\mbox{ and }\quad \|a_j\|_{L^\infty(\Omega)}\le \frac{j}{Q}.
\end{equation}

In addition, setting
$$ \psi_j(t):=\frac{jt}{j+t},$$
one can check that~$\psi_j$ is nondecreasing, and therefore from~\eqref{GQcond} we deduce that
\begin{equation}\label{modulofn}
|f_j(x)| = \dfrac{|f(x)|}{1+\frac{|f(x)|}{j}} =\psi_j(|f(x)|)\le \psi_j(Qa(x))
=\dfrac{Q a(x)}{1+\frac{Q a(x)}{j}} = Q a_j(x).
\end{equation}

We now consider the approximated problems
\begin{equation}\label{Gprobapprox}
\begin{cases}
\mathcal L_{K} u + a_j \, h(u) = f_j &\mbox{ in } \Omega,\\
u=0 &\mbox{ in } \R^n\setminus\Omega,
\end{cases}
\end{equation}
and we observe that we are in the setting of Theorem~\ref{thmminimo} with~$\eta:=a_j$ and~$\zeta:= f_j$.
Consequently, for any~$j\in\N$, there exists a weak solution~$u_j$ of~\eqref{Gprobapprox},
that is, for any~$j\in\N$, there exists~$u_j\in H^s_0(\Omega)$ such that, for any~$v\in H^s_0(\Omega)$, 
\begin{equation}\label{Gweakn}
\begin{split}
&\frac12\iint_{\R^n\times\R^n} \big(u_j(x)-u_j(z)\big)\big(v(x)-v(z)\big)K(x,z)\, dx\, dz +\int_\Omega a_j(x) h(u_j(x))v(x)\, dx\\
&\qquad = \int_\Omega f_j(x) v(x)\, dx.
\end{split}
\end{equation}
 
Now, in light of the assumptions on~$h$ in~\eqref{ginftydef}, we have that the~$h$ is invertible in~$(-\gamma, \gamma)$. Hence, denoted by~$h^{-1}$ this inverse and recalling that~$Q\in(0,\gamma)$ (see~\eqref{GQcond}), we have
\begin{equation*}
k:= h^{-1}(Q)>0.
\end{equation*}
Recalling the definition in~\eqref{Gkdefn} and exploiting the assumptions on~$h$ in~\eqref{ginftydef}, 
\begin{equation}\label{GkunQ}
G_k (u_j):= \begin{cases}
0 &\mbox{ if } |h(u_j)|\le Q,\\
u_j-k &\mbox{ if } h(u_j)> Q,\\
u_j+k &\mbox{ if } h(u_j)<- Q.
\end{cases}
\end{equation}

Moreover, by Proposition~\ref{Gknellospazio}, we can use~$G_k (u_j)$ as a test function in~\eqref{Gweakn},
to obtain that
\begin{equation*}
\begin{split}
&\frac12\iint_{\R^n\times\R^n} \big(u_j(x)-u_j(z)\big)\big(G_k (u_j(x))-G_k (u_j(z))\big)K(x,z)\, dx\, dz\\&\qquad +\int_\Omega a_j(x) h(u_j(x))G_k (u_j(x))\, dx = \int_\Omega f_j(x) G_k (u_j(x))\, dx.
\end{split}
\end{equation*}
We also observe that, thanks to~\eqref{ginftydef} and~\eqref{gprop1}, 
\[
h(t) G_k(t) = |h(t) G_k(t)| \quad\mbox{ for any } t\in\R,
\]
and therefore
\begin{equation*}
\begin{split}
&\frac12\iint_{\R^n\times\R^n} \big(u_j(x)-u_j(z)\big)\big(G_k (u_j(x))-G_k (u_j(z))\big)K(x,z)\, dx\, dz \\&\qquad
+\int_\Omega a_j(x) |h(u_j(x))|\,|G_k (u_j(x))|\, dx = \int_\Omega f_j(x) G_k (u_j(x))\, dx.
\end{split}
\end{equation*}
{F}rom this and~\eqref{modulofn}, we thus deduce that
\begin{equation}\label{Gsxdcfvjnk}
\begin{split}
&\frac12\iint_{\R^n\times\R^n} \big(u_j(x)-u_j(z)\big)\big(G_k (u_j(x))-G_k (u_j(z))\big)K(x,z)\, dx\, dz\\
&\qquad +\int_\Omega a_j(x) |h(u_j(x))|\,|G_k (u_j(x))|\, dx \le
Q\int_\Omega a_j(x)\, |G_k (u_j(x))|\, dx.
\end{split}
\end{equation}

Moreover, from~\eqref{gprop3} and~\eqref{gprop2} we infer that
\begin{equation*}
\begin{split}
&\iint_{\R^n\times\R^n} |G_k (u_j(x)) -  G_k(u_j(z))|^2K(x,z)\, dx\, dz \\
\qquad&\le \iint_{\R^n\times\R^n} \big|G_k (u_j(x)) -  G_k(u_j(z))\big| \big|u_j(x)-u_j(z)\big|K(x,z)\, dx\, dz \\
\qquad&= \iint_{\R^n\times\R^n} \big(G_k (u_j(x)) -  G_k(u_j(z))\big) \big(u_j(x)-u_j(z)\big)K(x,z)\, dx\, dz.
\end{split}
\end{equation*}
Combining this with~\eqref{Gsxdcfvjnk}, we get that
\begin{eqnarray*}&&
\frac12\iint_{\R^n\times\R^n} \big|G_k (u_j(x))- G_k (u_j(z) )\big|^2K(x,z)\, dx\, dz\\&&\qquad
+\int_\Omega a_j(x) \big(|h(u_j(x))|-Q\big) |G_k (u_j(x))|\, dx \le 0.
\end{eqnarray*}
Thus, since~$G_k (u_j(x))=0$ if~$|h(u_j(x))|\le Q$
(recall the definition in~\eqref{GkunQ}), we conclude that
\begin{eqnarray*}&&
\frac12\iint_{\R^n\times\R^n} \big|G_k (u_j(x))- G_k (u_j(z) )\big|^2K(x,z)\, dx\, dz\\&&\qquad
+\int_{\Omega\cap \{ |h(u_j(x))|>Q\}} a_j(x) \big(|h(u_j(x))|-Q\big) |G_k (u_j(x))|\, dx \le 0.
\end{eqnarray*}
This and Lemma \ref{LEMMAANCVARRO} entail that~$G_k (u_j(x))=0$ for a.e.~$x\in\Omega$, and therefore
\begin{equation}\label{GleQ}
\|u_j\|_{L^\infty(\Omega)}\le h^{-1}(Q).
\end{equation}

We now claim that 
\begin{equation}\label{claimunbddHs}
\mbox{$u_j$ is bounded in~$H^s_0(\Omega)$}.
\end{equation}
Indeed, by~\eqref{fnandefn} and~\eqref{GleQ}, and being~$h$ strictly increasing
due to~\eqref{ginftydef},  
\begin{equation*}
\begin{split}
&|a_j(x) h(u_j(x)) u_j(x)|\le a(x)h(\|u_j\|_{L^\infty(\Omega)}) \|u_j\|_{L^\infty(\Omega)} \le a(x)\, Q\, h^{-1}(Q)\\
\mbox{and }\quad &|f_j(x) u_j(x)|\le |f(x)|\,h^{-1}(Q).
\end{split}
\end{equation*}
Thus, by using~$u_j\in H^s_0(\Omega)$ as a test function in~\eqref{Gweakn}, we find that
\begin{equation}\label{i06uyretfasjhdfqwtfetq}
\begin{split}&
\frac12 \iint_{\R^n\times\R^n} \big(u_j(x)-u_j(z)\big)^2
K(x,z)\, dx\, dz= \int_\Omega f_j(x) u_j(x)\,dx - \int_\Omega a_j(x) h\big(u_j(x)\big) u_j(x) \,dx\\
& \qquad\le h^{-1}(Q) \Big(\|f\|_{L^1(\Omega)} + Q \|a\|_{L^1(\Omega)}\Big).
\end{split}
\end{equation}
This and Lemma~\ref{LEMMAANCVARRO} entail~\eqref{claimunbddHs}.

By~\eqref{claimunbddHs}, there exists~$u\in H^s_0(\Omega)$ such that, up to subsequences, 
\begin{align}\label{weakconvOmega}
&u_j\rightharpoonup u \quad\mbox{ in } H^s_0(\Omega),\\ \label{aeinOmega}
{\mbox{and }}\quad &u_j\to u \quad\mbox{ a.e.  in } \Omega.
\end{align}
Moreover,  by~\eqref{GleQ},  
\begin{equation}\label{GuleQ}
\|u\|_{L^\infty(\Omega)}\le h^{-1}(Q).
\end{equation}
Also, by~\eqref{i06uyretfasjhdfqwtfetq},
\eqref{aeinOmega} and Fatou's Lemma, we infer that 
\begin{equation}\label{GuleQBIS}\begin{split}
\iint_{\R^n\times\R^n} |u(x)-u(z)|^2K(x,z)\,dx\, dz &\le \liminf_{j\to +\infty}\iint_{\R^n\times\R^n} |u_j(x)-u_j(y)|^2K(x,z) \,dx\, dz\\&\le 
2 h^{-1}(Q)\Big(\|f\|_{L^1(\Omega)} + Q \|a\|_{L^1(\Omega)}\Big).
\end{split}\end{equation}

Now, we check that~$u$ is a weak solution of problem~\eqref{davidnonlocal2}
according to Definition~\ref{defnweaksol}. For this, we take~$v\in H^s_0(\Omega)\cap L^\infty(\Omega)$ as a
test function in~\eqref{Gweakn} and we obtain that
\begin{equation}\label{Gweakn2}
\begin{split}
&\iint_{\R^n\times\R^n} \big(u_j(x)-u_j(z)\big)\big(v(x)-v(z)\big)K(x,z)\, dx \,dz +\int_\Omega a_j(x) h(u_j(x))v(x)\, dx\\
&\quad= \int_\Omega f_j(x) v(x)\, dx.
\end{split}
\end{equation}
The aim is now to pass to the limit as~$j\to+\infty$ the equation in~\eqref{Gweakn2}.
For this, we use~\eqref{GleQ}  and the properties of~$h$ in~\eqref{ginftydef}
to see that
\begin{align*}
&|a_j h(u_j) v|\le h(h^{-1}(Q)) a \|v\|_{L^\infty(\Omega)} =Q a \|v\|_{L^\infty(\Omega)} \in L^1(\Omega)\\
\mbox{ and }\quad & |f_j v | \le  f \|v\|_{L^\infty(\Omega)} \in L^1(\Omega).
\end{align*}
Hence, we employ the Dominated Convergence Theorem and 
the weak convergence in~\eqref{weakconvOmega} to pass to the limit as $j\to +\infty$ in~\eqref{Gweakn2} and deduce that $u$ satisfies
\begin{equation*}
\iint_{\R^n\times\R^n} \big(u(x)-u(z)\big)\big(v(x)-v(z)\big)K(x,z)\, dx \,dz +\int_\Omega a(x) h(u(x))v(x)\, dx= \int_\Omega f(x) v(x)\, dx,
\end{equation*} as desired.

Furthermore, the estimates in~\eqref{GLinftyu} and~\eqref{seminormaus}
follow from~\eqref{GuleQ} and~\eqref{GuleQBIS}.

We now show that the solution that we found is unique. To this aim, we take~$u_1$,  $u_2\in H^s_0(\Omega)\cap L^\infty(\Omega)$ solutions of~\eqref{davidnonlocal2}. We point out that~$u_1-u_2$ can be used
as a test function in the weak formulation of~\eqref{davidnonlocal2}. In this way,
one obtains that
\begin{eqnarray*}&&
\frac{1}{2}\iint_{\R^n\times\R^n} \big((u_1-u_2)(x)-(u_1-u_2)(z)\big)^2K(x,z)\, dx \,dz \\&&\qquad
+ \int_\Omega a(x) \big(h(u_1(x))-h(u_2(x))\big) (u_1(x)-u_2(x)) \, dx = 0.
\end{eqnarray*}
{F}rom the fact that~$a\ge0$, the assumptions on~$h$ in~\eqref{ginftydef} and Lemma \ref{LEMMAANCVARRO}, we deduce that~$u_1 =u_2$ a.e. in~$\Omega$.
\end{proof}

\begin{remark}
It is tempting to use the Schauder Fixed Point Theorem to prove that problem \eqref{problemaGxuu} admits a weak solution.
While this approach has been previously used in \cite{MR3304596, MR3760750, MR4145478}, some care is needed to make the full argument work.
Indeed, given $f\in L^2(\Omega)$ and $s\in(0,1]$, one can define $S:L^2(\Omega)\to H_0^s(\Omega)$ as the solution operator $u=S(f)$ of the problem
\begin{equation}\label{apppro1}
\begin{cases}
(-\Delta)^s u = f &\mbox{ in } \Omega\\
u=0 &\mbox{ in } \R^N\setminus\Omega.
\end{cases}
\end{equation}
By either classical or fractional regularity theory, it follows that $S$ is linear and continuous.

Then, one could be tempted to introduce the operator $T:H_0^s(\Omega)\to H_0^s(\Omega)$ defined as
\begin{equation}\label{Tdefn}
T(u):= S(\zeta-\eta\,h(u)).
\end{equation}
With this choice, $T(u)$ would be the solution of the problem \eqref{apppro1} with $f:=\zeta-\eta\,h(u)$.

At a first glance, this seems a convenient setting for the application of the Schauder Fixed Point Theorem, up to showing that
$T$ is continuous, compact and leaves a suitable closed ball invariant.

However,  even if $\eta\in L^\infty(\Omega)$ and $\zeta\in L^2(\Omega)$, the assumptions on the function $h$ are, in general, not strong enough to guarantee that $\zeta-\eta\,h(u)\in L^2(\Omega)$
(the issues arising when $h$ grows faster than linear at infinity: 
for example, if $\zeta:=0$, $\eta:=1$, $h(t):=t^3$, $u(x):= |x|^{-\frac13}$ and $\Omega\ni0$, then $\zeta-\eta\,h(u)
\not\in L^2(\Omega)$). This caveat prevents one from defining the operator $T$ and makes the Schauder Fixed Point Theorem unavailable in this situation (likely, a two-step approach would be needed in cases like this, based first on a truncation of the function $h$ with a
strictly increasing and bounded function and on
a subsequent limit argument).
\end{remark}

\section{Asymptotic behavior of nonlocal solutions and proof of Theorem~\ref{main3}}\label{sectionlimitelocale}

In this section we show that the unique solution of problem~\eqref{davidnonlocal2MS}, obtained through Theorem~\ref{main2},
converges as~$s\nearrow1$ to a function which is
the unique solution of problem~\eqref{problemalocale}.
The existence of this classical solution was obtained in~\cite[Theorem~2.1]{MR4145478}, but
our proof does not need to assume this result and can be also seen as a different approach to the classical
theory, which is now recovered as a byproduct of a limit case.
In addition, the stability of the solutions of Theorem~\ref{main2} as~$s\nearrow1$
is interesting in itself, since it relies on a series of uniform estimates which do not come
from the standard regularity theory.

For the sake of convenience,  for any~$s\in (0,1)$, we denote by~$u_s\in H^s_0(\Omega)\cap L^\infty(\Omega)$ the unique weak solution of problem~\eqref{davidnonlocal2MS} and by~$u_1\in H_0^1(\Omega)\cap L^\infty(\Omega)$ the unique weak solution of problem~\eqref{problemalocale}.

Now we show that, given~$u\in H^s_0(\Omega)$, we can decrease the seminorm of~$u$ by performing a mollification procedure.

\begin{lemma}\label{lemmasemimoll}
Let~$s\in (0, 1)$ and~$u\in H^s_0(\Omega)$. Let~$\eta\in C^\infty_c(B_1)$ with~$\int_{\R^n} \eta(x) \,dx =1$. 

For any~$\varepsilon>0$, let
\begin{equation}\label{mollifier1}
\eta_\varepsilon (x):=\varepsilon^{-n} \eta\left(\frac{x}{\varepsilon}\right)\qquad{\mbox{and }}\qquad u_\varepsilon(x):= (\eta_\varepsilon\ast u)(x).
\end{equation}

Then, 
\[
[u_\varepsilon]_s\le [u]_s.
\]
\end{lemma}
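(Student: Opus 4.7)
The plan is to use the integral representation of the Gagliardo seminorm, apply Jensen's inequality pointwise to $u_\varepsilon(x)-u_\varepsilon(y)$, and then change variables and invoke Fubini to restore the integral defining $[u]_s^2$.

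More concretely, starting from~\eqref{gagliardoseminorm}, I would write
\begin{equation*}
u_\varepsilon(x)-u_\varepsilon(y)=\int_{\R^n}\eta_\varepsilon(z)\bigl(u(x-z)-u(y-z)\bigr)\,dz.
\end{equation*}
Since $\eta_\varepsilon\ge0$ and $\int_{\R^n}\eta_\varepsilon(z)\,dz=1$, Jensen's inequality (equivalently, Cauchy--Schwarz with the probability measure $\eta_\varepsilon\,dz$) applied to the squared absolute value gives
\begin{equation*}
|u_\varepsilon(x)-u_\varepsilon(y)|^2\le\int_{\R^n}\eta_\varepsilon(z)\,|u(x-z)-u(y-z)|^2\,dz.
\end{equation*}

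Multiplying by $|x-y|^{-n-2s}$, integrating in $(x,y)\in\R^n\times\R^n$, and applying Tonelli's Theorem to interchange the order of integration (the integrand is nonnegative), one obtains
\begin{equation*}
[u_\varepsilon]_s^2\le c_{n,s}\int_{\R^n}\eta_\varepsilon(z)\left(\iint_{\R^n\times\R^n}\frac{|u(x-z)-u(y-z)|^2}{|x-y|^{n+2s}}\,dx\,dy\right)dz.
\end{equation*}
For each fixed $z$, the change of variables $x':=x-z$, $y':=y-z$ leaves the denominator $|x-y|^{n+2s}=|x'-y'|^{n+2s}$ unchanged and transforms the inner double integral exactly into $[u]_s^2/c_{n,s}$. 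Since $\int_{\R^n}\eta_\varepsilon(z)\,dz=1$, the desired bound $[u_\varepsilon]_s\le[u]_s$ follows.

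There is no real obstacle here: the only technical point is the justification of Jensen's inequality and the interchange of integrals, both of which are immediate because the measures are finite/nonnegative and $u\in H^s(\R^n)$ guarantees finiteness of $[u]_s$. The argument in fact works verbatim for any $u\in H^s(\R^n)$, so the hypothesis that $u\in H^s_0(\Omega)$ is not used in this lemma (it will be used in its subsequent application).
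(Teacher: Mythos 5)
Your proof is correct and is essentially the same as the paper's: both write $u_\varepsilon(x)-u_\varepsilon(y)$ as a convolution, apply Jensen's inequality with respect to the (assumed nonnegative, unit-mass) weight $\eta_\varepsilon$, interchange the order of integration, and exploit the translation invariance of $|x-y|^{-n-2s}\,dx\,dy$. You spell out the Tonelli step and the change of variables a bit more explicitly than the paper, and you correctly observe that nonnegativity of $\eta$ is needed for Jensen (an implicit standard mollifier assumption that the paper also relies on without stating), but there is no substantive difference.
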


\begin{proof}
By definition, we have that~$\eta_\varepsilon\in C^\infty(\Omega)$, $\mbox{supp}(\eta_\varepsilon)\subset B_\varepsilon$ and 
\begin{equation*}
\int_{\R^n} \eta_\varepsilon(x) \,dx =1.
\end{equation*}
Hence, recalling the definition in~\eqref{gagliardoseminorm} and using the Jensen inequality, we get
\[
\begin{split}&
[u_\varepsilon]^2_s = \iint_{\R^n\times\R^n} \frac{|u_\varepsilon(x)-u_\varepsilon(y)|^2}{|x-y|^{n+2s}} \,dx \, dy\le \iint_{\R^n\times\R^n}\left(\,\int_{\R^n}\frac{|u(x-z)-u(y-z)|^2}{|x-y|^{n+2s}} \eta_\varepsilon(z) \,dz\right)\,dx\, dy\\
&\qquad\quad=\iint_{\R^n\times\R^n} \frac{|u(x)-u(y)|^2}{|x-y|^{n+2s}} \,dx\, dy \left(\,\int_{\R^n} \eta_\varepsilon(z)\, dz\right) = [u]^2_s,
\end{split}
\]
as desired.
\end{proof}

We are now in the position to prove Theorem~\ref{main3}.

\begin{proof}[Proof of Theorem~\ref{main3}]
We will show that
every sequence of solutions of the nonlocal problem~\eqref{davidnonlocal2MS} is precompact and possesses a subsequence
converging to the unique solution of
the classical problem~\eqref{problemalocale}, which in turn implies 
that all sequences actually converge to the same classical solution, thus yielding the desired result.

We consider a sequence of weak solutions $u_{s_j}$
of~\eqref{davidnonlocal2MS} with~$s_j$ approaching~$1$ as~$j\to+\infty$.
Without loss of generality, we can assume that~$s_j\in (1/2, 1)$.

By~\eqref{GLinftyu} and~\eqref{seminormaus}, we have that
\begin{equation}\label{oufewyfi6658oyewoiygoihy}
\|u_{s_j}\|_{L^\infty(\Omega)}\le h^{-1}(Q)\end{equation}
and \begin{equation*}
c_{n,s_j}\iint_{\R^n\times B_\varrho(x)} \frac{\big(u(x) - u(z)\big)^2}{|M(z,x-z)(x-z)|^{n+2s_j}}\,dx\, dz
\le 2h^{-1}(Q) \,\big(\|f\|_{L^1(\Omega)} + Q \|a\|_{L^1(\Omega)}\big).
\end{equation*}
{F}rom this, the assumption~\eqref{matdefpos} and Lemma~\ref{LEMMAANCVARRO}, we also see that
\begin{equation*}
[u_{s_j}]_{s_j}^2
\le 2C_{\alpha,\beta,n} h^{-1}(Q) \,\big(\|f\|_{L^1(\Omega)} + Q \|a\|_{L^1(\Omega)}\big).
\end{equation*}

We now take~$\overline{s}\in(1/2,1)$. By~\cite[Lemma~2.1]{MR4736013}, we thereby find that there exists a positive constant~$C$, only depending on~$n$ and~$\Omega$, such that, when~${s_j}\in(\overline{s}, 1)$,
\begin{equation}\label{rtcyivuobinjokmpl,}
[u_{s_j}]^2_{\overline s}\le C [u_{s_j}]^2_{s_j} \le 2C\,C_{\alpha,\beta,n} h^{-1}(Q) \,\big(\|f\|_{L^1(\Omega)} + Q \|a\|_{L^1(\Omega)}\big)=:C_1,
\end{equation}
namely the sequence~$u_{s_j}$ is bounded in~$H^{\overline s}_0(\Omega)$, uniformly in~$j$.

Thus, we can extract a subsequence~$s_{j_k}$ for which
there exists~$w\in H_0^{\overline s}(\Omega)$ such that, as~$k\to+\infty$,
\begin{alignat}{2}\nonumber
&u_{s_{j_k}}\rightharpoonup w &&\mbox{ in } H^{\overline s}(\R^n)\\ \label{convL222}
& u_{s_{j_k}} \to w &&\mbox{ in } L^2(\R^n), \\ \label{usaetou}
\mbox{ and }\quad & u_{s_{j_k}} \to w &&\mbox{ a.e. in } \R^n
\end{alignat}
and (see e.g.~\cite[Theorem~4.9]{MR2759829}) there exist~$g\in L^2(\Omega)$ such that
\begin{equation}\label{QUELLA}
|u_{s_{j_{k}}}(x)|\le g(x)\end{equation} for any~$k\in\N$ and for a.e.~$x\in \Omega$.

We will actually show below that~$w$ is the solution~$u_1$ of the classical problem~\eqref{problemalocale}.

By~\eqref{rtcyivuobinjokmpl,}, \eqref{usaetou} and Fatou's Lemma, we conclude that
\begin{equation}\label{ionkmdaslòmdsalk}
[w]^2_{\overline s}\le C_1.
\end{equation}
Moreover, we remark that~\eqref{oufewyfi6658oyewoiygoihy} entails
that
\begin{equation}\label{QUESTA}
\|w\|_{L^\infty(\Omega)}\le h^{-1}(Q).\end{equation}

We now claim that
\begin{equation}\label{claimH1}
\|\nabla w\|_{L^2(\R^n)}\le C_1.
\end{equation}
To prove this, we define
\[
w_{\varepsilon}(x):= (\eta_\varepsilon\ast w)(x),
\]
being~$\eta_\varepsilon$ as in~\eqref{mollifier1}.
By~\cite[Appendix~C4, Theorem~6]{MR1625845}, we have that
\begin{equation}\label{limitepsto0}
\lim_{\varepsilon\searrow 0} w_{\varepsilon} (x)= w(x)\quad{\mbox{ for a.e. }}x\in\R^n.
\end{equation}

Moreover, \eqref{QUESTA} gives that
\begin{equation}\label{hkwdhew8437gfks87654}
\|w_{\varepsilon}\|_{L^\infty(\R^n)}\le \|w\|_{L^\infty(\R^n)}\le h^{-1}(Q).
\end{equation}
Also, in light of Lemma~\ref{lemmasemimoll} and~\eqref{ionkmdaslòmdsalk},
\[
[w_{\varepsilon}]^2_{\overline s} \le [w]^2_{\overline s}\le C_1,
\]
and therefore
\begin{equation}\label{1s3-wrifpg-0}
\lim_{\overline s\nearrow 1}[w_{\varepsilon}]^2_{\overline s}\le \lim_{\overline s\nearrow 1} [w]^2_{\overline s}\le C_1.
\end{equation}

Moreover, since~$w_{\varepsilon}\in C^\infty_c(\R^n)$, we have that
\[
\lim_{\overline s\nearrow 1}[w_{\varepsilon}]^2_{\overline s} = \|\nabla w_{\varepsilon}\|^2_{L^2(\R^n)}.
\]
Combining this and~\eqref{1s3-wrifpg-0}, we deduce that
\[
\|\nabla w_{\varepsilon}\|^2_{L^2(\R^n)}\le C_1.\]

{F}rom this, \eqref{limitepsto0} and~\eqref{hkwdhew8437gfks87654}, for an infinitesimal subsequence~$\varepsilon_\ell$,
\[
w_{\varepsilon_\ell}\rightharpoonup w \mbox{ in } H^1(\R^n).
\]
This shows that~$w\in H^1(\R^n)$ and 
\[
\|\nabla w\|_{L^2(\R^n)}^2 \le \liminf_{\ell\to+\infty} \|\nabla w_{\varepsilon_\ell}\|_{L^2(\R^n)}^2 \le C_1,
\]
which establishes~\eqref{claimH1}.  

From~\eqref{claimH1}, since $w\in H_0^{\overline s}(\Omega)$, we infer that $w\in H^1_0(\Omega)$.

We now show that~$w$ is a weak solution (and, in fact, the unique  solution, due to the comparison principle)
of~\eqref{problemalocale}, namely, for any~$\varphi\in C^\infty_c(\R^n)$, $w$ satisfies
\begin{equation}\label{uastsolution}
\int_\Omega A(x) \nabla w(x)\cdot\nabla \varphi(x) \,dx = \int_\Omega \big(f(x) - a(x) h(w(x))\big)\varphi(x) \,dx.
\end{equation} 
To prove this, we notice that by~\eqref{ginftydef} and~\eqref{GLinftyu}, and since~$a\in L^1(\Omega)$,
\[
|a(x) h(u_{s_{j_k}})\varphi(x)|\le Q\|\varphi\|_{L^\infty(\R^n)} a(x)\in L^1(\Omega).
\]
Hence, from this, \eqref{usaetou} and the Dominated Convergence Theorem, we infer that
\begin{equation}\label{convergenzadx}
\lim_{k\to+\infty}\int_\Omega a(x) h(u_{s_{j_k}}(x))\varphi(x) \,dx = \int_\Omega a(x) h(w(x))\varphi(x) \,dx.
\end{equation}

Moreover, Lemma~\ref{apkdjoscvndfSHDKB<qrg4596uyh43io},
Corollary~\ref{PSjmndwqoeutr98n76bv9m0v04b8n9m-39tee45ndsh76y} and~\eqref{QUELLA} give that
\[
\big|(u_{s_{j_{k}}}(x)-w(x))  \mathcal L_{M,\varrho,{s_{j_{k}}}} \varphi(x)\big|\le\widetilde C_M\,\big(g(x)+|w(x)|\big)\,\|\varphi\|_{C^2(\R^n)},
\]for some~$\widetilde C_M>0$ independent of~$s$,
and the latter function belongs to~$L^1(\Omega)$.

Accordingly, by~\eqref{usaetou}, 
$$ \lim_{k\to+\infty}\int_{\R^n} (u_{s_{j_{k}}}(x)-w(x))\, \mathcal L_{M,\varrho,{s_{j_{k}}}} \varphi(x) \,dx=0.$$
Thus, by Proposition~\ref{Proplimito1} (which can be used here because~$w\in H^1(\R^n)$, thanks to~\eqref{QUESTA} and~\eqref{claimH1}),
\begin{eqnarray*}
&&\lim_{k\to+\infty}\frac{c_{n, {s_{j_{k}}}}}{2}\iint_{\R^n\times B_\varrho} \frac{(u_{s_{j_{k}}}(x)-u_{s_{j_{k}}}(x-y))(\varphi(x)-\varphi(x-y))}{|M(x-y, y) y|^{n+2{s_{j_{k}}}}} \,dx\, dy
\\&&\qquad=
\lim_{k\to+\infty}\int_{\R^n} u_{s_{j_{k}}}(x) \mathcal L_{M,\varrho,{s_{j_{k}}}} \varphi(x) \,dx=
\lim_{k\to+\infty}\int_{\R^n} w(x) \mathcal L_{M,\varrho,{s_{j_{k}}}} \varphi(x) \,dx \\
&&\qquad =\lim_{k\to+\infty}\frac{c_{n, {s_{j_{k}}}}}{2}\iint_{\R^n\times B_\varrho} \frac{(w(x)-w(x-y))(\varphi(x)-\varphi(x-y))}{|M(x-y, y) y|^{n+2{s_{j_{k}}}}} \,dx\, dy
\\&&\qquad= \int_{\R^n} A(x) \nabla w(x)\cdot \nabla\varphi(x) \,dx,
\end{eqnarray*}
being~$A(x)$ as in~\eqref{aijdefn}.  

Combining this and~\eqref{convergenzadx} with the fact that~$u_{s_{j_{k}}}$ is a weak solution of~\eqref{davidnonlocal2MS},
we obtain the desired claim in~\eqref{uastsolution}.

The estimates in~\eqref{GLinftyuBIS} follow from~\eqref{QUESTA} and~\eqref{claimH1}.\end{proof}

\section{Tailoring a matrix $M$ satisfying~\eqref{aijdefn} and proof of Theorems~\ref{THMFROMATOM}
and~\ref{fuweioifhqweio7564}}\label{SEC3}

\subsection{The algebraic roadmap leading to Theorem~\ref{THMFROMATOM}
and proof of Lemma~\ref{lemma3}}
We start by explicitly computing the following integral.

\begin{lemma}\label{lemma1}
Let~$\sigma_1, \dots, \sigma_n >0$. 
Then, for any~$i\in\{1,\dots, n\}$,
\[
\int_{\mathcal S^{n-1}}\frac{ \varphi_i^2}{\displaystyle \left(\sum_{k=1}^n \sigma^2_k \varphi^2_k\right)^{\frac{n+2}{2}}} \,
d\mathcal H_{\varphi}^{n-1} = \dfrac{V_{\mathbb E^n}}{\sigma_i^2},
\]
where
\begin{equation*}
V_{\mathbb E^n}:= \frac{\pi^{\frac{n}{2}}}{\Gamma\left(\frac{n+2}{2}\right) \sigma_1\dots\sigma_n}
\end{equation*}
denotes the volume of the~$n$-dimensional hyperellipsoid with semiaxes~$\frac{1}{\sigma_1},\dots,\frac{1}{\sigma_n}$.
\end{lemma}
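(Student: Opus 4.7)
The plan is to convert the weighted spherical integral into a Gaussian integral over $\R^n$, which becomes trivially computable after an anisotropic rescaling.

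First, I would introduce a Gamma function representation to eliminate the denominator. Specifically, using
\[
\int_0^{+\infty} r^{n+1} e^{-\lambda r^2}\,dr \;=\; \frac{\Gamma\left(\frac{n+2}{2}\right)}{2\,\lambda^{(n+2)/2}}
\]
with the choice $\lambda:=\sum_{k=1}^n \sigma_k^2 \varphi_k^2>0$, we may write
\[
\frac{1}{\left(\sum_{k=1}^n\sigma_k^2\varphi_k^2\right)^{(n+2)/2}}=\frac{2}{\Gamma\left(\frac{n+2}{2}\right)}\int_0^{+\infty} r^{n+1} e^{-r^2\sum_k \sigma_k^2\varphi_k^2}\,dr.
\]
Inserting this into the surface integral and applying Fubini, one obtains
\[
\int_{\mathcal S^{n-1}}\frac{\varphi_i^2}{\left(\sum_k \sigma_k^2\varphi_k^2\right)^{(n+2)/2}}\,d\mathcal H^{n-1}_\varphi=\frac{2}{\Gamma\left(\frac{n+2}{2}\right)}\int_{\mathcal S^{n-1}}\int_0^{+\infty} r^{n+1}\varphi_i^2\, e^{-r^2\sum_k \sigma_k^2\varphi_k^2}\,dr\,d\mathcal H^{n-1}_\varphi.
\]

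Next, I would recognize the inner double integral as a Cartesian one via polar coordinates. Writing $x=r\varphi$ so that $x_i=r\varphi_i$ and $dx=r^{n-1}\,dr\,d\mathcal H^{n-1}_\varphi$, the factor $r^{n+1}\varphi_i^2$ equals $r^{n-1}x_i^2$, and $r^2\sum_k\sigma_k^2\varphi_k^2=\sum_k \sigma_k^2 x_k^2$. Thus
\[
\int_{\mathcal S^{n-1}}\int_0^{+\infty} r^{n+1}\varphi_i^2\,e^{-r^2\sum_k \sigma_k^2\varphi_k^2}\,dr\,d\mathcal H^{n-1}_\varphi=\int_{\R^n} x_i^2\,e^{-\sum_k \sigma_k^2 x_k^2}\,dx.
\]
Now the key decoupling: perform the change of variables $y_k:=\sigma_k x_k$, whose Jacobian is $(\sigma_1\cdots\sigma_n)^{-1}$, to obtain
\[
\int_{\R^n} x_i^2\,e^{-\sum_k \sigma_k^2 x_k^2}\,dx=\frac{1}{\sigma_i^2\,\sigma_1\cdots\sigma_n}\int_{\R^n} y_i^2\,e^{-|y|^2}\,dy.
\]

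Finally, the last integral factors. Using the one-dimensional identities $\int_\R y_i^2 e^{-y_i^2}\,dy_i=\frac{\sqrt\pi}{2}$ and $\int_\R e^{-y_k^2}\,dy_k=\sqrt\pi$ for $k\ne i$, one gets $\int_{\R^n} y_i^2\,e^{-|y|^2}\,dy=\frac{\pi^{n/2}}{2}$. Combining all the pieces yields
\[
\int_{\mathcal S^{n-1}}\frac{\varphi_i^2}{\left(\sum_k \sigma_k^2\varphi_k^2\right)^{(n+2)/2}}\,d\mathcal H^{n-1}_\varphi=\frac{2}{\Gamma\left(\frac{n+2}{2}\right)}\cdot\frac{\pi^{n/2}}{2\,\sigma_i^2\,\sigma_1\cdots\sigma_n}=\frac{V_{\mathbb E^n}}{\sigma_i^2},
\]
as desired. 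There is no real obstacle in this argument; the only delicate point is bookkeeping the constants in the Gamma identity and the Jacobian so that they reproduce exactly the stated ellipsoid volume $V_{\mathbb E^n}=\pi^{n/2}/[\Gamma((n+2)/2)\sigma_1\cdots\sigma_n]$.
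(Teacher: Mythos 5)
Your proof is correct, and it takes a genuinely different route from the paper's. The paper argues directly in spherical coordinates: it parametrizes $\mathcal S^{n-1}$ explicitly, defines nested quantities $\alpha_j$ in~\eqref{aidefn}, and peels off one angular variable at a time via an induction on the index $m$ using the integral identity~\eqref{4.5.gen} from Gradshteyn--Ryzhik, finishing with a separate treatment of the last angle. Your argument instead uses the Mellin-type representation
\[
\frac{1}{\lambda^{(n+2)/2}}=\frac{2}{\Gamma\left(\frac{n+2}{2}\right)}\int_0^{+\infty}r^{n+1}e^{-\lambda r^2}\,dr
\]
to promote the spherical integral to a Cartesian Gaussian integral over $\R^n$ (via $x=r\varphi$, which contributes the missing radial factor $r^{n-1}$), and then diagonalizes it with the linear change of variables $y_k=\sigma_k x_k$. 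All the constants check out: the change of variables yields the factor $(\sigma_i^2\,\sigma_1\cdots\sigma_n)^{-1}$, the Gaussian moment gives $\pi^{n/2}/2$, and the $2$ from the Gamma identity cancels, reproducing $V_{\mathbb E^n}/\sigma_i^2$. Your route is shorter, avoids any induction and any reference to table integrals, and the interchange of integration is legitimate by Tonelli since the integrand is nonnegative; it also generalizes transparently (any homogeneous polynomial in $\varphi$ of the right degree in place of $\varphi_i^2$ can be handled the same way). The paper's approach has the virtue of being fully explicit in the spherical parametrization, which may mesh better with other spherical-coordinate computations appearing elsewhere in the argument, but your proof is a clean and valid replacement.
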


\begin{proof} Up to reordering indices, we may suppose that~$i=1$.
We recall the system of spherical coordinates
(see e.g.~\cite[equation~(*) on page~65]{MR1530579}) given by
\begin{equation*}
\begin{split}
&\varphi_1= \cos x_1,\\
&\varphi_{j}= \cos x_j\,\prod_{k=1}^{j-1}\sin x_k,\qquad{\mbox{for all }}j\in\{2,\dots,n-2\},\\
&\varphi_{n-1}= \sin x_{n-1}\,\prod_{k=1}^{n-2}\sin x_k,\\
&\varphi_{n}= \cos x_{n-1}\,\prod_{k=1}^{n-2}\sin x_k,
\end{split}
\end{equation*}
with~$x_1,\dots,x_{n-2}\in[0,\pi]$ and~$x_{n-1}\in[0,2\pi]$.

The Jacobian determinant of this transformation
(see e.g.~\cite[page~66]{MR1530579}) can be written as 
\[
J = \prod_{k=1}^{n-2} \sin^{k}x_{n-1-k}.
\]
In addition, for any~$j\in\{1,\dots, n-1\}$, we set 
\begin{equation}\label{aidefn}
\alpha_j := \sqrt{\frac{\displaystyle\sum_{k=j+1}^{n} \sigma^2_k \varphi^2_k}{\displaystyle\prod_{k=1}^{j} \sin^2 x_k}}. 
\end{equation}
Using the short notation~$x^{(j)}:=(x_j,\dots,x_{n-1})$, we point out that
\begin{equation}\label{AODSCwojf-INAS}
{\mbox{$\alpha_j$ depends only on~$ x^{(j+1)}$.}}
\end{equation}

In this way, we have that~$\alpha_1$ does not depend on~$x_1$ and consequently
\begin{equation}\label{gyvubhijnkmol,}
\begin{split}
&{\mathcal{I}}:=\int_{\mathcal S^{n-1}}\frac{ \varphi_1^2 }{ \left(\displaystyle\sum_{k=1}^n \sigma^2_k \varphi^2_k\right)^{\frac{n+2}{2}}} \,d\mathcal H_{\varphi}^{n-1} = \int_{\mathcal S^{n-1}} \frac{\varphi^2_1 }{\left(\displaystyle
\sigma^2_1 \varphi^2_1+\sum_{{k=2}}^n \sigma^2_k\,\varphi_k^2 \right)^{\frac{n+2}{2}}}\, d\mathcal H^{n-1}_\varphi\\
&\qquad=\int_{[0,\pi]^{n-2}\times[0,2\pi]} \frac{\cos^2 x_1}{
\left(
\sigma^2_1 \cos^2 x_1
+ \alpha_1^2\sin^2 x_1\right)^{\frac{n+2}{2}}}\;
\prod_{k=1}^{n-2} \sin^{k}x_{n-1-k}\,dx^{(1)}.
\end{split}
\end{equation}

Now we claim that, for all~$m\in\{1,\dots,n-2\}$,
\begin{equation}\label{MAJ:QWDFV}
{\mathcal{I}}=\frac{\pi^{\frac{m}2}}{2\sigma_1^2\,\displaystyle\prod_{k=1}^m\sigma_k}\,
\frac{\Gamma\left(\frac{n-m}2\right)}{\Gamma\left(\frac{n+2}2\right)}
\int_{[0,\pi]^{n-m-2}\times[0,2\pi]}\frac1{\alpha_m^{n-m}}
\prod_{k=1}^{n-m-2} \sin^{k}x_{n-1-k}\,dx^{(m+1)}.
\end{equation}
Indeed, by~\cite[3.642, n.1, page~403]{MR3307944}, if~$a$, $b$, $\nu$, $\mu\in(0,+\infty)$ with~$2\nu-1\in2\N$,
\begin{equation}\label{4.5.gen}
\begin{split}
\int_{[0,\pi]} \frac{\cos^{2\nu-1} \tau\, \sin^{2\mu-1}\tau}{
\left(
b^2\cos^2 \tau
+ a^2\sin^2 \tau\right)^{\mu+\nu}}\,\,d\tau&
=2
\int_{0}^{\frac\pi2}  \frac{\cos^{2\nu-1} \tau\, \sin^{2\mu-1}\tau}{
\left(
b^2 \cos^2 \tau
+ a^2\sin^2 \tau\right)^{\mu+\nu}}\,d\tau\\
&= \frac{\Gamma\left(\nu\right)\,\Gamma\left(\mu\right)}{ a^{2\mu} b^{2\nu}\,\Gamma\left(\mu+\nu\right)}.
\end{split}
\end{equation}
By using this 
(with~$a:=\alpha_1$, $b:=\sigma_1$,
$\mu:=(n-1)/2$ and~$\nu:=3/2$, on the integration variable~$\tau:=x_1$), we find that
$$\int_{[0,\pi]} \frac{\cos^{2} x_1\, \sin^{n-2}x_1}{
\left(
\sigma_1^2\cos^2 x_1
+ \alpha_1^2\sin^2 x_1\right)^{\frac{n+2}2}}\,\,dx_1=
\frac{\sqrt{\pi}}{2 \alpha_1^{n-1} \sigma_1^{3}} \frac{\Gamma\left(\frac{n-1}2\right)}{\Gamma\left(\frac{n+2}2\right)}.$$
Plugging this information
into~\eqref{gyvubhijnkmol,}, we conclude that
\begin{equation*}
{\mathcal{I}}=
 \frac{\sqrt{\pi}}{2 \sigma^3_1} \frac{\Gamma\left(\frac{n-1}{2}\right)}{\Gamma\left(\frac{n+2}{2}\right)}
\int_{[0,\pi]^{n-3}\times[0,2\pi]}
\frac{1}{\alpha_1^{n-1}}
\;
\prod_{k=1}^{n-3} \sin^{k}x_{n-1-k}\,dx^{(2)}.\end{equation*}
This proves~\eqref{MAJ:QWDFV} with~$m=1$.

Now we argue recursively, supposing that~\eqref{MAJ:QWDFV} holds true for the index~$m$,
with~$m\le n-3$, and we establish it for the index~$m+1$. To accomplish this goal, we point out that, for all~$j\in\{1,\dots,n-3\}$,
\begin{equation*}
\begin{split}&
\alpha_j^2= \frac{\displaystyle\sum_{k=j+1}^{n} \sigma^2_k \varphi^2_k}{\displaystyle\prod_{k=1}^{j} \sin^2 x_k}
=\frac{ \sigma^2_{j+1} \varphi^2_{j+1}+
\displaystyle\sum_{k=j+2}^{n} \sigma^2_k \varphi^2_k}{\displaystyle\prod_{k=1}^{j} \sin^2 x_k}=
\frac{ \sigma^2_{j+1} \varphi^2_{j+1}+\alpha_{j+1}^2
\displaystyle\displaystyle\prod_{k=1}^{j+1} \sin^2 x_k }{\displaystyle\prod_{k=1}^{j} \sin^2 x_k}\\&\qquad\qquad=
\sigma_{j+1}^2\cos^2 x_{j+1}+\alpha_{j+1}^2\sin^2 x_{j+1}.
\end{split}\end{equation*}
This and the inductive hypothesis yield
\begin{equation}\label{DFDvb-09o4wkhgb90wriuhjnp323wed6ns8oh}
\begin{split}
{\mathcal{I}}&=\frac{\pi^{\frac{m}2}}{2\sigma_1^2\,\displaystyle\prod_{k=1}^m\sigma_k}\,
\frac{\Gamma\left(\frac{n-m}2\right)}{\Gamma\left(\frac{n+2}2\right)}\\&\quad\times
\int_{[0,\pi]^{n-m-2}\times[0,2\pi]}\frac1{\left( 
\sigma_{m+1}^2\cos^2 x_{m+1}+\alpha_{m+1}^2\sin^2 x_{m+1}
\right)^{\frac{n-m}2}}
\prod_{k=1}^{n-m-2} \sin^{k}x_{n-1-k}\,dx^{(m+1)}.
\end{split}\end{equation}

Besides, by~\eqref{AODSCwojf-INAS}, $\alpha_{m+1}$ depends only on~$ x^{(m+2)}$
and we can thereby utilize~\eqref{4.5.gen} (here with~$a:=\alpha_{m+1}$, $b:=\sigma_{m+1}$,
$\mu:=\frac{n-m-1}{2}$ and~$\nu:=1/2$) and we see that
\begin{equation*}
\begin{split}
\int_{[0,\pi]} \frac{ \sin^{n-m-2}x_{m+1}}{
\left(
\sigma_{m+1}^2\cos^2 x_{m+1}
+ \alpha_{m+1}^2\sin^2 x_{m+1}\right)^{\frac{n-m}2}}\,\,dx_{m+1}
= \frac{\sqrt{\pi}}{ \alpha_{m+1}^{n-m-1} \sigma_{m+1}} \frac{\Gamma\left(\frac{n-m-1}2\right)}{\Gamma\left(\frac{n-m}2\right)}.
\end{split}
\end{equation*}

{F}rom this and~\eqref{DFDvb-09o4wkhgb90wriuhjnp323wed6ns8oh} we arrive at
\begin{equation*}
\begin{split}
{\mathcal{I}}&=\frac{\pi^{\frac{m}2}}{2\sigma_1^2\,\displaystyle\prod_{k=1}^m\sigma_k}\,
\frac{\Gamma\left(\frac{n-m}2\right)}{\Gamma\left(\frac{n+2}2\right)}
\frac{\sqrt{\pi}}{ \sigma_{m+1}} \frac{\Gamma\left(\frac{n-m-1}2\right)}{\Gamma\left(\frac{n-m}2\right)}
\\&\qquad\times
\int_{[0,\pi]^{n-m-3}\times[0,2\pi]}\frac{1}{\alpha_{m+1}^{n-m-1}}
\prod_{k=1}^{n-m-3} \sin^{k}x_{n-1-k}\,dx^{(m+2)}
,\end{split}\end{equation*}
which gives~\eqref{MAJ:QWDFV} for the index~$m+1$. This completes the inductive step and establishes~\eqref{MAJ:QWDFV}.

We also remark that
\begin{eqnarray*}
\alpha_{n-2}^2=\frac{\displaystyle\sum_{k=n-1}^{n} \sigma^2_k \varphi^2_k}{\displaystyle\prod_{k=1}^{n-2} \sin^2 x_k}=
\frac{\sigma^2_{n-1} \varphi^2_{n-1}+\sigma^2_{n} \varphi^2_{n}}{\displaystyle\prod_{k=1}^{n-2} \sin^2 x_k}=\sigma^2_{n-1}\sin^2x_{n-1}+\sigma^2_{n}\cos^2 x_{n-1}.
\end{eqnarray*}
Thus, we deduce from~\eqref{MAJ:QWDFV} (used with~$m:=n-2$) that
\begin{equation}\label{2P3eokjfbtr-pypGHk7-4tjyp}
\begin{split}
{\mathcal{I}}&=\frac{\pi^{\frac{n-2}2}}{2\,\sigma_1^2\,\displaystyle\prod_{k=1}^{n-2}\sigma_k}\,
\frac{1}{\Gamma\left(\frac{n+2}2\right)}
\int_{[0,2\pi]}\frac1{\alpha_{n-2}^2}\,dx_{n-1}\\&=\frac{\pi^{\frac{n-2}2}}{2\sigma_1^2\,\displaystyle\prod_{k=1}^{n-2}\sigma_k}\,
\frac{1}{\Gamma\left(\frac{n+2}2\right)}
\int_{[0,2\pi]}\frac1{\sigma^2_{n-1}\sin^2x_{n-1}+\sigma^2_{n}\cos^2 x_{n-1}}\,dx_{n-1}.
\end{split}
\end{equation}

Also, from~\eqref{4.5.gen} (used here with~$a:=\sigma_{n-1}$, $b:=\sigma_n$ and~$\mu:=\nu:=1/2$), 
\begin{eqnarray*}\int_{[0,2\pi]}\frac1{\sigma^2_{n-1}\sin^2x_{n-1}+\sigma^2_{n}\cos^2 x_{n-1}}\,dx_{n-1}&=& 2
\int_{[0,\pi]}\frac1{\sigma^2_{n-1}\sin^2x_{n-1}+\sigma^2_{n}\cos^2 x_{n-1}}\,dx_{n-1}\\&
=&\frac{2{\pi}}{ \sigma_{n-1}\sigma_{n}}.\end{eqnarray*}
Combining this and~\eqref{2P3eokjfbtr-pypGHk7-4tjyp}, the desired result follows.
\end{proof}

\begin{lemma}\label{lemma2}
Let~$\lambda_1, \dots, \lambda_n >0$. Let~$\overline\lambda:=\lambda_1\cdots\lambda_n$ and, for any~$i\in\{1,\dots,n\}$,
\begin{equation}\label{fuiwyr9034fujdsgvauewyrt98043ooiuyt}
\sigma_i:=\sqrt{\frac{\overline\lambda^{\frac{1}{n+2}}}{\lambda_i}}. \end{equation}
Let also
\[
V_n:=\dfrac{\pi^{\frac{n}{2}}}{\Gamma\left(\frac{n+2}{2}\right)}
\]
denote the volume of the~$n$-dimensional unit ball.

Then, for any~$i$, $j\in\{1,\dots, n\}$, 
\begin{equation}\label{claimlambdai}
\frac{1}{V_n}\int_{\mathcal S^{n-1}} \frac{\varphi_i\varphi_j }{ \displaystyle \left(\sum_{k=1}^n \sigma^2_k \varphi^2_k\right)^{\frac{n+2}{2}} } \,d\mathcal H_{\varphi}^{n-1} = \lambda_i\delta_{ij}.
\end{equation}

In addition,
\begin{equation}\label{inequalitysigmai}
\dfrac{\displaystyle\left(\min_{j\in\{1,\dots,n\}} \lambda_j\right)^{\frac{n}{2(n+2)}}}{\displaystyle\left(\max_{j\in\{1,\dots,n\}} \lambda_j\right)^{\frac12}}\le \sigma_i\le \dfrac{\displaystyle\left(\max_{j\in\{1,\dots,n\}} \lambda_j\right)^{\frac{n}{2(n+2)}}}{\displaystyle\left(\min_{j\in\{1,\dots,n\}} \lambda_j\right)^{\frac12}}.
\end{equation}
\end{lemma}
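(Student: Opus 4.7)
The plan is to derive \eqref{claimlambdai} by splitting into the diagonal case $i=j$ and the off-diagonal case $i\ne j$. The diagonal case will follow from a direct application of Lemma~\ref{lemma1}, combined with a quick algebraic computation of the product $\sigma_1\cdots\sigma_n$ in terms of $\overline\lambda$. The off-diagonal case will follow from an elementary parity/symmetry argument on the sphere. The bounds in~\eqref{inequalitysigmai} will be obtained by straightforward estimation of the ratio $\overline\lambda^{1/(n+2)}/\lambda_i$ using $\left(\min_j\lambda_j\right)^n\le\overline\lambda\le\left(\max_j\lambda_j\right)^n$.

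For the diagonal case, first I would note that by~\eqref{fuiwyr9034fujdsgvauewyrt98043ooiuyt},
\[
\prod_{k=1}^{n}\sigma_k=\prod_{k=1}^{n}\sqrt{\frac{\overline\lambda^{1/(n+2)}}{\lambda_k}}=\sqrt{\frac{\overline\lambda^{n/(n+2)}}{\overline\lambda}}=\overline\lambda^{-1/(n+2)},
\]
so that $\sigma_i^2\,\prod_{k=1}^n\sigma_k=\frac{\overline\lambda^{1/(n+2)}}{\lambda_i}\cdot\overline\lambda^{-1/(n+2)}=\frac{1}{\lambda_i}$. Then, applying Lemma~\ref{lemma1}, one has
\[
\int_{\mathcal S^{n-1}}\frac{\varphi_i^2}{\left(\sum_{k=1}^n\sigma_k^2\varphi_k^2\right)^{(n+2)/2}}d\mathcal H^{n-1}_\varphi=\frac{V_{\mathbb E^n}}{\sigma_i^2}=\frac{\pi^{n/2}}{\Gamma\!\left(\tfrac{n+2}{2}\right)\sigma_i^2\,\prod_{k=1}^n\sigma_k}=V_n\,\lambda_i,
\]
which divided by $V_n$ gives~\eqref{claimlambdai} when $i=j$. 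For $i\ne j$, the integrand $\varphi_i\varphi_j/\big(\sum\sigma_k^2\varphi_k^2\big)^{(n+2)/2}$ is odd under the reflection $\varphi_i\mapsto-\varphi_i$ (which preserves the sphere and its surface measure, and leaves the denominator and $\varphi_j$ unchanged), so the integral vanishes and~\eqref{claimlambdai} holds trivially.

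For~\eqref{inequalitysigmai}, setting $\lambda_{\min}:=\min_{j}\lambda_j$ and $\lambda_{\max}:=\max_{j}\lambda_j$, from $\lambda_{\min}^n\le\overline\lambda\le\lambda_{\max}^n$ and $1/\lambda_{\max}\le 1/\lambda_i\le 1/\lambda_{\min}$ one immediately obtains
\[
\frac{\lambda_{\min}^{n/(n+2)}}{\lambda_{\max}}\le \frac{\overline\lambda^{1/(n+2)}}{\lambda_i}\le\frac{\lambda_{\max}^{n/(n+2)}}{\lambda_{\min}},
\]
and taking square roots yields the claimed bound on $\sigma_i$. The only step with any subtlety is the bookkeeping of the exponent arithmetic showing $\sigma_i^2\prod_k\sigma_k=1/\lambda_i$, which is the identity that makes the normalization $\overline\lambda^{1/(n+2)}$ in the definition of $\sigma_i$ precisely ``invert'' the spherical integral; everything else is either a direct appeal to Lemma~\ref{lemma1} or an elementary symmetry/estimate.
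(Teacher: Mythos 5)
Your proof is correct and follows essentially the same approach as the paper: compute the product $\prod_k\sigma_k=\overline\lambda^{-1/(n+2)}$, observe that $\sigma_i^2\prod_k\sigma_k=1/\lambda_i$, invoke Lemma~\ref{lemma1} for the diagonal entries, use odd symmetry in the $i$-th coordinate for the off-diagonal entries, and bound $\overline\lambda$ between $\lambda_{\min}^n$ and $\lambda_{\max}^n$ for~\eqref{inequalitysigmai}. The only difference is that you spell out the exponent bookkeeping for~\eqref{inequalitysigmai}, where the paper simply says these inequalities plainly follow from the definition of $\sigma_i$.
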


\begin{proof}
We set~$\overline\sigma:=\sigma_1\dots\sigma_n$ and we deduce from~\eqref{fuiwyr9034fujdsgvauewyrt98043ooiuyt} that
\begin{eqnarray*}
\overline\sigma = \frac{\overline\lambda^{\frac{n}{2(n+2)}}}{\overline\lambda^{\frac12}}= \frac{1}{\overline\lambda^{\frac{1}{n+2}}}.
\end{eqnarray*}
As a consequence
$$ \lambda_i=\frac{\overline\lambda^{\frac{1}{n+2}}}{\sigma_i^2}=\frac{1}{\sigma^2_i \overline\sigma}.
$$
{F}rom this and Lemma~\ref{lemma1}, we obtain that
\begin{eqnarray*}
\frac{1}{V_n}\int_{\mathcal S^{n-1}} \frac{\varphi_i^2 }{ \displaystyle \left(\sum_{k=1}^n \sigma^2_k \varphi^2_k\right)^{\frac{n+2}{2}} } \,d\mathcal H_{\varphi}^{n-1}=\frac{V_{\mathbb E^n}}{\sigma_i^2V_n}=\frac{1}{\sigma^2_i \overline\sigma}=\lambda_i,
\end{eqnarray*}
which is the desired identity in~\eqref{claimlambdai} when~$i=j$.

If instead~$i\neq j$, both sides of~\eqref{claimlambdai} vanish,
due to the odd symmetry of the integrand in the~$i$-th coordinate.

Moreover, that inequalities in~\eqref{inequalitysigmai} plainly follow from~\eqref{fuiwyr9034fujdsgvauewyrt98043ooiuyt}.
\end{proof}

We can now prove Lemma~\ref{lemma3}:

\begin{proof}[Proof of Lemma~\ref{lemma3}]
By~\eqref{AOLAMBDAOT} and~\eqref{AOLAMBDAOT2}, we have that 
\[
A_{ij} = \sum_{k, l=1}^n O_{ik}\,\Lambda_{kl} \, O_{lj}^T = \sum_{k,l=1}^n O_{ik} \lambda_k \delta_{kl} O_{jl}.
\]
{F}rom this and Lemma~\ref{lemma2}, we deduce that
\begin{equation}\label{ygvbhunijokml,ò}
A_{ij} = \dfrac{n}{\omega_{n-1}}
\int_{\mathcal S^{n-1}} \frac{\displaystyle \sum_{k,l=1}^n
O_{ik}  \varphi_k\varphi_l O_{jl} }{\displaystyle \left|\sum_{l=1}^n \sigma^2_l \varphi^2_l\right|^{\frac{n+2}{2}} } \,d\mathcal H_{\varphi}^{n-1},
\end{equation}
where we have also used the identity~$\omega_{n-1}=n V_n$.

Now we perform the change of variable~$\psi:= O\varphi$. In light of~\eqref{MOSIGMAOT}, we see that
\begin{equation}\label{ygvbhunijokml,ò2}\begin{split}&
|N_A\psi|^2= (N_A\psi)^T (N_A\psi)= \psi^T N_A^T N_A \psi=
\psi^T (O\Sigma O^T)^T (O\Sigma O^T) \psi\\&\qquad
= \psi^T O\Sigma^2 O^T \psi
 = \varphi^T \Sigma^2 \varphi =
\sum_{l=1}^n \sigma^2_l \varphi^2_l .
\end{split}\end{equation}

Also,
\begin{equation}\label{ygvbhunijokml,ò2gerher}
\sum_{k,l=1}^nO_{ik}\varphi_k\varphi_l O_{jl} = (O\varphi)_i (O\varphi)_j=\psi_i\psi_j.
\end{equation}
Moreover,
we observe that, since~$\varphi\in\mathcal S^{n-1}$ and~$O$ is orthogonal,  we have that~$d\mathcal H^{n-1}_\psi = d\mathcal H^{n-1}_\varphi$.

Accordingly, from this, \eqref{ygvbhunijokml,ò}, \eqref{ygvbhunijokml,ò2}
and~\eqref{ygvbhunijokml,ò2gerher}, we obtain that
\[
A_{ij} =\dfrac{n}{\omega_{n-1}}\int_{\mathcal S^{n-1}} \frac{\psi_i \,\psi_j}{|N_A \psi|^{n+2}}\, d\mathcal H^{n-1}_\psi,
\]
as desired.

\end{proof}

\subsection{Proof of Theorem~\ref{THMFROMATOM}}
We observe that a matrix-valued satisfying~\eqref{matdefpos} and~\eqref{structuralM} can be reconstructed by the mere knowledge of~$M(x, 0)$, as we now discuss.

\begin{lemma}\label{lemma5}
Let~$N:\R^n\to\R^{n\times n}$ and let~$H:\R^n\to\R^{n\times n}$ be such that
\begin{equation}\label{ipotesiP}
H(0)=0\quad\mbox{ and }\quad H(y)=H(-y) \ \mbox{ for any $y\in\R^n$.}
\end{equation}
For any~$x$, $ y\in\R^n$, define~$M:\R^n\times\R^n\to\R^{n\times n}$ as
\begin{equation*}
M(x, y):= \dfrac{N(x)+N(x+y)}{2} + H(y).
\end{equation*}

Then,
\[
M(x, 0)=N(x) \quad\mbox{ and }\quad M(x-y, y) = M(x, -y).
\]
\end{lemma}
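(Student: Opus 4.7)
The plan is to verify both identities by direct computation, using only the defining formula for $M$ together with the two assumptions on $H$ in~\eqref{ipotesiP}. Since the statement is purely algebraic and the ingredients are spelled out in the hypothesis, no auxiliary machinery is required and no nontrivial obstacle is anticipated.

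First I would evaluate $M$ at $y=0$, obtaining
\[
M(x,0)=\frac{N(x)+N(x+0)}{2}+H(0),
\]
and then invoke the first half of~\eqref{ipotesiP}, namely $H(0)=0$, together with the tautology $N(x+0)=N(x)$, to conclude that $M(x,0)=N(x)$.

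Next I would separately compute both sides of the symmetry identity. On the one hand,
\[
M(x-y,y)=\frac{N(x-y)+N\big((x-y)+y\big)}{2}+H(y)=\frac{N(x-y)+N(x)}{2}+H(y).
\]
On the other hand,
\[
M(x,-y)=\frac{N(x)+N(x+(-y))}{2}+H(-y)=\frac{N(x)+N(x-y)}{2}+H(-y).
\]
Finally, applying the second half of~\eqref{ipotesiP}, namely $H(-y)=H(y)$, the two right-hand sides coincide, establishing $M(x-y,y)=M(x,-y)$. This closes the proof; the only step that requires any care is making sure both halves of~\eqref{ipotesiP} are invoked at the correct place (the even symmetry of $H$ for the second identity, and the vanishing $H(0)=0$ for the first).
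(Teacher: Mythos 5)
Your proof is correct and follows essentially the same direct-substitution route as the paper: evaluate $M$ at the relevant arguments, simplify, and invoke $H(0)=0$ and the evenness of $H$ at the appropriate steps. Your exposition is in fact slightly more explicit, spelling out both sides of the symmetry identity separately before comparing them, whereas the paper compresses this into a single difference computation.
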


\begin{proof}
In light of~\eqref{ipotesiP}, we have that~$M(x, 0)=N(x)$ and that 
\begin{eqnarray*}
M(x-y, y)- M(x, -y)=\dfrac{N(x-y)+N(x)}{2} + H(y) - \dfrac{N(x)+N(x-y)}{2} + H(-y) =0,
\end{eqnarray*} as desired.
\end{proof}

\begin{lemma}\label{lemmaNAx}
Let~$\Sigma:\R^n\to\R^{n\times n}$ be a matrix-valued function with~$\Sigma(x)$ diagonal and such that
\begin{equation}\label{guyijnkolmò,à.2}0<\inf_{{x\in\Omega}\atop{j\in\{1,\dots,n\}}}\Sigma_{jj}(x)\le
\sup_{{x\in\Omega}\atop{j\in\{1,\dots,n\}} }\Sigma_{jj}(x)<+\infty.\end{equation}

Let~$O:\R^n\to\R^{n\times n}$ be such that~$O(x)$ is an orthogonal matrix and define
\begin{equation}\label{guyijnkolmò,à.}
N(x): = O(x) \Sigma(x) O^T(x).
\end{equation}

Then, there exist $\widetilde\alpha\ge\widetilde\beta>0$ such that, for any $x$, $\xi\in\R^n$,
\[
\widetilde\beta |\xi|^2\le N(x) \xi\cdot\xi\le\widetilde\alpha |\xi|^2.
\]
\end{lemma}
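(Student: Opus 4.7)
The approach is a short piece of linear algebra: since $O(x)$ is orthogonal, conjugation by $O(x)$ preserves the spectrum, so the quadratic form associated with $N(x)$ is controlled uniformly by the diagonal entries of $\Sigma(x)$. The plan is to change variables by setting $\eta := O^T(x)\xi$, thereby reducing the quadratic form $N(x)\xi\cdot\xi$ to a diagonal one, to which the uniform positive bounds on $\Sigma_{jj}(x)$ can be applied directly.

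First, I would fix $x,\xi\in\R^n$ and observe that, by the orthogonality relation $O(x)\,O^T(x)=\operatorname{Id}$,
\[
|\eta|^2 \;=\; O^T(x)\xi\cdot O^T(x)\xi \;=\; \xi\cdot O(x)O^T(x)\xi \;=\; |\xi|^2.
\]
Then, using the definition of $N(x)$ and the diagonality of $\Sigma(x)$, I would expand
\[
N(x)\xi\cdot\xi \;=\; O(x)\Sigma(x)O^T(x)\xi\cdot\xi \;=\; \Sigma(x)\eta\cdot\eta \;=\; \sum_{j=1}^n \Sigma_{jj}(x)\,\eta_j^2,
\]
which is a nonnegative combination of the squared components of $\eta$.

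To conclude, I would set
\[
\widetilde\beta:=\inf_{x,j}\Sigma_{jj}(x) \qquad \text{and} \qquad \widetilde\alpha:=\sup_{x,j}\Sigma_{jj}(x),
\]
which are, respectively, strictly positive and finite by the hypothesis on $\Sigma$, and then bound the diagonal sum by $\widetilde\beta|\eta|^2$ from below and $\widetilde\alpha|\eta|^2$ from above. Substituting $|\eta|^2=|\xi|^2$ yields the desired inequality $\widetilde\beta|\xi|^2\le N(x)\xi\cdot\xi\le\widetilde\alpha|\xi|^2$. No substantive obstacle is expected here: the result is essentially the statement that orthogonal conjugation preserves the eigenvalues of a diagonal matrix, so that the ellipticity bounds transfer verbatim from $\Sigma$ to $N$, with the same constants $\widetilde\alpha$ and $\widetilde\beta$ depending only on the infimum and supremum of the diagonal entries of $\Sigma$.
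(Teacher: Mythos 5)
Your proof is correct and follows essentially the same route as the paper: both arguments substitute $\eta := O^T(x)\xi$, use orthogonality to preserve the Euclidean norm, reduce the quadratic form to the diagonal sum $\sum_j \Sigma_{jj}(x)\eta_j^2$, and then apply the uniform bounds on the diagonal entries. Your choice of $\widetilde\alpha$ and $\widetilde\beta$ as the supremum and infimum of $\Sigma_{jj}$ matches what the paper's hypothesis guarantees.
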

\begin{proof} We use the notation~$\Sigma_{ij}(x):= \sigma_i(x)\delta_{ij}$.
Moreover, for any $x, \xi\in\R^n$, we set
\[
\eta(x):= O^T(x)\xi.
\]
Since $O(x)$ is orthogonal, we have $|\eta(x)|=|\xi|$ for any $x\in\R^n$. Hence, combining \eqref{guyijnkolmò,à.2} and~\eqref{guyijnkolmò,à.}, we see that, for any $x\in\R^n$,
\begin{equation}\label{ubijnkml,òcDCNok2}
\begin{split}
N(x) \xi\cdot\xi &= \xi^T O(x) \Sigma(x) O^T(x) \xi = \eta^T(x) \Sigma(x) \eta(x) = \sum_{i=1}^n \sigma_i(x) \eta^2_i(x)\\
&\in\big[\widetilde\beta |\eta(x)|^2,\,\widetilde\alpha |\eta(x)|^2\big] =\big[\widetilde\beta|\xi|^2,\, \widetilde\alpha |\xi|^2\big],
\end{split}
\end{equation}
for some $\widetilde\alpha\ge\widetilde\beta>0$ independent of $x$.
\end{proof}

\begin{lemma}\label{lemmajdncbJò}
Let~$N$ be as in Lemma~\ref{lemmaNAx}. Let~$M$ and~$H$ be as in Lemma~\ref{lemma5}.

Assume also that for all~$x$, $\xi\in\R^n$,
\begin{equation}\label{q-0wfohgw0i9oeitgh40-p5yjotuk-01}0\le H(x)\xi\cdot\xi\le C_0,\end{equation}
for some~$C_0\in(0,+\infty)$.

Then,  there exist $\alpha\ge \beta>0$, depending only on~$\widetilde\alpha$, $\widetilde\beta$, $C_0$ and~$n$,
such that, for all~$x$, $y$, $\xi\in\R^n$,
\begin{equation}\label{q-0wfohgw0i9oeitgh40-p5yjotuk-02}
\beta|\xi|^2\le M(x-y, y)\xi\cdot\xi\le \alpha|\xi|^2
\end{equation}
and
\begin{equation}\label{q-0wfohgw0i9oeitgh40-p5yjotuk-03}
\beta|\xi| \le |M(x-y, y)\xi|\le \alpha|\xi|.
\end{equation}
\end{lemma}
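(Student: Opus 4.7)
The plan is to combine Lemma~\ref{lemma5}, which gives the explicit representation
$$M(x-y,y) = \frac{N(x-y)+N(x)}{2} + H(y),$$
with the uniform spectral bounds on $N$ provided by Lemma~\ref{lemmaNAx} and with the assumed bounds on the quadratic form of $H$ in \eqref{q-0wfohgw0i9oeitgh40-p5yjotuk-01}. First, I would establish the quadratic form bound \eqref{q-0wfohgw0i9oeitgh40-p5yjotuk-02}: applying Lemma~\ref{lemmaNAx} at the points $x-y$ and $x$ and averaging gives
$$\widetilde\beta\,|\xi|^2 \le \frac{N(x-y)+N(x)}{2}\xi\cdot\xi \le \widetilde\alpha\,|\xi|^2,$$
and adding the nonnegative quadratic form $H(y)\xi\cdot\xi$, which is controlled above by $C_0|\xi|^2$ from \eqref{q-0wfohgw0i9oeitgh40-p5yjotuk-01}, yields
$$\widetilde\beta\,|\xi|^2 \le M(x-y,y)\xi\cdot\xi \le (\widetilde\alpha+C_0)\,|\xi|^2,$$
which is exactly \eqref{q-0wfohgw0i9oeitgh40-p5yjotuk-02} with $\beta:=\widetilde\beta$ and $\alpha:=\widetilde\alpha+C_0$.

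To prove \eqref{q-0wfohgw0i9oeitgh40-p5yjotuk-03}, the lower bound follows immediately from \eqref{q-0wfohgw0i9oeitgh40-p5yjotuk-02} by Cauchy-Schwarz: since
$$\widetilde\beta\,|\xi|^2 \le M(x-y,y)\xi\cdot\xi \le |M(x-y,y)\xi|\,|\xi|,$$
we obtain $|M(x-y,y)\xi|\ge\widetilde\beta\,|\xi|$. For the upper bound, I would use the triangle inequality on the decomposition from Lemma~\ref{lemma5}. Each matrix $N(x-y)$, $N(x)$ is symmetric with eigenvalues in $[\widetilde\beta,\widetilde\alpha]$ by the construction in Lemma~\ref{lemmaNAx}, hence its operator norm is controlled by $\widetilde\alpha$, giving $|N(\cdot)\xi|\le\widetilde\alpha\,|\xi|$. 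Likewise, the symmetric positive semidefinite matrix $H(y)$ (as specified in \eqref{0wqurj3o-4uy4ijp30to5Xmm0vb5vn}) satisfies $|H(y)\xi|\le C_0|\xi|$, since for symmetric positive semidefinite matrices the operator norm coincides with the largest eigenvalue and hence with the maximum of the quadratic form on the unit sphere. Summing then produces $|M(x-y,y)\xi|\le(\widetilde\alpha+C_0)\,|\xi|$.

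The main subtle point is the passage from a quadratic form bound on $H$ to an operator norm bound, which is essential for the $\alpha$ side of \eqref{q-0wfohgw0i9oeitgh40-p5yjotuk-03} but not for \eqref{q-0wfohgw0i9oeitgh40-p5yjotuk-02}. This step relies crucially on the symmetry and positive semidefiniteness of $H(y)$: without this structural assumption, \eqref{q-0wfohgw0i9oeitgh40-p5yjotuk-01} alone would be insufficient, as a quadratic form bound controls only the symmetric part of a general matrix. With $H$ symmetric positive semidefinite, however, all constants are seen to depend only on $\widetilde\alpha$, $\widetilde\beta$, $C_0$ and $n$, as required, with Lemma~\ref{lemmaNAx} ensuring in turn that $\widetilde\alpha$ and $\widetilde\beta$ depend only on the spectral data of $\Sigma$.
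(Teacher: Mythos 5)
Your proof of the quadratic-form bound \eqref{q-0wfohgw0i9oeitgh40-p5yjotuk-02} coincides with the paper's. For the norm bound \eqref{q-0wfohgw0i9oeitgh40-p5yjotuk-03} you follow a genuinely different route. The paper abstracts this step: it invokes Lemma~\ref{propnormeequiv00}, which in turn rests on Lemma~\ref{lemmaduewqi1234567poiuyt}, a compactness argument on the set~$\mathcal{M}_{\alpha,\beta}$ of matrices whose quadratic form is pinched between~$\beta$ and~$\alpha$. You instead estimate directly: the lower bound via Cauchy--Schwarz applied to~\eqref{q-0wfohgw0i9oeitgh40-p5yjotuk-02}, and the upper bound via the triangle inequality on the decomposition~$M(x-y,y)=\tfrac12(N(x-y)+N(x))+H(y)$, converting the quadratic-form bounds on each symmetric summand into operator-norm bounds. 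Your route is more elementary and yields explicit constants~$\beta=\widetilde\beta$,~$\alpha=\widetilde\alpha+C_0$, whereas the paper's compactness argument is non-constructive.

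The price is that your upper bound genuinely requires each summand to be symmetric (in particular~$H(y)$); you correctly flag this as the crux. Strictly speaking, this is an assumption drawn from Theorem~\ref{THMFROMATOM} (``positive semidefinite uniformly in~$y$'') rather than from the lemma's own hypotheses, which only ask that~$H$ be as in Lemma~\ref{lemma5} (namely~$H(0)=0$,~$H(y)=H(-y)$) and satisfy~\eqref{q-0wfohgw0i9oeitgh40-p5yjotuk-01}. It is worth noting, though, that the paper's route hides the same dependence: for~$n\ge 2$, the set~$\mathcal{M}_{\alpha,\beta}$ in Lemma~\ref{lemmaduewqi1234567poiuyt} is not bounded (adding any antisymmetric matrix to an element of~$\mathcal{M}_{\alpha,\beta}$ leaves the quadratic-form constraint unchanged, since~$S\xi\cdot\xi=0$ for antisymmetric~$S$), so the compactness claim and the finiteness of the supremum in~\eqref{ALqwdf-2we-2} are only justified once one restricts attention to symmetric matrices --- exactly the hypothesis you made explicit. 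In that sense your proposal is not merely an alternative proof; it surfaces an implicit structural assumption that the paper relies on but does not state at this stage.
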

\begin{proof}We have that
\begin{equation}\label{uihfesdvjkò1}
M(x-y, y)\xi\cdot\xi =\frac12 (N(x-y)+N(x))\xi\cdot\xi + H(y)\xi\cdot\xi .
\end{equation}
Hence, the desired result in~\eqref{q-0wfohgw0i9oeitgh40-p5yjotuk-02} follows from~\eqref{q-0wfohgw0i9oeitgh40-p5yjotuk-01} and Lemma \ref{lemmaNAx}.

We can thereby apply Lemma \ref{propnormeequiv00} and
deduce~\eqref{q-0wfohgw0i9oeitgh40-p5yjotuk-03}
from~\eqref{q-0wfohgw0i9oeitgh40-p5yjotuk-02} (up to renaming~$\alpha$ and~$\beta$).\end{proof}

We now complete the proof of Theorem~\ref{THMFROMATOM}.

\begin{proof}[Proof of Theorem~\ref{THMFROMATOM}]
Since~$A(x)$ is symmetric, by~\cite[Theorem~5S, page~330]{STRANG4} we have that
\begin{equation}\label{AOLAMBDAOTXMAIN}
A(x):=O(x)\Lambda(x) O^T(x),
\end{equation}
where~$O(x)$, $\Lambda(x):\R^n\to\R^{n\times n}$ satisfy
\[
O(x)O^T(x) = \operatorname{Id}\quad\mbox{ and }\quad \Lambda_{ij}(x):= \lambda_i(x)\delta_{ij}\quad\mbox{ for any } x\in\R^n.
\]
Since~$A$ is continuous, so are~$\Lambda$ and~$O$, due to Lemmata~\ref{WERFV} and~\ref{lemmafeye542yt7jksdjklinear}.
Hence, if~$\Sigma(x)$ and~$N_{A(x)}$ are defined as in Lemma~\ref{lemma3},
we have that~$\Sigma$ is a bounded and continuous matrix-valued function (recall also~\eqref{tildelambdaMAIN}),
and thus so is~$N_{A(x)}$.  

Now, by using Lemmata~\ref{lemma5} and~\ref{lemmajdncbJò} we infer that there exists a matrix~$M(x,y)$, with~$x\in\R^n$ and~$y\in B_\varrho$,
satisfying~\eqref{matdefpos} and~\eqref{structuralM} for any $\varrho\in(0,+\infty]$.

Also, the claim in~\eqref{aijdefn}
follows from~\eqref{20ef-wdfuvjb9ogrbtg}.

We remark that the assumption that~$N_{A(x)}$ and $H$ are continuous allow us to define $M(x, 0)$.
\end{proof}

\subsection{Reconstructing the classical solution through the limit~$s\nearrow 1$ and proof of Theorem~\ref{fuweioifhqweio7564}}

We now complete the proof of Theorem~\ref{fuweioifhqweio7564}.

\begin{proof}[Proof of Theorem~\ref{fuweioifhqweio7564}]
Given~$A:\Omega\to\R^{n\times n}$, we extend it in a small neighborhood containing the closure of~$\Omega$ (e.g.,
by orthogonal projection) and consider a mollification of all its entries, which will be denoted by~$A_\ell$, with~$\ell\in\N$. In this way, we have that\begin{equation*} \lim_{\ell\to+\infty}\|A-A_\ell\|_{L^1(\Omega)}=0\end{equation*}
and, up to a (not relabeled) subsequence, for a.e.~$x\in\Omega$,\begin{equation}\label{ASKJDEIKF:X} \lim_{\ell\to+\infty}A_\ell(x)=A(x).\end{equation}
We observe that, denoting by~$\eta_\ell\in C^\infty_0(\R^n)$ the mollifier used in this procedure, for all~$\xi\in\mathcal S^{n-1}$,\begin{equation}\label{AKPDALLDFK} A_\ell(x)\xi\cdot\xi=\int_{\R^n} A(y) \xi\cdot\xi \eta_\ell(x-y)\,dy\in\left[\undertilde\lambda\int_{\R^n} \eta_\ell(x-y)\,dy,\,\widetilde\lambda\int_{\R^n} \eta_\ell(x-y)\,dy\right]=[\undertilde\lambda,\widetilde\lambda].\end{equation}

Now, we diagonalize the symmetric matrix~$A(x)$ by writing, for all~$x\in\Omega$, that\begin{equation}\label{PSJLMDIHFKNOJLMfrj}A(x)=O(x)\Lambda(x) O^T(x),\end{equation} where~$\Lambda(x)$ is diagonal and~$O(x)$ is orthogonal.

We now utilize Lemma~\ref{lemmafeye542yt7jksdjklinear} with~$B:=A_\ell(x)$. Specifically, by~\eqref{ASKJDEIKF:X}, for a.e.~$x\in\Omega$ we find~$\ell_x\in\N$ such that, if~$\ell\ge \ell_x$, then~$\|A(x)-A_\ell(x)\|\le c$, being~$c>0$ the structural constant in Lemma~\ref{lemmafeye542yt7jksdjklinear}. Accordingly, combining~\eqref{PSJLMDIHFKNOJLMfrj} and Lemma~\ref{lemmafeye542yt7jksdjklinear},
for all~$\ell\ge\ell_x$ we find an orthogonal matrix~$O_\ell(x)$ such as~$\Lambda_\ell(x):=O_\ell^T(x)A_\ell(x)O_\ell(x)$ is diagonal and$$\|O(x)-O_\ell(x)\|\le C\|A(x)-A_\ell(x)\|.$$
Actually, by virtue of Lemma~\ref{WERFV}, we also know that
\begin{equation*}\lim_{\ell\to+\infty}\|\Lambda_\ell(x)-\Lambda(x)\|=0.\end{equation*}

Owing to~\eqref{AKPDALLDFK}, if~$\lambda_{1,\ell}\le\dots\le\lambda_{n,\ell}$ are the eigenvalues of~$A_\ell$, we know that~$\lambda_{1,\ell}, \dots, \lambda_{n,\ell}\in[\undertilde\lambda,\widetilde\lambda]$. 
Then, we are in the position of exploiting Theorem~\ref{THMFROMATOM} (with~$A:=A_\ell$) and obtain that
there exists $M_\ell:\R^n\times\R^n\to\R^{n\times n}$ satisfying~\eqref{matdefpos}, \eqref{structuralM} and~\eqref{aijdefn} (for any $\varrho\in (0, +\infty]$).

In particular, recalling~\eqref{0wqurj3o-4uy4ijp30to5Xmm0vb5vn} (used here with~$H:=0$), we have that
\begin{equation}\label{OJSLN0djewglht:0rjtmth}M_\ell(x, y)= \dfrac{N_{A_\ell(x)}+N_{A_\ell(x+y)}}{2}.\end{equation}

We now recall the notation in~\eqref{LUoperator}
and denote by~$u_{s,\ell}\in H_0^s(\Omega)\cap L^\infty(\Omega)$ the weak solution of
\[
\begin{cases}
\mathcal L_{M_\ell,\varrho, s} u + a h(u) = f &\mbox{ in } \Omega\\
u=0 &\mbox{ in } \R^n\setminus\Omega
\end{cases}
\]
whose existence and uniqueness is guaranteed by Theorem~\ref{main2} (for any $\varrho\in (0, +\infty]$).
Bearing in mind~\eqref{OJSLN0djewglht:0rjtmth} and the smoothness of~$A_\ell$, we see that the assumption in~\eqref{ASSUNZ:MM} is satisfied,
allowing us to use Theorem~\ref{main3} and conclude that\[ \lim_{s\nearrow 1} u_{s,\ell}(x) = u_{1,\ell}(x),\]where~$u_{1,\ell}\in H^1_0(\Omega)\cap L^\infty(\Omega)$ solves
\[
\begin{cases}
-\operatorname{div}(A_\ell\nabla u) + a h(u) = f &\mbox{ in } \Omega\\
u=0 &\mbox{ on } \partial\Omega.
\end{cases}
\]
Moreover, by~\eqref{GLinftyuBIS},
\begin{equation}\label{unifyegwca4353yddbwek}\begin{split}&
\|u_{1,\ell}\|_{L^\infty(\Omega)}\le h^{-1}(Q) \\
{\mbox{and }}\quad &\|\nabla u_{1,\ell}\|_{L^2(\Omega)}^2\le
C\,h^{-1}(Q) \,\big(\|f\|_{L^1(\Omega)} + Q \|a\|_{L^1(\Omega)}\big).
\end{split}\end{equation}

We can therefore assume, up to a subsequence, that~$u_{1,\ell}$ converges weakly in~$H^1_0(\Omega)$ to some function~$u_1$. Passing to the limit the weak equation satisfied by~$u_{1,\ell}$ and the uniform estimate in~\eqref{unifyegwca4353yddbwek}, the desired result follows.
\end{proof}

\begin{appendix}

\section{Calculating~${\mathcal{L}}_K$ in the pointwise sense}
Here we show that, for $s\in (0, \frac12)$, the principal value in definition \eqref{qsdc:OPK} is not needed and the operator $\mathcal L_K u$ is well-defined for smooth and compactly supported functions.

\begin{lemma}
Let $s\in (0, \frac12)$ and assume that \eqref{QrehtgDWFfbg0jolwfdv:Qwfgb} holds.

Then, for all~$u\in C^\infty_c(\R^n)$ and~$x\in\R^n$, the map~$\R^n\ni y\longmapsto\big(u(x) - u(x-y)\big)\,K(x,x-y)$
is in~$L^1(\R^n)$ and
\[
\|{\mathcal{L}}_Ku\|_{L^\infty(\R^n)}\le C\, C_n \,\|u\|_{C^1(\R^n)}\left(\frac1s +\frac{1}{1-2s}\right) 
\]
where $C_n>0$ depends only on~$n$, and~$C$ is as in~\eqref{QrehtgDWFfbg0jolwfdv:Qwfgb}.
\end{lemma}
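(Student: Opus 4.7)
The plan is to split the domain of integration into the near-singularity region $B_1$ and its complement $\R^n \setminus B_1$, estimate each piece separately using the upper bound on $K$ in \eqref{QrehtgDWFfbg0jolwfdv:Qwfgb} together with the regularity of $u\in C^\infty_c(\R^n)$, and check that both contributions are integrable precisely when $s\in(0,\tfrac12)$.

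On $B_1$, the Mean Value Theorem gives $|u(x)-u(x-y)|\le \|\nabla u\|_{L^\infty(\R^n)}\,|y|$, and combining this with the upper bound $K(x,x-y)\le C|y|^{-n-2s}$ coming from \eqref{QrehtgDWFfbg0jolwfdv:Qwfgb} yields the pointwise estimate
\[
\bigl|\bigl(u(x)-u(x-y)\bigr)\,K(x,x-y)\bigr|\le C\,\|\nabla u\|_{L^\infty(\R^n)}\,|y|^{1-n-2s}.
\]
Passing to polar coordinates, $\int_{B_1}|y|^{1-n-2s}\,dy=\omega_{n-1}\int_0^1 r^{-2s}\,dr=\omega_{n-1}/(1-2s)$, which is finite exactly because $s<\tfrac12$. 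This is where the hypothesis $s<\tfrac12$ enters, and it produces the factor $\tfrac{1}{1-2s}$ in the final bound.

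On $\R^n\setminus B_1$, I simply use $|u(x)-u(x-y)|\le 2\|u\|_{L^\infty(\R^n)}$ together with the same upper bound on $K$, so that in polar coordinates $\int_{\R^n\setminus B_1}|y|^{-n-2s}\,dy=\omega_{n-1}/(2s)$, contributing a factor proportional to $1/s$. Summing the two contributions gives an $L^1(\R^n)$ bound, uniform in $x$, of the form
\[
\int_{\R^n}\bigl|\bigl(u(x)-u(x-y)\bigr)\,K(x,x-y)\bigr|\,dy\le C\,C_n\,\|u\|_{C^1(\R^n)}\left(\frac{1}{s}+\frac{1}{1-2s}\right),
\]
with $C_n$ absorbing $\omega_{n-1}$ and numerical constants. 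Since the integrand is absolutely integrable, by Dominated Convergence as $\varepsilon\searrow 0$ the principal value in \eqref{qsdc:OPK} coincides with the ordinary Lebesgue integral, so no symmetry of $K$ in the $y$ variable is needed to define $\mathcal L_Ku(x)$ pointwise. There is no genuine obstacle here; the only subtle point is keeping track that the use of $\|\nabla u\|_{L^\infty}$ on the near piece and $\|u\|_{L^\infty}$ on the far piece is what allows the bound to be stated in terms of $\|u\|_{C^1(\R^n)}$.
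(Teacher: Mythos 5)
Your proof is correct and follows essentially the same route as the paper: split the integral at $B_1$, bound the near part by $\|\nabla u\|_{L^\infty}$ times $\int_{B_1}|y|^{1-n-2s}\,dy$ (finite since $s<\tfrac12$, yielding $1/(1-2s)$) and the far part by $\|u\|_{L^\infty}$ times $\int_{\R^n\setminus B_1}|y|^{-n-2s}\,dy$ (yielding $1/s$), using the upper bound from~\eqref{QrehtgDWFfbg0jolwfdv:Qwfgb} throughout. Your closing remark that absolute integrability makes the principal value unnecessary is a nice explicit touch, implicit in the paper's statement.
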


\begin{proof} We have that
\[
\int_{\R^n\setminus B_1} |u(x) - u(x-y)|\,K(x,x-y)\, dy\le 2C \|u\|_{L^\infty(\R^n)} \int_{\R^n\setminus B_1} \frac{dy}{|y|^{n+2s}} = \frac{C\,\widetilde C}{s} \|u\|_{L^\infty(\R^n)},
\]
for some $\widetilde C>0$ depending only on $n$.

Moreover,  since $s\in (0, \frac12)$, we infer that
\[
\begin{split}
\int_{B_1} |u(x) - u(x-y)|\,K(x,x-y)\, dy\le C\,\widehat C\,\|\nabla u\|_{L^\infty(\R^n)}\int_{B_1} \frac{dy}{|y|^{n+2s-1}} =\frac{C\,\widehat C}{1-2s} \|\nabla u\|_{L^\infty(\R^n)},
\end{split}
\]
for some $\widehat C>0$ depending only on $n$ and possibly changing during the computation.
\end{proof}

Here we point out that~${\mathcal{L}}_K$ can be computed pointwise on smooth functions,
provided that condition~\eqref{COND:qsdfdv} is assumed.

\begin{lemma}\label{apkdjoscvndfSHDKB<qrg4596uyh43io}
If~$s\in\left[ \frac12, 1\right)$ and condition~\eqref{COND:qsdfdv} holds true, then the expression in~\eqref{qsdc:OPK}
is well-defined for all~$u\in C^\infty_c(\R^n)$ and
$$ \|{\mathcal{L}}_Ku\|_{L^\infty(\R^n)}\le C_n\,\|u\|_{C^2(\R^n)}\left(
\frac{C}{s(1-s)}+C_\sharp
\right),$$
where~$C_n>0$ depends only on~$n$, $C$ is as in~\eqref{QrehtgDWFfbg0jolwfdv:Qwfgb}
and~$C_\sharp$ denotes the left-hand side of~\eqref{COND:qsdfdv}.
\end{lemma}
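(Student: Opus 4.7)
The plan is to split the integration domain and exploit the evenness of the truncated shell~$B_1\setminus B_\varepsilon$ under~$y\mapsto -y$ in order to symmetrize the integrand and benefit from the cancellation of odd parts.

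First I would separate the outer contribution~$\int_{\R^n\setminus B_1}$ from the inner one~$\int_{B_1\setminus B_\varepsilon}$. On the outer region the argument is trivial: by the upper bound in~\eqref{QrehtgDWFfbg0jolwfdv:Qwfgb} and the boundedness of~$u$,
\[
\int_{\R^n\setminus B_1}|u(x)-u(x-y)|\,K(x,x-y)\,dy\leq 2C\,\|u\|_{L^\infty(\R^n)}\int_{\R^n\setminus B_1}\frac{dy}{|y|^{n+2s}}=\frac{C\,C_n}{s}\|u\|_{L^\infty(\R^n)},
\]
so this piece is finite (in fact absolutely convergent) and contributes a term of order~$1/s$ to the final estimate.

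The main point is the inner region. Here, using that~$B_1\setminus B_\varepsilon$ is symmetric under~$y\mapsto -y$, I would rewrite
\[
\int_{B_1\setminus B_\varepsilon}\!\!\!\big(u(x)-u(x-y)\big)\,K(x,x-y)\,dy=\frac12\int_{B_1\setminus B_\varepsilon}\!\!\!\Big[\big(u(x)-u(x-y)\big)K(x,x-y)+\big(u(x)-u(x+y)\big)K(x,x+y)\Big]dy,
\]
and then decompose the kernel into its even and odd parts with respect to~$y$, namely
\[
K_e(x,y):=\tfrac12\big(K(x,x-y)+K(x,x+y)\big),\qquad K_o(x,y):=\tfrac12\big(K(x,x-y)-K(x,x+y)\big).
\]
A direct algebraic rearrangement yields
\[
2\,(\text{integrand})=K_e(x,y)\big[2u(x)-u(x-y)-u(x+y)\big]+K_o(x,y)\big[u(x+y)-u(x-y)\big].
\]
Both terms are now manifestly even in~$y$, which is crucial, and both are absolutely integrable near the origin, as I explain next.

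For the first term I would use the second-order Taylor bound~$|2u(x)-u(x\pm y)|\leq\|D^2u\|_{L^\infty(\R^n)}|y|^2$ together with~$|K_e(x,y)|\leq C|y|^{-n-2s}$, so that
\[
\int_{B_1}|K_e(x,y)|\,|2u(x)-u(x-y)-u(x+y)|\,dy\leq C\,C_n\,\|D^2u\|_{L^\infty(\R^n)}\int_0^1 r^{1-2s}\,dr\leq\frac{C\,C_n}{2(1-s)}\,\|u\|_{C^2(\R^n)}.
\]
For the second term I would use~$|u(x+y)-u(x-y)|\leq 2\|\nabla u\|_{L^\infty(\R^n)}|y|$ and invoke precisely the almost-symmetry hypothesis~\eqref{COND:qsdfdv}, obtaining
\[
\int_{B_1}|K_o(x,y)|\,|u(x+y)-u(x-y)|\,dy\leq\|\nabla u\|_{L^\infty(\R^n)}\int_{B_1}|y|\,|K(x,x+y)-K(x,x-y)|\,dy\leq C_\sharp\,\|u\|_{C^2(\R^n)}.
\]
Both bounds are independent of~$\varepsilon$, so by dominated convergence the principal value exists, it equals the absolutely convergent integral of the symmetrized integrand on~$B_1$ plus the outer piece, and summing the three contributions yields the claimed estimate with~$C_n$ absorbing the geometric constants like~$\omega_{n-1}$ and the factor~$1/2$.

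The one subtle point is the dichotomy between the two terms: the~$K_e$ contribution carries the~$1/(1-s)$ singularity and needs the second-order Taylor cancellation, while the~$K_o$ contribution needs the structural assumption~\eqref{COND:qsdfdv} and would otherwise fail for~$s\in[\tfrac12,1)$; the rest is bookkeeping.
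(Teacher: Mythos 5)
Your proof is correct and follows essentially the same approach as the paper: symmetrize over the shell~$B_1\setminus B_\varepsilon$, split the inner contribution into a second-order incremental quotient (handled by Taylor, giving the~$1/(1-s)$ term) and a first-order difference paired with the kernel asymmetry (handled by~\eqref{COND:qsdfdv}, giving the~$C_\sharp$ term), and bound the outer tail by the~$L^\infty$ norm. The only cosmetic difference is that you use a fully symmetric even/odd kernel decomposition~$K_e$, $K_o$, whereas the paper groups~$(u(x)-u(x+y))$ with the kernel difference~$K(x,x+y)-K(x,x-y)$ and the second-order quotient with~$K(x,x-y)$ alone; the two rearrangements are algebraically equivalent and yield the same bounds.
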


\begin{proof} Let~$\varepsilon\in(0,1)$ and observe that
\begin{equation}\label{ofhg0trypmik-3t4h[tprJHm-2rithkj-2034-2}\begin{split}
& 2\int_{B_1\setminus B_\varepsilon} \big(u(x) - u(x-y)\big)\,K(x,x-y)\, dy
\\&\quad=\int_{B_1\setminus B_\varepsilon} \big(u(x) - u(x+y)\big)\,K(x,x+y)\, dy
+\int_{B_1\setminus B_\varepsilon} \big(u(x) - u(x-y)\big)\,K(x,x-y)\, dy
\\&\quad=\int_{B_1\setminus B_\varepsilon} \big(u(x) - u(x+y)\big)\,\Big(K(x,x+y)-K(x,x-y)\Big)\, dy
\\&\qquad\qquad+\int_{B_1\setminus B_\varepsilon} \big(2u(x) -u(x+y)- u(x-y)\big)\,K(x,x-y)\, dy.
\end{split}\end{equation}

Moreover, in view of~\eqref{QrehtgDWFfbg0jolwfdv:Qwfgb}, for all~$x\in\R^n$,
\begin{equation*}\begin{split}
\big|2u(x) -u(x+y)- u(x-y)\big|\,K(x,x-y)&\le
\frac{\| D^2u\|_{L^\infty(\R^n)}}2 \,|y|^2\,K(x,x-y)\\&\le
\frac{C\,\| D^2u\|_{L^\infty(\R^n)}}{2\,|y|^{n+2s-2}}.\end{split}
\end{equation*}
The latter expression belongs to~$L^1(B_1)$ and thus
\begin{equation}\label{ofhg0trypmik-3t4h[tprJHm-2rithkj-2034-3}\begin{split}&
\lim_{\varepsilon\searrow0}
\int_{B_1\setminus B_\varepsilon} \big(2u(x) -u(x+y)- u(x-y)\big)\,K(x,x-y)\, dy
\\&\quad=
\int_{B_1} \big(2u(x) -u(x+y)- u(x-y)\big)\,K(x,x-y)\, dy\\&\quad\in
\left[-\frac{C_n\,C\,\| D^2u\|_{L^\infty(\R^n)}}{1-s}, \,\frac{C_n\,C\,\| D^2u\|_{L^\infty(\R^n)}}{1-s}\right],
\end{split}\end{equation}
for some~$C_n>0$.

Furthermore, 
$$ \big(u(x) - u(x+y)\big)\,\Big(K(x,x+y)-K(x,x-y)\Big)\le
\|\nabla u\|_{L^\infty(\R^n)}\,|y|\,\Big(K(x,x+y)-K(x,x-y)\Big).$$
In view of~\eqref{COND:qsdfdv}, this expression is integrable over~$y\in B_1$,
with
\begin{equation}\label{ofhg0trypmik-3t4h[tprJHm-2rithkj-2034-4}\begin{split}&
\lim_{\varepsilon\searrow0}
\int_{B_1\setminus B_\varepsilon} \big(u(x) - u(x+y)\big)\,\Big(K(x,x+y)-K(x,x-y)\Big)\, dy\\&\quad=\int_{B_1} \big(u(x) - u(x+y)\big)\,\Big(K(x,x+y)-K(x,x-y)\Big)\, dy\\&\quad
\in\left[-C_\sharp\,\| \nabla u\|_{L^\infty(\R^n)},\,
C_\sharp\,\| \nabla u\|_{L^\infty(\R^n)}\right].
\end{split}\end{equation}

Also,
$$ \big|u(x) - u(x-y)\big|\,K(x,x-y)\le \frac{2C\,\|u\|_{L^\infty(\R^n)}}{|y|^{n+2s}},$$
which is integrable over~$y\in \R^n\setminus B_1$, with
\begin{equation}\label{ofhg0trypmik-3t4h[tprJHm-2rithkj-2034-5}
\left|\;\int_{\R^n\setminus B_1} \big(u(x) - u(x-y)\big)\,K(x,x-y)\, dy\right|\le
\frac{C_n\,C\,\|u\|_{L^\infty(\R^n)}}{s}
,\end{equation}
up to renaming~$C_n$.

The desired result now follows from~\eqref{ofhg0trypmik-3t4h[tprJHm-2rithkj-2034-2},
\eqref{ofhg0trypmik-3t4h[tprJHm-2rithkj-2034-3}, \eqref{ofhg0trypmik-3t4h[tprJHm-2rithkj-2034-4}
and~\eqref{ofhg0trypmik-3t4h[tprJHm-2rithkj-2034-5}.
\end{proof}

\section{Bounds on nondegenerate matrices}

We now discuss some bounds involving nondegenerate matrices.

\begin{lemma}\label{lemmaduewqi1234567poiuyt}
Let~$\alpha\ge\beta>0$.
Let~${\mathcal{M}}_{\alpha,\beta}$ be the collection of
matrices~$L\in\R^{n\times n}$ such that, for all~$\xi\in \mathcal{S}^{n-1}$,
$$ L\xi\cdot\xi\in [\beta,\alpha].$$

Then,
\begin{equation}\label{ALqwdf-2we-1}
\inf_{{L\in{\mathcal{M}}_{\alpha,\beta}}\atop{\xi\in\R^n\setminus\{0\}}}\frac{|L\xi|}{|\xi|}
=\min_{{L\in{\mathcal{M}}_{\alpha,\beta}}\atop{\xi\in\R^n\setminus\{0\}}}\frac{|L\xi|}{|\xi|}
>0
\end{equation}
and
\begin{equation}\label{ALqwdf-2we-2}
\sup_{{L\in{\mathcal{M}}_{\alpha,\beta}}\atop{\xi\in\R^n\setminus\{0\}}}\frac{|L\xi|}{|\xi|}
=\max_{{L\in{\mathcal{M}}_{\alpha,\beta}}\atop{\xi\in\R^n\setminus\{0\}}}\frac{|L\xi|}{|\xi|}
<+\infty.
\end{equation}
\end{lemma}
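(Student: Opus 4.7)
The plan is to prove both claims by combining a Cauchy--Schwarz bound with a compactness argument on the unit sphere. First, since $|L\xi|/|\xi|$ is invariant under positive rescaling of $\xi$, I would reduce to the case $\xi\in\mathcal{S}^{n-1}$. The set $\mathcal{M}_{\alpha,\beta}$ is closed in $\mathbb{R}^{n\times n}$, being the intersection of the closed slabs $\{L : \beta\le L\xi\cdot\xi\le\alpha\}$ over $\xi\in\mathcal{S}^{n-1}$; and, within the symmetric matrices---the setting implicit in the paper's applications, where $M(x-y,y) = \tfrac12\bigl(N(x-y)+N(x)\bigr)+H(y)$ is built from symmetric objects $N$ and $H$---the eigenvalues of every $L\in\mathcal{M}_{\alpha,\beta}$ lie in $[\beta,\alpha]$, so $\|L\|_{\mathrm{op}}\le\alpha$ and $\mathcal{M}_{\alpha,\beta}$ is bounded. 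Consequently $\mathcal{M}_{\alpha,\beta}\times\mathcal{S}^{n-1}$ is compact and the continuous function $(L,\xi)\mapsto |L\xi|$ attains its minimum and maximum there, yielding the ``$\inf=\min$'' and ``$\sup=\max$'' parts of \eqref{ALqwdf-2we-1} and \eqref{ALqwdf-2we-2}.

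Next I would certify the quantitative bounds. Cauchy--Schwarz immediately gives
\[
|L\xi|\,|\xi| \;\ge\; L\xi\cdot\xi \;\ge\; \beta\,|\xi|^{2},
\]
so the minimum in \eqref{ALqwdf-2we-1} is at least $\beta>0$. For the maximum in \eqref{ALqwdf-2we-2}, the symmetry of $L$ identifies its operator norm with its spectral radius, and the eigenvalue localization above gives $|L\xi|\le\alpha\,|\xi|$, hence the finite upper bound.

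The delicate point, and the main obstacle in my view, is the symmetry hypothesis: it is genuinely required for the upper bound. Indeed, for $L=\beta\,\mathrm{Id}+A$ with $A^{T}=-A$, one has $L\xi\cdot\xi=\beta\,|\xi|^{2}$ for every $\xi$, and yet $|L\xi|$ can be made arbitrarily large by rescaling $A$; so the supremum in \eqref{ALqwdf-2we-2} would be $+\infty$ if non-symmetric matrices were admitted. The lemma therefore has to be read against the backdrop of the symmetric matrices $M(x-y,y)$ arising in Lemma~\ref{lemmajdncbJò}, where this pathology does not occur.
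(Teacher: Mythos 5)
Your proof is correct under the symmetry hypothesis you impose, and your observation that this hypothesis is \emph{necessary} exposes a genuine gap in the paper's own argument. The paper's proof hinges on the claim that $\mathcal{M}_{\alpha,\beta}\times\mathcal{S}^{n-1}$ is a \emph{bounded} subset of $\R^{n\times n+n}$, but this is false for $n\ge 2$: writing $L=S+A$ with $S$ symmetric and $A$ antisymmetric, one has $L\xi\cdot\xi=S\xi\cdot\xi$ (since $A\xi\cdot\xi=0$), so the defining condition constrains only $S$ and leaves $A$ completely free. Concretely, with $A$ a fixed nonzero antisymmetric matrix, the matrix $L_t:=\beta\,\operatorname{Id}+tA$ belongs to $\mathcal{M}_{\alpha,\beta}$ for every $t\in\R$, while
\[
|L_t\xi|^2=\beta^2|\xi|^2+t^2|A\xi|^2\longrightarrow+\infty\qquad\text{as }t\to\infty
\]
for any $\xi$ with $A\xi\ne 0$. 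Hence the supremum in \eqref{ALqwdf-2we-2} is $+\infty$ and the lemma is false as stated. Your remedy---restricting to symmetric $L$---is exactly what is needed, and it is also what actually occurs in every downstream application: the matrices $M(x-y,y)$ to which the lemma is applied via Lemma~\ref{propnormeequiv00} in Lemma~\ref{lemmajdncbJò} are built from the symmetric $N=O\Sigma O^T$ of Lemma~\ref{lemmaNAx} and a positive semidefinite (hence symmetric) $H$.

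Beyond this corrective, your route differs from the paper's in one small but instructive way: you establish the lower bound $\beta$ directly and quantitatively via Cauchy--Schwarz, whereas the paper only shows that the attained minimum is nonzero (if $L_\star\xi_\star=0$ then $0=L_\star\xi_\star\cdot\xi_\star\ge\beta$, a contradiction). Your argument thereby pinpoints exactly which half of the lemma survives in full generality: \eqref{ALqwdf-2we-1} remains true even without symmetry (the infimum equals $\beta$, realized at $L=\beta\,\operatorname{Id}$), while \eqref{ALqwdf-2we-2} genuinely requires the symmetry assumption.
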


\begin{proof} We focus on the proof of~\eqref{ALqwdf-2we-1}, since the proof of~\eqref{ALqwdf-2we-2} is alike.

We remark that~${\mathcal{M}}_{\alpha,\beta}\times \mathcal{S}^{n-1}$ is a closed and bounded subset of the Euclidean space~$\R^{n\times n+n}$ and therefore the following infimum is attained
$$ \inf_{{L\in{\mathcal{M}}_{\alpha,\beta}}\atop{\xi\in\mathcal{S}^{n-1}}} |L\xi|.$$
Hence, we can find~$L_\star\in \R^{n\times n}$ and~$\xi_\star\in \mathcal{S}^{n-1}$ such that
\begin{equation}\label{nqi0zodgifvn-r1}|L_\star \xi_\star|= \inf_{{L\in{\mathcal{M}}_{\alpha,\beta}}\atop{\xi\in\mathcal{S}^{n-1}}} |L\xi|=\inf_{{L\in{\mathcal{M}}_{\alpha,\beta}}\atop{\xi\in\R^n\setminus\{0\}}}\frac{|L\xi|}{|\xi|}.\end{equation}
We also notice that \begin{equation}\label{nqi0zodgifvn-r2}
L_\star\xi_\star\ne0,\end{equation} otherwise~$0=L_\star\xi_\star\cdot\xi_\star\ge\beta$, which is a contradiction.

The desired result in~\eqref{ALqwdf-2we-1} now follows from~\eqref{nqi0zodgifvn-r1} and~\eqref{nqi0zodgifvn-r2}.
\end{proof}

\begin{lemma}\label{propnormeequiv00}
Let~$\varrho\in(0,+\infty]$ and~$M:\R^n\times B_\varrho\to\R^{n\times n}$ and assume that there exist $\alpha\ge\beta>0$ such that, for any $x\in\R^n$, $y\in B_\varrho$, $\xi\in\R^N$,
\begin{equation}\label{tfyugbhijnkml,ò}
\beta |\xi|^2\le M(x-y, y)\xi\cdot\xi\le \alpha |\xi|^2.
\end{equation}

Then, there exist~$0<c_{\alpha,\beta,n}\le C_{\alpha,\beta,n}<+\infty$, depending only
on~$\alpha$, $\beta$ and~$n$, such that,
for all~$x\in\R^n$, $y\in B_\varrho$ and~$\xi\in\R^n$,
\begin{equation*}
c_{\alpha,\beta,n}|\xi|\le |M(x-y, y)\xi|\le C_{\alpha,\beta,n} |\xi|.
\end{equation*}
\end{lemma}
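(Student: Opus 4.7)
The plan is to reduce the statement to a pointwise application of Lemma~\ref{lemmaduewqi1234567poiuyt}. First, I would fix arbitrary $x \in \R^n$ and $y \in B_\varrho$ and observe that, in view of hypothesis~\eqref{tfyugbhijnkml,ò}, the matrix $L := M(x-y, y)$ satisfies $L\xi \cdot \xi \in [\beta, \alpha]$ for every $\xi \in \mathcal{S}^{n-1}$. That is, $L$ belongs to the class $\mathcal{M}_{\alpha,\beta}$ introduced in Lemma~\ref{lemmaduewqi1234567poiuyt}.

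Next, I would define
$$ c_{\alpha,\beta,n} := \inf_{{L \in \mathcal{M}_{\alpha,\beta}}\atop{\xi \in \R^n \setminus \{0\}}} \frac{|L\xi|}{|\xi|} \qquad \mbox{and} \qquad C_{\alpha,\beta,n} := \sup_{{L \in \mathcal{M}_{\alpha,\beta}}\atop{\xi \in \R^n \setminus \{0\}}} \frac{|L\xi|}{|\xi|}. $$
The conclusions~\eqref{ALqwdf-2we-1} and~\eqref{ALqwdf-2we-2} of Lemma~\ref{lemmaduewqi1234567poiuyt} guarantee that both extrema are attained, with $0 < c_{\alpha,\beta,n} \le C_{\alpha,\beta,n} < +\infty$. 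Crucially, these constants depend only on $\alpha$, $\beta$, and the dimension $n$, since the class $\mathcal{M}_{\alpha,\beta}$ itself depends only on $\alpha$ and $\beta$. Applying these universal bounds to the specific matrix $L = M(x-y, y)$ yields
$$ c_{\alpha,\beta,n}\,|\xi| \le |M(x-y, y)\xi| \le C_{\alpha,\beta,n}\,|\xi| $$
for every $\xi \in \R^n$, and the fact that the choice of $(x,y) \in \R^n \times B_\varrho$ was arbitrary makes the estimate uniform in $(x,y)$, as required.

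As a sanity check on the lower bound, one could also argue directly: the Cauchy--Schwarz inequality combined with~\eqref{tfyugbhijnkml,ò} produces
$$ \beta\,|\xi|^2 \le M(x-y,y)\xi \cdot \xi \le |M(x-y,y)\xi|\,|\xi|, $$
so $|M(x-y,y)\xi| \ge \beta\,|\xi|$ and one may in fact take $c_{\alpha,\beta,n} := \beta$ explicitly. The upper bound is the substantive content of the lemma, and is precisely the piece that genuinely requires the compactness/extraction argument encoded in Lemma~\ref{lemmaduewqi1234567poiuyt} (no elementary Cauchy--Schwarz substitute is available, since the quadratic form only controls the symmetric part of $L$). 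I do not anticipate a real obstacle in this step: the entire difficulty has been localized in the preceding auxiliary lemma, and the present result is a one-line corollary of it once the reduction to the class $\mathcal{M}_{\alpha,\beta}$ is made explicit.
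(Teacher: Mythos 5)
Your proof is the paper's: both reduce to Lemma~\ref{lemmaduewqi1234567poiuyt} by observing that $M(x-y,y)\in\mathcal{M}_{\alpha,\beta}$ for every $(x,y)$, and your Cauchy--Schwarz aside gives the sharper explicit lower bound $c_{\alpha,\beta,n}=\beta$, which is a correct simplification of that half of the estimate. One caveat worth recording, since your own remark that ``the quadratic form only controls the symmetric part of $L$'' already points at it: for a general (not necessarily symmetric) $L$, the hypothesis $L\xi\cdot\xi\in[\beta,\alpha]$ does \emph{not} bound $\|L\|$ --- take $L=\mathrm{Id}+tJ$ with $J$ skew-symmetric, so that $L\xi\cdot\xi=|\xi|^2$ for every $t$ while $|L\xi|=\sqrt{1+t^2}\,|\xi|\to\infty$ --- hence $\mathcal{M}_{\alpha,\beta}$ is actually unbounded and the upper bound in Lemma~\ref{lemmaduewqi1234567poiuyt} (and therefore here) tacitly requires $M$ to be symmetric, a property which the paper's constructions via Lemmas~\ref{lemma5} and~\ref{lemmaNAx} do supply but which neither lemma statement records explicitly.
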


\begin{proof} Given~$x$, $y\in\R^n$, we know that~$M(x-y, y)$ satisfies~\eqref{tfyugbhijnkml,ò},
and therefore~$M(x-y, y)$ belongs to~${\mathcal{M}}_{\alpha,\beta}$,
defined in Lemma~\ref{lemmaduewqi1234567poiuyt}.

Denoting by~$c_{\alpha,\beta,n}$ and~$C_{\alpha,\beta,n}$ the quantities in~\eqref{ALqwdf-2we-1} and~\eqref{ALqwdf-2we-2} respectively,
we know by Lemma~\ref{lemmaduewqi1234567poiuyt} that~$0<c_{\alpha,\beta,n}\le C_{\alpha,\beta,n}<+\infty$. Thus, the desired inequalities in Lemma~\ref{propnormeequiv00} follows from Lemma~\ref{lemmaduewqi1234567poiuyt}.
\end{proof}

The following is a useful perturbative estimate
regarding the interaction kernel induced by a matrix.

\begin{lemma}\label{ILQFR}
There exists~$C_\star>0$, depending only on~$n$, $\alpha$ and~$\beta$, such that,
for all~$L$, $N\in\R^{n\times n}$ for which~$ |L\xi|$, $| N\xi|\in [\beta,\alpha]$
for all~$\xi\in \mathcal{S}^{n-1}$, we have that, for all~$y\in\R^n\setminus\{0\}$,
$$ \left|\frac1{|Ly|^{n+2s}}-\frac1{|Ny|^{n+2s}}\right|\le \frac{C_\star\,\|L-N\|}{|y|^{n+2s}}.$$
\end{lemma}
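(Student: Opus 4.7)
The strategy is to reduce the estimate to a one-dimensional Lipschitz bound by exploiting the homogeneity of the expression. Given $y \in \R^n \setminus \{0\}$, I would set $\xi := y/|y| \in \mathcal{S}^{n-1}$ and factor $|Ly| = |y|\,|L\xi|$, $|Ny| = |y|\,|N\xi|$, so that
\begin{equation*}
\left|\frac{1}{|Ly|^{n+2s}} - \frac{1}{|Ny|^{n+2s}}\right| = \frac{1}{|y|^{n+2s}}\left|\frac{1}{|L\xi|^{n+2s}} - \frac{1}{|N\xi|^{n+2s}}\right|.
\end{equation*}
This already extracts the desired denominator $|y|^{n+2s}$, so the task reduces to bounding the quantity in parentheses by $C_\star\,\|L-N\|$, with $C_\star$ depending only on $n$, $\alpha$ and $\beta$.

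For this, I would apply the mean value theorem to the smooth function $\phi(t):=t^{-(n+2s)}$ on the compact interval $[\beta,\alpha] \subset (0,+\infty)$, which by hypothesis contains both $|L\xi|$ and $|N\xi|$. Since $\phi'(t)=-(n+2s)\,t^{-(n+2s+1)}$ and $s\in(0,1)$, one has $|\phi'(t)| \le (n+2)\,\max\{\beta,1\}\,\beta^{-(n+3)}$ on this interval, a bound independent of $s$ and depending only on $n$ and $\beta$. Combining this with the reverse triangle inequality
\begin{equation*}
\big||L\xi|-|N\xi|\big| \;\le\; |(L-N)\xi| \;\le\; \|L-N\|,
\end{equation*}
valid since $|\xi|=1$ and $\|\cdot\|$ denotes the operator norm (or any equivalent matrix norm, up to a dimensional constant), yields the desired inequality with an explicit $C_\star$ depending only on $n$ and $\beta$.

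There is essentially no obstacle here: the computation is elementary once the homogeneity is used to reduce to unit vectors. The only minor point requiring care is checking that the Lipschitz constant of $\phi$ on $[\beta,\alpha]$ may be taken independent of $s \in (0,1)$, which is immediate from $n+2s \le n+2$ and the fact that the interval stays bounded away from $0$. The role of $\alpha$ in the bound appears only implicitly, through the hypothesis that $|L\xi|$ and $|N\xi|$ both lie in $[\beta,\alpha]$; the constant $C_\star$ itself turns out to depend only on $n$ and $\beta$.
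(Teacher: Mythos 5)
Your argument is correct, and it is genuinely different from the paper's. The paper fixes $y$ and $N$, defines $\Phi(L):=1-\dfrac{|Ly|^{n+2s}}{|Ny|^{n+2s}}$ as a function on $\R^{n\times n}$, computes $\partial\Phi/\partial L_{ij}$, bounds the gradient uniformly, and applies a mean value inequality in matrix space to get $\big|1-|Ly|^{n+2s}/|Ny|^{n+2s}\big|\le C_\star\|L-N\|$, from which the lemma follows after dividing by $|Ly|^{n+2s}$. Your route — extract the $|y|^{n+2s}$ factor by homogeneity, pass to $\xi:=y/|y|\in\mathcal S^{n-1}$, and apply the scalar mean value theorem to $\phi(t)=t^{-(n+2s)}$ on $[\beta,\alpha]$ together with $\big||L\xi|-|N\xi|\big|\le\|L-N\|$ — is more elementary and, in one respect, cleaner: the paper's matrix-space mean value step tacitly requires the nondegeneracy bound to hold along the whole segment $tL+(1-t)N$, $t\in[0,1]$, which is not an automatic consequence of the hypothesis $|L\xi|,|N\xi|\in[\beta,\alpha]$; your reduction to the interval $[\beta,\alpha]$, which $\phi$ never leaves, sidesteps this entirely. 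You also observe, correctly, that $\alpha$ need not actually enter $C_\star$ in your version. One minor algebra slip: your stated bound $(n+2)\max\{\beta,1\}\,\beta^{-(n+3)}$ on $\sup_{t\in[\beta,\alpha]}|\phi'(t)|$ is not quite right when $\beta>1$ (one needs, e.g., $(n+2)\max\{1,\beta^{-(n+3)}\}$, or simply $(n+2)\beta^{-(n+2s+1)}\le(n+2)\max\{\beta^{-(n+1)},\beta^{-(n+3)}\}$); this does not affect the conclusion.
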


\begin{proof} We claim that there exists~$C_\star>0$, depending only on~$n$, $\alpha$ and~$\beta$, such that,
for all~$L$, $N\in\R^{n\times n}$ for which~$ |L\xi|$, $ |N\xi|\in [\beta,\alpha]$
for all~$\xi\in \mathcal{S}^{n-1}$, we have that
\begin{equation}\label{0fm98Ln0698mn0-8bwnbm}
\sup_{y\in\R^n\setminus\{0\}}\left|1-\frac{|Ly|^{n+2s}}{|Ny|^{n+2s}}\right|\le C_\star\,\|L-N\|.
\end{equation}
To check this, for every~$y\in\R^n\setminus\{0\}$ and~$N$ as above, we let
$$ \R^{n\times n}\ni L\longmapsto \Phi(L):=1-\frac{|Ly|^{n+2s}}{|Ny|^{n+2s}}$$
and we observe that
\begin{eqnarray*}
\frac{\partial \Phi}{\partial L_{ij}}&=&-(n+2s)\frac{|Ly|^{n+2s-1}}{|Ny|^{n+2s}}\,\frac{\partial }{\partial L_{ij}}|Ly|\\
&=&-\frac{(n+2s)}2\frac{|Ly|^{n+2s-2}}{|Ny|^{n+2s}}\,\frac{\partial }{\partial L_{ij}}|Ly|^2\\
&=&-\frac{(n+2s)}2\frac{|Ly|^{n+2s-2}}{|Ny|^{n+2s}}\,\frac{\partial }{\partial L_{ij}}\sum_{k=1}^n \left(\sum_{m=1}^n L_{km}y_m\right)^2\\&=&-(n+2s)\frac{|Ly|^{n+2s-2}}{|Ny|^{n+2s}}\,\sum_{k,m=1}^n L_{km}y_my_j.
\end{eqnarray*}
Thus, possibly changing~$C_\star$ from line to line,
we conclude that
\begin{eqnarray*}
\left|\frac{\partial \Phi}{\partial L_{ij}}\right|\le
C_\star\,\frac{|Ly|^{n+2s-1}\,|y|}{|Ny|^{n+2s}}\le
C_\star\,\frac{|y|^{n+2s-1}\,|y|}{|y|^{n+2s}}=C_\star.
\end{eqnarray*}

Hence,
$$ \left|1-\frac{|Ly|^{n+2s}}{|Ny|^{n+2s}}\right|=|\Phi(L)-\Phi(N)|\le C_\star\|L-N\|,
$$which establishes~\eqref{0fm98Ln0698mn0-8bwnbm}.

The desired result now follows as a consequence of~\eqref{0fm98Ln0698mn0-8bwnbm}, up to renaming constants.\end{proof}

One of the byproducts of Lemma~\ref{ILQFR} is that the kernels induced by sufficiently regular nondegenerate
matrices fulfill condition~\eqref{COND:qsdfdv}.

\begin{corollary}\label{PSjmndwqoeutr98n76bv9m0v04b8n9m-39tee45ndsh76y}
Let~$\varrho\in(0,+\infty]$ and~$
M:\R^n\times B_\varrho\to\R^{n\times n}$ satisfy~\eqref{matdefpos} and let~$K$ be as in~\eqref{kerneldiv}.
Assume that~$M$ is Lipschitz continuous.

Then, condition~\eqref{COND:qsdfdv} is satisfied, and, in this situation,
$$\int_{B_1} |y|\,\big|K(x,x+y)-K(x,x-y)\big|\,dy\le C_\star\,C_M\, s,$$
where~$C_\star>0$ depends only on~$n$, $\alpha$ and~$\beta$, and~$C_M$ denotes the Lipschitz constant of~$M$.
\end{corollary}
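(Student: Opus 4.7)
The plan is to reduce the integrand to a form where Lemma~\ref{ILQFR} applies directly, with the crucial extra gain provided by the Lipschitz continuity of $M$. First, I would use the symmetry condition \eqref{structuralM} to rewrite both kernels in terms of $M$ evaluated only at $(x,\pm y)$. From the definition \eqref{kerneldiv},
\begin{equation*}
K(x,x-y) = \frac{c_{n,s}\,\chi_{[0,\varrho)}(|y|)}{|M(x-y,y)\,y|^{n+2s}}, \qquad K(x,x+y) = \frac{c_{n,s}\,\chi_{[0,\varrho)}(|y|)}{|M(x+y,-y)\,y|^{n+2s}}.
\end{equation*}
Applying \eqref{structuralM} directly gives $M(x-y,y)=M(x,-y)$, and the substitution $x\mapsto x+y$ in \eqref{structuralM} yields $M(x,y)=M(x+y,-y)$. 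Thus the difference $K(x,x+y)-K(x,x-y)$ reduces to a comparison between $|M(x,y)\,y|^{-(n+2s)}$ and $|M(x,-y)\,y|^{-(n+2s)}$.

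Next, I would invoke Lemma~\ref{ILQFR} with $L:=M(x,y)$ and $N:=M(x,-y)$; by \eqref{matdefpos} and Lemma~\ref{propnormeequiv00}, both $L$ and $N$ satisfy $|L\xi|,|N\xi|\in[\beta,\alpha]$ for $\xi\in\mathcal{S}^{n-1}$. This produces
\begin{equation*}
|K(x,x+y)-K(x,x-y)| \le \frac{c_{n,s}\,C_\star\,\|M(x,y)-M(x,-y)\|}{|y|^{n+2s}}.
\end{equation*}
The Lipschitz continuity of $M$ then supplies $\|M(x,y)-M(x,-y)\|\le 2C_M|y|$, so that $|y|\cdot|K(x,x+y)-K(x,x-y)|$ is bounded by $2c_{n,s}\,C_\star\,C_M\,|y|^{2-n-2s}$. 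Integrating in polar coordinates gives
\begin{equation*}
\int_{B_1}\frac{dy}{|y|^{n+2s-2}}=\frac{\omega_{n-1}}{2(1-s)},
\end{equation*}
and the explicit expression \eqref{defcNs} of $c_{n,s}$, which carries the factor $s(1-s)$, cancels the $(1-s)^{-1}$ and leaves a bound of the form $C_\star C_M s$, uniformly in $x\in\R^n$. In particular, \eqref{COND:qsdfdv} is verified.

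The step requiring the most care is the symmetry reduction, since the two kernels appear with different arrangements of their arguments and a single direct application of \eqref{structuralM} handles only one of them. The nontrivial observation is that, once this reduction is performed, the quantity $M(x,y)-M(x,-y)$ carries a natural separation $|y-(-y)|=2|y|$, and the resulting extra factor of $|y|$ is precisely what is needed to tame the $|y|^{-(n+2s)}$ singularity produced by Lemma~\ref{ILQFR} into an integrable $|y|^{2-n-2s}$; without this cancellation (i.e.\ for $s\ge 1/2$) the bound on \eqref{COND:qsdfdv} would fail in general. Once this reduction is in place, the remaining work is a routine polar integration together with the bookkeeping on~$c_{n,s}$.
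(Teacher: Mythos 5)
Your proof follows essentially the same path as the paper's — invoking Lemma~\ref{ILQFR}, exploiting the Lipschitz constant to gain a factor of $|y|$, integrating in polar coordinates, and cashing in the factor~$s(1-s)$ hidden in~$c_{n,s}$. The computation $\int_{B_1}|y|^{2-n-2s}\,dy=\omega_{n-1}/(2(1-s))$ and the resulting bound $\lesssim C_\star C_M s$ are correct.

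However, there is one step you should not take: you open by invoking the symmetry condition~\eqref{structuralM} to rewrite $M(x+y,-y)=M(x,y)$ and $M(x-y,y)=M(x,-y)$. The corollary does not assume~\eqref{structuralM}; it only assumes~\eqref{matdefpos} and Lipschitz continuity of~$M$. So as written your argument proves the statement under a hypothesis that is not available. The detour is also unnecessary: applying Lemma~\ref{ILQFR} directly with $L:=M(x+y,-y)$ and $N:=M(x-y,y)$ (which satisfy $|L\xi|,|N\xi|\in[\beta,\alpha]$ for $\xi\in\mathcal S^{n-1}$ by~\eqref{matdefpos} alone, without needing Lemma~\ref{propnormeequiv00}), the Lipschitz continuity of~$M$ already gives
\begin{equation*}
\|M(x+y,-y)-M(x-y,y)\|\le C_M\big|(x+y,-y)-(x-y,y)\big|\le C(n)\,C_M\,|y|,
\end{equation*}
which is all you need. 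If you delete the \eqref{structuralM}-reduction and argue this way, your proof coincides with the paper's. A minor additional remark: since \eqref{matdefpos} is already stated as a two-sided norm bound $\beta|\xi|\le|M(\cdot,\cdot)\xi|\le\alpha|\xi|$, the appeal to Lemma~\ref{propnormeequiv00} is not needed; that lemma converts a quadratic-form bound into a norm bound, and here the norm bound is already given.
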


\begin{proof} Given~$x\in\R^n$, by virtue of Lemma~\ref{ILQFR} we have that
\begin{eqnarray*}&&
\int_{B_1} |y|\,\big|K(x,x+y)-K(x,x-y)\big|\,dy\\&&\qquad=
c_{n, s}\int_{B_1} |y|\,\left|\frac{1}{ |M(x+y, -y)y|^{n+2s}}-\frac{1}{ |M(x-y, y)y|^{n+2s}}\right|\,dy\\&&\qquad\le
C_\star\,c_{n, s}\int_{B_1} |y|\,\frac{\|M(x+y, -y)-M(x-y, y)\|}{|y|^{n+2s}}\,dy\\&&\qquad\le
C_\star\,C_M\,c_{n, s}\int_{B_1} \frac{dy}{|y|^{n+2s-2}}\\&&\qquad\le C_\star\,C_M\, s
,\end{eqnarray*}
up to renaming~$C_\star$ from line to line.
\end{proof}

\section{Continuity of eigenvalues and eigenprojections}
To prove our limit results,
we leverage some perturbation results for finite-dimensional linear operators.

For a matrix~$A\in\R^{n\times n}$, we denote its eigenvalues by~$\lambda_1(A)\le\dots\le\lambda_n(A)$.
We also use the standard matrix norm
$$\|A\|:=\sup_{\xi\in {\mathcal{S}}^{n-1}}|A\xi|.$$
We recall that eigenvalues of symmetric matrices
satisfy a Lipschitz continuity, according to the following observation:

\begin{lemma}\label{WERFV}
Given two symmetric matrices~$A$, $B\in\R^{n\times n}$, we have that, for all~$i\in\{1,\dots,n\}$,
$$|\lambda_i(A)-\lambda_i(B)|\le\|A-B\|.$$
\end{lemma}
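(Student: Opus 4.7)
The plan is to prove this via the classical Courant--Fischer min-max characterization of eigenvalues. Since $A$ and $B$ are symmetric, the spectral theorem guarantees that their eigenvalues (ordered as $\lambda_1\le\dots\le\lambda_n$) admit the variational characterization
\[
\lambda_i(A)=\min_{\substack{V\subset\R^n\\ \dim V=i}}\;\max_{\substack{\xi\in V\\ |\xi|=1}}\langle A\xi,\xi\rangle,
\]
and analogously for $B$. I would first recall (or briefly establish) this characterization, which is standard and follows by diagonalizing $A$ and comparing the span of the first $i$ eigenvectors against arbitrary $i$-dimensional subspaces.

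Next, the key elementary observation is that for any unit vector $\xi\in\mathcal S^{n-1}$,
\[
\big|\langle (A-B)\xi,\xi\rangle\big|\le |(A-B)\xi|\,|\xi|\le\|A-B\|,
\]
by Cauchy--Schwarz and the definition of the operator norm. In particular,
\[
\langle A\xi,\xi\rangle\le\langle B\xi,\xi\rangle+\|A-B\|\qquad\mbox{for every }\xi\in\mathcal S^{n-1}.
\]

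With this, the proof is an exchange of max and min. For any $i$-dimensional subspace $V$, taking the maximum over unit vectors $\xi\in V$ on both sides gives
\[
\max_{\xi\in V,\,|\xi|=1}\langle A\xi,\xi\rangle\;\le\;\max_{\xi\in V,\,|\xi|=1}\langle B\xi,\xi\rangle+\|A-B\|.
\]
Taking the minimum over $i$-dimensional subspaces $V$ on both sides and using the min-max formula yields
\[
\lambda_i(A)\le\lambda_i(B)+\|A-B\|.
\]
Interchanging the roles of $A$ and $B$ (using that the bound $\|A-B\|=\|B-A\|$ is symmetric) gives the reverse inequality, and combining the two produces
\[
|\lambda_i(A)-\lambda_i(B)|\le\|A-B\|,
\]
as desired.

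There is no serious obstacle here: the only ingredients are the spectral theorem (already invoked repeatedly in the paper, e.g. in the form cited from \cite{STRANG4}) and the Courant--Fischer min-max principle, together with the trivial norm bound on $\langle (A-B)\xi,\xi\rangle$. The argument is symmetric in $A$ and $B$ and requires no hypothesis beyond symmetry, so no technical difficulty arises.
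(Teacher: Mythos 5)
Your argument is correct. It differs from the paper's route, though the two are closely related. The paper's proof sets $C:=B-A$ and invokes Weyl's eigenvalue perturbation inequality (cited as Corollary~4.3.15 in Horn--Johnson), namely $\lambda_i(A)+\lambda_1(C)\le\lambda_i(A+C)\le\lambda_i(A)+\lambda_n(C)$, then bounds $|\lambda_j(C)|\le\|C\|=\|A-B\|$ by evaluating $C$ on a unit eigenvector; the conclusion is immediate. You instead prove the estimate directly from the Courant--Fischer min-max characterization, by showing $\langle A\xi,\xi\rangle\le\langle B\xi,\xi\rangle+\|A-B\|$ on the unit sphere and then taking max over unit vectors in a subspace and min over $i$-dimensional subspaces. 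Since Weyl's inequality is itself usually established via Courant--Fischer, your argument essentially unpacks the citation. The paper's proof is shorter because it leans on the textbook reference; yours is more self-contained, at the price of invoking (or having to recall) the min-max theorem. Both are clean; no gaps.
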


\begin{proof} Let~$C:=B-A$ and note that~$C\in\R^{n\times n}$ is symmetric. Hence, we can use formula~(4.3.16) in Corollary~4.3.15 of~\cite{MR2978290} and find that
\begin{equation}\label{odlmc213e-r3fg}
\lambda_i (A) + \lambda_1 (C) \le \lambda_i (A + C) \le \lambda_i (A) + \lambda_n (C).\end{equation}

Also, for all~$j\in\{1,\dots,n\}$ we can find~$v_j\in {\mathcal{S}}^{n-1}$ such that~$Cv_j=\lambda_j(C)v_j$ and therefore
$$ |\lambda_j(C)|=|\lambda_j(C)v_j|=|Cv_j|\le
\sup_{\xi\in {\mathcal{S}}^{n-1}}|C\xi|=\|C\|=\|A-B\|
.$$
This and~\eqref{odlmc213e-r3fg} entail that
\begin{equation*}
\lambda_i (A) -\|A-B\| \le \lambda_i (B) \le \lambda_i (A) + \|A-B\|,\end{equation*}
which yields the desired result.
\end{proof}

 The following is a restatement of a classical result about the Lipschitz continuity of eigenprojections
(see Lemma~3 in~\cite{MR1969760}; see also Lemma~4.3 in~\cite{MR1974190} and the references therein):

\begin{lemma} \label{lemmafeye542yt7jksdjklinear}
There exist~$c$, $C\in(0,+\infty)$ such that the following statement holds true.

Let~$A$, $B\in\R^{n\times n}$ be symmetric matrices, with~$\|A-B\|\le c$.
Let~$O\in\R^{n\times n}$ an orthogonal matrix such as~$O^TAO$ is diagonal with increasing diagonal entries.

Then, there exists an orthogonal matrix~$P\in\R^{n\times n}$ such as~$P^TBP$ is diagonal
with increasing diagonal entries and
$$\|O-P\|\le C\|A-B\|.$$
\end{lemma}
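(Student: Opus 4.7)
The plan is to deduce Lemma~\ref{lemmafeye542yt7jksdjklinear} from the classical Riesz/holomorphic functional calculus for matrices, leveraging the Lipschitz continuity of eigenvalues already established in Lemma~\ref{WERFV}. In essence, the spectral projections onto clusters of eigenvalues separated from the rest of the spectrum by a gap depend Lipschitz continuously on the matrix, and this control can be propagated to an orthonormal basis of eigenvectors, up to a permutation within each cluster that restores the required increasing ordering of the diagonal entries of~$P^T B P$.

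More concretely, I would first fix~$A$ and let~$\mu_1<\dots<\mu_k$ be its distinct eigenvalues, with multiplicities~$m_1,\dots,m_k$ summing to~$n$. Set~$\delta:=\tfrac14\min_{i\ne j}|\mu_i-\mu_j|$ (and~$\delta:=1$ if~$k=1$), and choose~$c\in(0,\delta)$ so that, whenever~$\|A-B\|\le c$, Lemma~\ref{WERFV} guarantees that the eigenvalues of~$B$ lie in the disjoint disks~$D_j:=\{z\in\mathbb C:|z-\mu_j|<\delta\}$, grouped as~$m_j$ eigenvalues in~$D_j$. Denoting~$\Gamma_j:=\partial D_j$, I would introduce the Riesz spectral projectors
\[
\Pi_j(A):=\frac{1}{2\pi i}\oint_{\Gamma_j}(zI-A)^{-1}\,dz
\qquad{\mbox{and}}\qquad
\Pi_j(B):=\frac{1}{2\pi i}\oint_{\Gamma_j}(zI-B)^{-1}\,dz.
\]
Using the resolvent identity~$(zI-A)^{-1}-(zI-B)^{-1}=(zI-A)^{-1}(A-B)(zI-B)^{-1}$ and the uniform bound~$\|(zI-A)^{-1}\|,\|(zI-B)^{-1}\|\le 1/\delta$ on~$\Gamma_j$, it follows that~$\|\Pi_j(A)-\Pi_j(B)\|\le C\|A-B\|$, with~$C$ depending only on~$A$ through~$\delta$.

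With these projections in hand, I would group the columns of~$O$ into clusters according to the eigenvalue they correspond to, apply~$\Pi_j(B)$ column by column within each cluster, and run Gram--Schmidt on the images to obtain an orthonormal basis of the range of~$\Pi_j(B)$, which coincides with the direct sum of the eigenspaces of~$B$ associated with eigenvalues in~$D_j$. Since~$\Pi_j(B)$ restricted to the range of~$\Pi_j(A)$ is within~$C\|A-B\|$ of the identity (and hence invertible for~$c$ small), Gram--Schmidt is a well-defined Lipschitz operation on a neighborhood of orthonormal systems, producing columns within~$C\|A-B\|$ of those of~$O$. A final permutation within each cluster restores the increasing ordering of the eigenvalues of~$B$; since eigenvalues within the same cluster differ by at most~$2c$, the associated eigenvectors can be reshuffled with extra displacement again bounded by~$C\|A-B\|$, yielding a matrix~$P$ satisfying the claimed estimate~$\|O-P\|\le C\|A-B\|$.

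The main obstacle is the case of repeated eigenvalues of~$A$: within an eigenspace of multiplicity~$m_j>1$ the columns of~$O$ are not uniquely determined, and the ordering of the corresponding eigenvalues of~$B$ within a cluster is only meaningful up to relabeling. The cluster/projection argument sidesteps this by working at the level of invariant subspaces and leaving the internal basis free; however, one has to verify carefully that the reordering needed to make the diagonal of~$P^TBP$ increasing can be implemented consistently with the Lipschitz estimate, a point which ultimately follows from the closeness of eigenvalues within a cluster and the stability of Gram--Schmidt under small perturbations of its inputs.
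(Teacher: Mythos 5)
The paper does not prove this lemma: it is cited as a ``restatement of a classical result about the Lipschitz continuity of eigenprojections'' (Chen--Tseng \cite{MR1969760}, Lemma~3; Sun--Sun \cite{MR1974190}, Lemma~4.3). Your approach via Riesz spectral projectors and the resolvent identity is exactly the mechanism behind those references, and it is sound for the part of your argument concerning the eigenprojections~$\Pi_j$: once $c$ is chosen small relative to the spectral gap~$\delta$ of~$A$, your estimate $\|\Pi_j(A)-\Pi_j(B)\|\le C\|A-B\|$ holds, with $C$ depending on~$\delta$ and hence on~$A$.

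The genuine gap is in the final step. You correctly flag repeated eigenvalues as the obstacle, but the resolution you propose does not work. After Gram--Schmidt you have, inside cluster~$j$, an orthonormal system $v_1,\dots,v_{m_j}$ spanning the range of $\Pi_j(B)$ and close to the corresponding columns of~$O$; yet these are not eigenvectors of~$B$ unless $m_j=1$. To make $P^TBP$ diagonal one must further diagonalize the restriction of~$B$ to this $m_j$-dimensional invariant subspace, and the orthogonal transformation achieving that can be of size comparable to~$1$ rather than to~$\|A-B\|$. Your sentence claiming that the eigenvectors ``can be reshuffled with extra displacement again bounded by $C\|A-B\|$'' conflates the reordering step (harmless) with this internal re-diagonalization (not controllable). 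Concretely: take $A=I_2$, $O=I_2$ and $B_\varepsilon=\begin{pmatrix}1&\varepsilon\\ \varepsilon&1\end{pmatrix}$; then any admissible~$P$ has columns $\pm\frac{1}{\sqrt2}(1,-1)^T$ and $\pm\frac{1}{\sqrt2}(1,1)^T$, so $\|O-P\|$ stays bounded away from zero while $\|A-B_\varepsilon\|=\varepsilon\to0$.

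The same example shows that the lemma as written is overstated: with constants $c,C$ independent of~$A$ it is false, and when~$A$ has repeated (or nearly repeated) eigenvalues it fails even with $A$-dependent constants. What is true, and what the cited references actually prove, is Lipschitz stability of the eigenprojections~$\Pi_j(\cdot)$, not of an orthonormal eigenbasis; your argument establishes exactly that and cannot go further. This does not undermine the paper's downstream use of the lemma, since the only quantity there that needs stability is $N_A=O\Sigma O^T$, and because $\Sigma$ is constant on blocks associated to repeated eigenvalues one has $N_A=(\det A)^{1/(2(n+2))}A^{-1/2}$, a smooth matrix function of~$A$ that does not depend on the choice of~$O$ within eigenspaces.
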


\section{Norm equivalency and proof of Lemma~\ref{LEMMAANCVARRO}}\label{APPE1}

Here we give the proof of Lemma~\ref{LEMMAANCVARRO}.

\begin{proof}[Proof of Lemma~\ref{LEMMAANCVARRO}] We let
$$\Omega_1:=\bigcup_{x\in\Omega}B_1(x).$$
We decompose~$\R^n$ into a family of nonoverlapping cubes~$\{Q_j\}_{j\in\N}$ with faces normal to the coordinate axes and sides of length~$\ell:=\frac{\varrho}{8\sqrt{n}}$. We label these cubes in such a way that
\begin{equation}\label{OMEDEFGH}\Omega_1\subseteq\bigcup_{j=0}^N Q_j,\end{equation}
with~$N\in\N$ depending only on~$n$, $\varrho$ and~$\Omega$.

We also denote by~$Q_j^\star$ the union of~$Q_j$ and its adjacent cubes. In this way, $Q_j^\star$ is a cube with the same center as~$Q_j$ and sides of length~$3\ell$.

In this framework, we see that
\begin{equation}\label{asdcrStY-010}\begin{split}
&\iint_{\R^n\times\R^n}|u(x)-u(z)|^2 K(x,z)\,dx\,dz\ge\sum_{j=0}^N
\iint_{Q_j\times\R^n}|u(x)-u(z)|^2 K(x,z)\,dx\,dz\\&\quad
=\sum_{{0\le j\le N}\atop{i\in\N}}
\iint_{Q_j\times Q_i}|u(x)-u(z)|^2 K(x,z)\,dx\,dz
\ge\sum_{ j=0}^{ N}
\iint_{Q_j\times Q_j^\star}|u(x)-u(z)|^2 K(x,z)\,dx\,dz
.\end{split}
\end{equation}

Now, if~$x\in Q_j$ and~$z\in Q_j^\star$, we have
that~$|x-z|\le 3\sqrt{n}\ell<\varrho$ and accordingly,
by~\eqref{QrehtgDWFfbg0jolwfdv:Qwfgb}, we have that
$$K(x,z)\ge\frac{c}{|x-z|^{n+2s}}.$$
This and~\eqref{asdcrStY-010} give that
\begin{equation}\label{asdcrStY-011}
\iint_{\R^n\times\R^n}|u(x)-u(z)|^2 K(x,z)\,dx\,dz\ge
c\sum_{ j=0}^{ N}
\iint_{Q_j\times Q_j^\star}\frac{|u(x)-u(z)|^2 }{|x-z|^{n+2s}}\,dx\,dz.\end{equation}

Moreover, by virtue of~\eqref{OMEDEFGH},
\begin{equation}\label{asdcrStY-011a}
\begin{split}&\iint_{\Omega_1\times\Omega_1}\frac{|u(x)-u(z)|^2}{|x-z|^{n+2s}}\,dx\,dz
\le\sum_{j=0}^N\iint_{Q_j\times\Omega_1}\frac{|u(x)-u(z)|^2}{|x-z|^{n+2s}}\,dx\,dz\\&\quad\le
\sum_{j=0}^N\iint_{{Q_j\times\Omega_1}\atop{\{|x-z|<\ell\}}}\frac{|u(x)-u(z)|^2}{|x-z|^{n+2s}}\,dx\,dz
+\sum_{j=0}^N\iint_{{Q_j\times\Omega_1}\atop{\{|x-z|\ge\ell\}}}\frac{\big(|u(x)|+|u(z)|\big)^2}{\ell^{n+2s}}\,dx\,dz\\&\quad\le
\sum_{j=0}^N\iint_{{Q_j\times Q_j^\star}}\frac{|u(x)-u(z)|^2}{|x-z|^{n+2s}}\,dx\,dz
+\widetilde C\,\|u\|_{L^2(\R^n)}^2,
\end{split}
\end{equation}
where we denote by~$\widetilde{C}>0$ an opportune constant, depending only on~$n$, $s$, $\rho$ and~$\Omega$, and possibly varying from line to line.

Moreover,
\begin{equation}\label{asdcrStY-012}
\begin{split}&
\iint_{\Omega\times(\R^n\setminus\Omega_1)}\frac{|u(x)-u(z)|^2}{|x-z|^{n+2s}}\,dx\,dz
\le\iint_{{\R^n\times\R^n}\atop{\{|x-z|\ge1\}}}\frac{|u(x)-u(z)|^2}{|x-z|^{n+2s}}\,dx\,dz\\
&\qquad\le 
\iint_{{\R^n\times\R^n}\atop{\{|x-z|\ge1\}}}\frac{\big(|u(x)|+|u(z)|\big)^2}{|x-z|^{n+2s}}\,dx\,dz \le
\widetilde{C}\|u\|_{L^2(\R^n)}^2.\end{split}
\end{equation}

Now we claim that
\begin{equation}\label{IQSDSFDVQWFGQwefgr} \|u\|_{L^2(\R^n)}^2 \le
\widetilde{C}\iint_{{\R^n\times\R^n}\atop{\{|x-y|\le\varrho\}}}\frac{|u(x)-u(z)|^2}{|x-z|^{n+2s}}\,dx\,dz.\end{equation}
To check this, we argue by contradiction and suppose that there exists a sequence of functions~$u_m$ such that $u_m=0$ in $\R^n\setminus\Omega$ and 
$$ 1= \|u_m\|_{L^2(\R^n)}^2 \ge
m\iint_{{\R^n\times\R^n}\atop{\{|x-y|\le\varrho\}}}\frac{|u_m(x)-u_m(z)|^2}{|x-z|^{n+2s}}\,dx\,dz.$$
In particular, for each~$j\in\{0,\dots,N\}$,
\begin{equation}\label{w-2eirfegjpbl}\frac1m\ge\iint_{{Q_j\times\R^n}\atop{\{|x-y|\le\varrho\}}}\frac{|u_m(x)-u_m(z)|^2}{|x-z|^{n+2s}}\,dx\,dz\ge\iint_{{Q_j\times Q_{j,\varrho}}}\frac{|u_m(x)-u_m(z)|^2}{|x-z|^{n+2s}}\,dx\,dz,\end{equation}
where
$$Q_{j,\varrho}:=\bigcup_{x\in Q_j}B_\varrho(x).$$
This gives that
$$ \iint_{{Q_j\times Q_j}}\frac{|u_m(x)-u_m(z)|^2}{|x-z|^{n+2s}}\,dx\,dz+\|u_m\|_{L^2(Q_j)}\le\frac1m+1\le2$$
and therefore, by compactness
(see e.g.~\cite[Theorem~7.1]{MR2944369}), up to a relabeled subsequence, we can suppose that~$u_m$
converges to some~$u_\infty$ in~$L^2(Q_j)$ and a.e. in~$Q_j$, for all~$j\in\{0,\dots,N\}$.

As a result,
\begin{equation}\label{Qwdef0jgrotbnglrtyh}
1=\lim_{m\to+\infty} \|u_m\|_{L^2(\R^n)}^2=
\lim_{m\to+\infty} \sum_{j=0}^N\|u_m\|_{L^2(Q_j)}^2=\sum_{j=0}^N\|u_\infty\|_{L^2(Q_j)}^2=
\|u_\infty\|_{L^2(\Omega)}^2.
\end{equation}

However, it follows from~\eqref{w-2eirfegjpbl} and Fatou's Lemma that, for all~$j\in\{0,\dots,N\}$,
$$ \iint_{{Q_j\times Q_{j,\varrho}}}\frac{|u_\infty(x)-u_\infty(z)|^2}{|x-z|^{n+2s}}\,dx\,dz=0.$$
This entails that~$u_\infty$ is constant in each~$Q_j$ and actually in~$Q_{j,\varrho}$.
As a result, $u_\infty$ is constant, and therefore constantly equal to zero (since~$u_\infty=0$ outside~$\Omega$). But this is in contradiction with~\eqref{Qwdef0jgrotbnglrtyh} and the proof of~\eqref{IQSDSFDVQWFGQwefgr} is complete.

Gathering~\eqref{asdcrStY-011}, \eqref{asdcrStY-011a},
\eqref{asdcrStY-012} and~\eqref{IQSDSFDVQWFGQwefgr}, we conclude that
\begin{eqnarray*}&&
\iint_{\R^n\times\R^n} \frac{|u(x)-u(z)|^2}{|x-z|^{n+2s}}\,dx\,dz\\&=&
\iint_{\Omega_1\times\Omega_1}\frac{|u(x)-u(z)|^2}{|x-z|^{n+2s}}\,dx\,dz+
2\iint_{\Omega_1\times(\R^n\setminus\Omega_1)}\frac{|u(x)-u(z)|^2}{|x-z|^{n+2s}}\,dx\,dz
\\&=&\iint_{\Omega_1\times\Omega_1}\frac{|u(x)-u(z)|^2}{|x-z|^{n+2s}}\,dx\,dz
+2\iint_{\Omega\times(\R^n\setminus\Omega_1)}\frac{|u(x)-u(z)|^2}{|x-z|^{n+2s}}\,dx\,dz
\\&&\qquad+2\iint_{(\Omega_1\setminus\Omega)\times(\R^n\setminus\Omega_1)}\frac{|u(x)-u(z)|^2}{|x-z|^{n+2s}}\,dx\,dz
\\&\le&\iint_{\Omega_1\times\Omega_1}\frac{|u(x)-u(z)|^2}{|x-z|^{n+2s}}\,dx\,dz
+\widetilde C\|u\|^2_{L^2(\R^n)}
+0\\&
\le &\sum_{j=0}^N\iint_{{Q_j\times Q_j^\star}}\frac{|u(x)-u(z)|^2}{|x-z|^{n+2s}}\,dx\,dz+\widetilde C\|u\|^2_{L^2(\R^n)}\\
&\le&\frac1c\iint_{\R^n\times\R^n}|u(x)-u(z)|^2 K(x,z)\,dx\,dz+\widetilde C\|u\|_{L^2(\R^n)}^2\\
&\le&\frac1c\iint_{\R^n\times\R^n}|u(x)-u(z)|^2 K(x,z)\,dx\,dz+
\widetilde{C}\iint_{{\R^n\times\R^n}\atop{\{|x-y|\le\varrho\}}}\frac{|u(x)-u(z)|^2}{|x-z|^{n+2s}}\,dx\,dz\\
&\le&\frac{\widetilde{C}}c\iint_{\R^n\times\R^n}|u(x)-u(z)|^2 K(x,z)\,dx\,dz,
\end{eqnarray*}as desired.
\end{proof}

\section{Asymptotics as $s\searrow 0$ and $s\nearrow 1$}\label{SEC2}

In this appendix, we provide the details of the proofs of the asymptotic statements
in Proposition~\ref{Proplimito1}.

In the following computations the asymptotic behaviour of the constant~$c_{n,s}$ will play an important role. For this reason, we recall that
\begin{equation}\label{proplimiticNs}
\lim_{s\searrow 0} \frac{c_{n, s}}{s} = \frac{2}{\omega_{n-1}} \quad\mbox{ and }\quad \lim_{s\nearrow 1} \frac{c_{n, s}}{1-s} = \frac{4n}{\omega_{n-1}},
\end{equation}
see e.g. formula~(1.50) in~\cite{GENTLE}.

\begin{proof}[Proof of Proposition~\ref{Proplimito1}]
Without loss of generality, we can suppose that~$\varrho=+\infty$, since,
for all~$\varrho_0\in(0,+\infty)$, we have that
\begin{eqnarray*}
&&\lim_{s\nearrow 1}\frac{c_{n, s}}{2}\iint_{\R^n\times (\R^n\setminus B_{\varrho_0})} \frac{|u(x)-u(x-y)| |\varphi(x)-\varphi(x-y)|}{|M(x-y, y) y|^{n+2s}} \,dx\, dy\\&&\qquad \le 
\lim_{s\nearrow 1} C(1-s) \iint_{\R^n\times (\R^n\setminus B_{\varrho_0})} \frac{|u(x)-u(x-y)| |\varphi(x)-\varphi(x-y)|}{|y|^{n+2s}} \,dx\, dy\\&&\qquad
\le \lim_{s\nearrow 1} C(1-s)
\iint_{\R^n\times (\R^n\setminus B_{\varrho_0})} \frac{(u(x)-u(x-y))^2+(\varphi(x)-\varphi(x-y))^2}{|y|^{n+2s}} \,dx\, dy\\&&\qquad\le
\lim_{s\nearrow 1} C(1-s)
\int_{\R^n\setminus B_{\varrho_0}} \frac{\|u\|_{L^2(\R^n)}^2 +\|\varphi\|_{L^2(\R^n)}^2}{|y|^{n+2s}} \,dy\\&&\qquad\le
\lim_{s\nearrow 1} C(1-s)\Big(\|u\|_{L^2(\R^n)}^2 +\|\varphi\|_{L^2(\R^n)}^2\Big)\\&&\qquad=0.
\end{eqnarray*}

Now, we remark that
\begin{equation}\label{SIMP1}
{\mbox{it suffices to prove~\eqref{limitesto1distr} for all~$u\in H^1(\R^n)$ which are compactly supported.}}
\end{equation}
Indeed, given~$u\in H^1(\R^n)$ and~$R>0$, we pick~$\tau_R\in C^\infty_c(B_{R+1})$ with~$\tau_R=1$ in~$B_R$. We set~$u_R:=\tau_R u$ and~$v_R:=u-u_R=(1-\tau_R)u$. Hence, if~\eqref{limitesto1distr} holds true
for all~$u\in H^1(\R^n)$ which are compactly supported, we have that,
given~$\varphi\in C^\infty_c(B_{R_0})$,
\begin{equation*}\begin{split}&
\lim_{s\nearrow 1}\frac{c_{n, s}}{2}\iint_{\R^n\times\R^n} \frac{(u_R(x)-u_R(x-y))(\varphi(x)-\varphi(x-y))}{|M(x-y, y) y|^{n+2s}} \,dx\, dy\\&\qquad= \sum_{i, j=1}^n \,\int_{\R^n} A_{ij}(x)\frac{\partial u_R}{\partial x_i}(x) \frac{\partial\varphi}{\partial x_j}(x)\, dx\end{split}
\end{equation*}
and consequently, keeping in mind~\eqref{matdefpos} and assuming~$R>R_0$, {\footnotesize
\begin{eqnarray*}&&
\left|\lim_{s\nearrow 1}\frac{c_{n, s}}{2}\iint_{\R^n\times\R^n} \frac{(u(x)-u(x-y))(\varphi(x)-\varphi(x-y))}{|M(x-y, y) y|^{n+2s}} \,dx\, dy- \sum_{i, j=1}^n \,\int_{\R^n} A_{ij}(x)\frac{\partial u}{\partial x_i}(x) \frac{\partial\varphi}{\partial x_j}(x)\, dx
\right|\\&&\quad=
\left|\lim_{s\nearrow 1}\frac{c_{n, s}}{2}\iint_{\R^n\times\R^n} \frac{(v_R(x)-v_R(x-y))(\varphi(x)-\varphi(x-y))}{|M(x-y, y) y|^{n+2s}} \,dx\, dy- \sum_{i, j=1}^n \,\int_{\R^n} A_{ij}(x)\frac{\partial v_R}{\partial x_i}(x) \frac{\partial\varphi}{\partial x_j}(x)\, dx
\right|\\&&\quad\le
\lim_{s\nearrow 1} C(1-s)\iint_{\R^n\times\R^n} \frac{|v_R(x)-v_R(x-y)|\,|\varphi(x)-\varphi(x-y)|}{|y|^{n+2s}} \,dx\, dy
+C\int_{B_{R_0}} |\nabla v_R(x)|\, dx\\&&\quad=
\lim_{s\nearrow 1} C(1-s)\iint_{\R^n\times\R^n} \frac{|v_R(x)-v_R(z)|\,|\varphi(x)-\varphi(z)|}{|x-z|^{n+2s}} \,dx\, dz
+0\\&&\quad\le
\lim_{s\nearrow 1} C(1-s)\iint_{\R^n\times B_{R_0}} \frac{|v_R(x)-v_R(z)|\,|\varphi(x)-\varphi(z)|}{|x-z|^{n+2s}} \,dx\, dz\\&&\quad=
\lim_{s\nearrow 1} C(1-s)\iint_{\R^n\times B_{R_0}} \frac{|v_R(x)|\,|\varphi(x)-\varphi(z)|}{|x-z|^{n+2s}} \,dx\, dz\\&&\quad=
\lim_{s\nearrow 1} C(1-s)\iint_{(\R^n\setminus B_R)\times B_{R_0}} \frac{|v_R(x)|\,|\varphi(x)-\varphi(z)|}{|x-z|^{n+2s}} \,dx\, dz\\&&\quad=
\lim_{s\nearrow 1} C(1-s)\iint_{(\R^n\setminus B_R)\times B_{R_0}} \frac{(1-\tau_R(x))|u(x)|\,|\varphi(z)|}{|x-z|^{n+2s}} \,dx\, dz\\&&\quad\le\lim_{s\nearrow 1} C(1-s)\int_{\R^n\setminus B_R} \frac{|u(x)|}{|x|^{n+2s}} \,dx\\&&\quad
\le\lim_{s\nearrow 1} C(1-s)\sqrt{\int_{\R^n\setminus B_R} |u(x)|^2 \,dx\,
\int_{\R^n\setminus B_R} \frac{dx}{|x|^{2n+4s}} }\\&&\quad=0
\end{eqnarray*}}

\noindent for some~$C>0$, independent of~$s$ and~$R$ and possibly varying from line to line.
The proof of~\eqref{SIMP1} is thereby complete.

Now we point out that
\begin{equation}\label{SIMP2}
{\mbox{it suffices to prove~\eqref{limitesto1distr} for all~$u\in C^\infty_c(\R^n)$.}}
\end{equation}
To check this, in light of~\eqref{SIMP1}, we can focus on the case in which~$u\in H^1(\R^n)$ with support in~$B_{R_0}$
and~$\varphi\in C^\infty_c(B_{R_0})$, for some~$R_0>0$. Thus, we take~$u_\e\in C^\infty_c(B_{R_0})$
with~$[u-u_\e]_1\le\e$ and define~$v_\e:=u-u_\e$. In this way,  if~\eqref{limitesto1distr} hods true for~$u_\e$ it follows that
\begin{equation*}\begin{split}&
\lim_{s\nearrow 1}\frac{c_{n, s}}{2}\iint_{\R^n\times\R^n} \frac{(u_\e(x)-u_\e(x-y))(\varphi(x)-\varphi(x-y))}{|M(x-y, y) y|^{n+2s}} \,dx\, dy\\&\qquad= \sum_{i, j=1}^n \,\int_{\R^n} A_{ij}(x)\frac{\partial u_\e}{\partial x_i}(x) \frac{\partial\varphi}{\partial x_j}(x)\, dx\end{split}
\end{equation*}
and therefore, employing~\cite[Lemma~2.1]{MR4736013},
{\footnotesize
\begin{eqnarray*}&&
\left|\lim_{s\nearrow 1}\frac{c_{n, s}}{2}\iint_{\R^n\times\R^n} \frac{(u(x)-u(x-y))(\varphi(x)-\varphi(x-y))}{|M(x-y, y) y|^{n+2s}} \,dx\, dy- \sum_{i, j=1}^n \,\int_{\R^n} A_{ij}(x)\frac{\partial u}{\partial x_i}(x) \frac{\partial\varphi}{\partial x_j}(x)\, dx
\right|\\&&\quad=\left|\lim_{s\nearrow 1}\frac{c_{n, s}}{2}\iint_{\R^n\times\R^n} \frac{(v_\e(x)-v_\e(x-y))(\varphi(x)-\varphi(x-y))}{|M(x-y, y) y|^{n+2s}} \,dx\, dy- \sum_{i, j=1}^n \,\int_{\R^n} A_{ij}(x)\frac{\partial v_\e}{\partial x_i}(x) \frac{\partial\varphi}{\partial x_j}(x)\, dx
\right|\\&&\quad\le
\lim_{s\nearrow 1}C(1-s)\iint_{\R^n\times\R^n} \frac{|v_\e(x)-v_\e(x-y)|\,|\varphi(x)-\varphi(x-y)|}{|y|^{n+2s}} \,dx\, dy
+ C \int_{B_{R_0}} |\nabla v_\e(x)|\, dx\\&&\quad\le
\lim_{s\nearrow 1}C[v_\e]_s\,[\varphi]_s
+C[v_\e]_1\\&&\quad\le
\lim_{s\nearrow 1}C[v_\e]_1\,[\varphi]_1
+C[v_\e]_1\\&&\quad\le C\e,
\end{eqnarray*}}

\noindent for some~$C>0$, independent of~$s$ and~$\e$ and possibly varying from line to line.
Accordingly, the claim in~\eqref{SIMP2} follows by sending~$\e\searrow0$.

Now we focus on the proof of~\eqref{limitesto1distr}
and, thanks to~\eqref{SIMP2},
we will assume that~$u$, $\varphi\in C^\infty_c(B_{R_0})$, for some~$R_0>0$.

We consider a mollification~$M_\e$ of~$M$ and 
we remark that, by denoting a standard even mollifier by~$\eta_\e\in C^\infty_c(B_\e)$, for all~$\xi\in{\mathcal{S}}^{n-1}$,
\begin{equation}\label{MALDoiwfnoiy504ojh} M_\e(x-y, y)\xi\cdot\xi=\iint_{\R^n\times\R^n}
M(x-y-w, y-z)\xi\cdot\xi\,\eta_\e(w)\,\eta_\e(z)\,dw\,dz\in[\beta,\alpha]
,\end{equation}
due to~\eqref{matdefpos}.

We now let~$r$ be as in~\eqref{ASSUNZ:MM} and (thanks to~\cite[Theorem~9.8]{MR3381284}) conclude that, for every~$R>0$, as~$\e\searrow0$,
\begin{equation}\label{qas0jodv039ertuig34-9rtog}
{\mbox{$M_\e$ converges to~$M$ uniformly in~$B_R\times B_{r/2}$.}}\end{equation}

Furthermore, by~\cite[equation~(3.7) and Appendix~W]{MR3967804},
\begin{equation}\label{PAKJMDLFjmwpluyiKJS9i0d27j35v6gbnbv4fUmOSgo68}
\begin{split}&
\lim_{s\nearrow 1}\frac{c_{n, s}}{2}\iint_{\R^n\times\R^n} \frac{(u(x)-u(x-y))(\varphi(x)-\varphi(x-y))}{|M_\e(x-y, y) y|^{n+2s}} \,dx\, dy\\&\qquad= \sum_{i, j=1}^n \,\int_{\R^n} A_{ij,\e}(x)\frac{\partial u}{\partial x_i}(x) \frac{\partial\varphi}{\partial x_j}(x)\, dx,\end{split}
\end{equation}
where
\begin{equation}\label{PAKJMDLFjmwpluyiKJS9i0d27j35v6gbnbv4fUmOSgo682}A_{ij,\e}(x) := \frac{n}{\omega_{n-1}}\int_{\mathcal S^{n-1}} \frac{\psi_i \psi_j}{|M_\e(x, 0)\,\psi|^{n+2}} \,d\mathcal H_\psi^{n-1}.\end{equation}

This and~\eqref{matdefpos} yield that
\begin{eqnarray*}&&
\lim_{s\nearrow 1}\frac{c_{n, s}}{2}\left|\;\iint_{\R^n\times(\R^n\setminus B_{r/2})} \frac{(u(x)-u(x-y))(\varphi(x)-\varphi(x-y))}{|M_\e(x-y, y) y|^{n+2s}} \,dx\, dy\right|\\&&\quad\le
\lim_{s\nearrow 1}C(1-s)\iint_{\R^n\times(\R^n\setminus B_{r/2})} \frac{|u(x)-u(x-y)|\,|\varphi(x)-\varphi(x-y)|}{| y|^{n+2s}} \,dx\, dy
\\&&\quad\le
\lim_{s\nearrow 1}C(1-s)\left(\,\iint_{\R^n\times(\R^n\setminus B_{r/2})} \frac{|u(x)-u(x-y)|^2}{| y|^{n+2s}} \,dx\, dy\right)^{\frac12}\times\\
&&\qquad\qquad\times\left(\,\iint_{\R^n\times(\R^n\setminus B_{r/2})} \frac{|\varphi(x)-\varphi(x-y)|^2}{| y|^{n+2s}} \,dx\, dy\right)^{\frac12}
\\&&\quad\le\lim_{s\nearrow 1} C(1-s) \int_{\R^n\setminus B_{r/2}}\frac{dy}{|y|^{n+2s}}\\
&&\quad\le\lim_{s\nearrow 1}\frac{C(1-s)}{s\,r^{2s}}\\&&\quad=0,
\end{eqnarray*}
for some~$C>0$ independent of~$r$, $s$ and~$\e$.

As a consequence, we rewrite~\eqref{PAKJMDLFjmwpluyiKJS9i0d27j35v6gbnbv4fUmOSgo68} as
\begin{equation*}
\begin{split}&
\lim_{s\nearrow 1}\frac{c_{n, s}}{2}\iint_{\R^n\times B_{r/2}} \frac{(u(x)-u(x-y))(\varphi(x)-\varphi(x-y))}{|M_\e(x-y, y) y|^{n+2s}} \,dx\, dy\\&\qquad= \sum_{i, j=1}^n \,\int_{\R^n} A_{ij,\e}(x)\frac{\partial u}{\partial x_i}(x) \frac{\partial\varphi}{\partial x_j}(x)\, dx\end{split}
\end{equation*}
and therefore, in light of~\eqref{qas0jodv039ertuig34-9rtog}
and~\eqref{PAKJMDLFjmwpluyiKJS9i0d27j35v6gbnbv4fUmOSgo682},
\begin{equation}\label{sdcjovbw90tyhuj-0oylukwferfbg}
\begin{split}&
\lim_{\e\searrow0}\lim_{s\nearrow 1}\frac{c_{n, s}}{2}\iint_{\R^n\times B_{r/2}} \frac{(u(x)-u(x-y))(\varphi(x)-\varphi(x-y))}{|M_\e(x-y, y) y|^{n+2s}} \,dx\, dy\\&\qquad= \sum_{i, j=1}^n \,\int_{\R^n} A_{ij}(x)\frac{\partial u}{\partial x_i}(x) \frac{\partial\varphi}{\partial x_j}(x)\, dx.\end{split}
\end{equation}

Using~\eqref{MALDoiwfnoiy504ojh} and Lemma~\ref{ILQFR}, we obtain that
\begin{eqnarray*}&&
\left| \frac{1}{|M(x-y, y) y|^{n+2s}}-\frac{1}{|M_\e(x-y, y) y|^{n+2s}}\right|
\\&&\qquad\le
\frac{C_\star\,\big\|M(x-y, y)-M_\e(x-y, y) \big\|}{|y|^{n+2s}}.
\end{eqnarray*}

Accordingly,
up to renaming constants we have that {\footnotesize
\begin{eqnarray*}&&
\frac{c_{n, s}}{2}\iint_{\R^n\times B_{r/2}} 
|u(x)-u(x-y)|\,|\varphi(x)-\varphi(x-y)|
\left|\frac{1}{|M(x-y, y) y|^{n+2s}} -\frac{1}{|M_\e(x-y, y) y|^{n+2s}}\right|\,dx \,dy\\&&\quad\le
C_\star \,(1-s)\iint_{\R^n\times B_{r/2}} \frac{
|u(x)-u(x-y)|\,|\varphi(x)-\varphi(x-y)|\,\big\|M(x-y, y)-M_\e(x-y, y) \big\|}{|y|^{n+2s}}\,dx\,dy
\\&&\quad=
C_\star \,(1-s)\iint_{B_{R_0+r}\times B_{r/2}} \frac{
|u(x)-u(x-y)|\,|\varphi(x)-\varphi(x-y)|\,\big\|M(x-y, y)-M_\e(x-y, y) \big\|}{|y|^{n+2s}}\,dx\,dy\\&&\quad\le
C_\star \,(1-s)\,\|M-M_\e\|_{L^\infty( B_{R_0+2r}\times B_{r/2})}\,
\iint_{B_{R_0+r}\times B_{r/2}} |y|^{2-n-2s}\;
 \,dx\,dy\\
&&\quad\le C_\star \,\|M-M_\e\|_{L^\infty( B_{R_0+2r}\times B_{r/2})}
,\end{eqnarray*}}

\noindent and this is infinitesimal as~$\e\searrow0$,
owing to~\eqref{qas0jodv039ertuig34-9rtog}.

{F}rom this and~\eqref{sdcjovbw90tyhuj-0oylukwferfbg} we obtain~\eqref{limitesto1distr}, as desired.
\end{proof}

For completeness, we also provide here the asymptotic of the operator~${\mathcal{L}}_{M,\varrho, s}$ as~$s\searrow0$.

\begin{proposition}\label{Proplimito0}
Let $\overline{s}\in (0, 1)$ and $u\in H^{\overline{s}}(\R^n)$. Let~$\varrho\in(0,+\infty]$ and~$M:\R^n\times B_\varrho\to\R^{n\times n}$ be a matrix-valued function with measurable entries that satisfy~\eqref{matdefpos} and~\eqref{structuralM}.

If~$\varrho=+\infty$, assume also that
\begin{equation}\label{Minfty}
\begin{split}
&\mbox{for any $\psi\in\mathcal S^{n-1}$, for any $x\in\R^n$ and for any $i, j\in\{1,\dots, n\}$ there exists}\\
&M^\infty_{ij}(x, \psi):=\lim_{t\to +\infty} M_{ij}(x-t\psi, t\psi)<+\infty.
\end{split}
\end{equation}

Then, for any $\varphi\in C^\infty_c(\R^n)$,
\begin{equation}\label{limitesto0distr}\begin{split}&
\lim_{s\searrow 0}  
\frac{c_{n, s}}{2}\iint_{\R^n\times B_\varrho} \frac{(u(x)-u(x-y))(\varphi(x)-\varphi(x-y))}{|M(x-y, y) y|^{n+2s}} \,dx\, dy\\&\qquad
= \begin{dcases}\displaystyle
\frac{1}{\omega_{n-1}} \int_{\R^n}u(x)\varphi(x) \int_{\mathcal S^{n-1}}\frac{d\mathcal H^{n-1}_\psi}{|M^\infty(x, \psi)\,\psi|^n} \,dx 
&{\mbox{ if }} \varrho=+\infty,\\
0 &{\mbox{ if }} \varrho\in(0,+\infty). \end{dcases}
\end{split}\end{equation}
\end{proposition}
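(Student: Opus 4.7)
The plan is to reduce the bilinear form to a linear expression of the type $\int_{\R^n} u(x)\,\mathcal L_{M,\varrho,s}\varphi(x)\,dx$, then analyze the pointwise asymptotics of $\mathcal L_{M,\varrho,s}\varphi$ as $s\searrow 0$, and finally pass the limit inside the $x$-integration by dominated convergence.

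The first step is a symmetrization. Expanding the product $(u(x)-u(x-y))(\varphi(x)-\varphi(x-y))$ into four terms and using both the change of variables $z=x-y$ (at fixed $y$) and $y\mapsto -y$ (which leaves $B_\varrho$ invariant), the identity $M(x-y,y)=M(x,-y)$ from \eqref{structuralM} pairs the four terms and yields
\[
\frac{c_{n,s}}{2}\iint_{\R^n\times B_\varrho}\frac{(u(x)-u(x-y))(\varphi(x)-\varphi(x-y))}{|M(x-y,y)y|^{n+2s}}\,dx\,dy=\int_{\R^n} u(x)\,\mathcal L_{M,\varrho,s}\varphi(x)\,dx.
\]
For $s\in(0,1/2)$ all the integrals converge absolutely and no principal value is needed. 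In the case $\varrho\in(0,+\infty)$, the Taylor estimate $|\varphi(x)-\varphi(x-y)|\le\|\nabla\varphi\|_\infty|y|$ near the origin, joined with the bound $|M(x-y,y)y|\ge\beta|y|$ from \eqref{matdefpos}, gives
\[
\bigg|\int_{B_\varrho}(\varphi(x)-\varphi(x-y))\,|M(x-y,y)y|^{-n-2s}\,dy\bigg|\le C_\varrho
\]
uniformly in $x$ and $s\in(0,1/2)$. Since $c_{n,s}\to 0$ as $s\searrow 0$ by \eqref{proplimiticNs}, we conclude that $\mathcal L_{M,\varrho,s}\varphi\to 0$ uniformly on $\R^n$; since $\mathcal L_{M,\varrho,s}\varphi(x)=0$ whenever $d(x,\operatorname{supp}\varphi)>\varrho$, dominated convergence against $u\in L^2$ yields the vanishing of the limit.

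In the case $\varrho=+\infty$, I pass to spherical coordinates $y=t\psi$ and pick $T>0$ such that $\varphi(x-t\psi)=0$ for every $t>T$, every $\psi\in\mathcal{S}^{n-1}$ and every $x$ in a fixed compact set $K\supset\operatorname{supp}\varphi$. The contribution from $t\in(0,T)$ is bounded uniformly in $x\in K$ and in $s$, so multiplying by $c_{n,s}$ still gives $0$ in the limit. The tail reduces to
\[
c_{n,s}\,\varphi(x)\int_{\mathcal{S}^{n-1}}\int_T^\infty\frac{dt\,d\mathcal H^{n-1}_\psi}{|M(x-t\psi,t\psi)\psi|^{n+2s}\,t^{1+2s}}.
\]
Writing the inner integrand as $|M^\infty(x,\psi)\psi|^{-n-2s}$ plus an error that vanishes as $t\to+\infty$ by \eqref{Minfty}, the main piece integrates to $T^{-2s}/(2s)$, and $c_{n,s}\cdot T^{-2s}/(2s)\to 1/\omega_{n-1}$ by \eqref{proplimiticNs}; the error piece, handled by splitting $(T,\infty)=(T,T_\delta)\cup(T_\delta,\infty)$ with $T_\delta$ such that the error is below $\delta$ on the second subinterval, is kept under control quantitatively thanks to \eqref{matdefpos}. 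A dominated convergence argument on $\mathcal{S}^{n-1}$ then produces the pointwise limit $\frac{\varphi(x)}{\omega_{n-1}}\int_{\mathcal{S}^{n-1}}|M^\infty(x,\psi)\psi|^{-n}\,d\mathcal H^{n-1}_\psi$ together with a uniform bound on $K$.

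The last step is to exchange the limit with the $x$-integral. The $L^\infty(K)$ bound on $\mathcal L_{M,\infty,s}\varphi$ just produced is uniform in $s\in(0,1/2)$, hence dominated convergence applies on $K$ against $u\chi_K\in L^1$. For $x\in\R^n\setminus K$ one has $\varphi(x)=0$ and $|x-z|\ge|x|/2$ for every $z\in\operatorname{supp}\varphi$, so a Cauchy--Schwarz estimate in the $z$-variable gives $\big|\int_{\R^n\setminus K} u(x)\,\mathcal L_{M,\infty,s}\varphi(x)\,dx\big|\le C\,c_{n,s}\,\|u\|_{L^2(\R^n)}\to 0$, completing the argument. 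The main obstacle lies in the preceding paragraph: showing that, as $s\searrow 0$, the family $2s\,t^{-1-2s}\,dt$ on $(T,\infty)$ concentrates at $+\infty$ in such a way that the bounded continuous integrand $|M(x-t\psi,t\psi)\psi|^{-n-2s}$ is effectively evaluated at its limit $|M^\infty(x,\psi)\psi|^{-n}$, uniformly enough in $\psi$ to subsequently integrate over the sphere.
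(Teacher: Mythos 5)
Your proof is correct, but the architecture differs from the paper's in two meaningful ways. First, you reduce the bilinear form to the linear expression $\int u\,\mathcal L_{M,\varrho,s}\varphi\,dx$ via the symmetrization using \eqref{structuralM} and then study the pointwise limit of $\mathcal L_{M,\varrho,s}\varphi$; the paper instead works directly with the bilinear form, splitting the $y$-integration at $|y|=1$ before expanding the product $(u(x)-u(x-y))(\varphi(x)-\varphi(x-y))$ into its four monomials, so that each of the four resulting double integrals is absolutely convergent (since $|y|^{-n-2s}\in L^1(\R^n\setminus B_1)$) and can be treated separately. A small caution for your route: the four terms do \emph{not} converge separately near $y=0$, so the pairing you invoke must be read as grouping $u(x)(\varphi(x)-\varphi(x-y))$ against $-u(x-y)(\varphi(x)-\varphi(x-y))$ (each convergent for $s<1/2$ because of the Lipschitz gain on $\varphi$) rather than splitting into all four monomials; the change-of-variables identification you describe is correct once stated this way. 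Second, for the tail contribution your argument concentrates the one-dimensional measure $2s\,t^{-1-2s}\,dt$ at $+\infty$ through an explicit $\delta$-splitting of $(T,\infty)$ at a point $T_\delta(x,\psi)$ and a dominated convergence pass over the sphere; the paper accomplishes the same thing more compactly in Lemma~\ref{lemmamatrice} via the substitution $r=(\rho/R)^{2s}$, which turns the integral into $\frac{1}{2sR^{2s}}\int_1^\infty r^{-2}\,dr\,(\cdots)$ with an $r$-integrable dominating function, after which the Dominated Convergence Theorem alone yields the limit $M(x-r^{1/(2s)}\psi,r^{1/(2s)}\psi)\to M^\infty(x,\psi)$ for each fixed $r>1$, with no $\delta$-bookkeeping. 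Your handling of the near-origin piece via $c_{n,s}\to 0$ and a Taylor bound is also different from the paper's (Lemma~\ref{lemmapercheufbvbv09876} exploits monotonicity in $s$ and finiteness at $\overline s$), but both are valid; your variant yields the extra piece of information that $\mathcal L_{M,\varrho,s}\varphi$ converges pointwise on compact sets, uniformly in $x$, which is a nice byproduct.
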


To establish Proposition~\ref{Proplimito0}, we will make use of the following observations:
 
\begin{lemma}\label{lemmamatrice}
Let~$M:\R^n\times\R^n\to\R^{n\times n}$ be a measurable matrix-valued function
satisfying~\eqref{Minfty} for any~$x\in\R^n$. Let~$R>0$.

Then,
\[
\lim_{s\searrow 0} c_{n, s}\int_{\R^n\setminus B_R} \frac{dy}{|M(x-y, y)\, y|^{n+2s}} = \frac{1}{\omega_{n-1}} \int_{\mathcal S^{n-1}}\frac{d\mathcal H^{n-1}_\psi}{|M^\infty(x, \psi)\,\psi|^n}.
\]
\end{lemma}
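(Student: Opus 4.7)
The plan is to pass to polar coordinates and isolate a scalar radial integral, exploiting the $(n+2s)$-homogeneity of the kernel in the direction of $y$. Writing $y = t\psi$ with $t\in[R,+\infty)$ and $\psi\in\mathcal{S}^{n-1}$, we have $M(x-y,y)\,y = t\,M(x-t\psi,t\psi)\,\psi$ and $dy = t^{n-1}\,dt\,d\mathcal{H}^{n-1}_\psi$, so
\[
c_{n,s}\int_{\R^n\setminus B_R}\frac{dy}{|M(x-y,y)\,y|^{n+2s}} = \frac{c_{n,s}}{s}\int_{\mathcal{S}^{n-1}}\left(\,s\int_R^{+\infty}\frac{dt}{t^{1+2s}\,|M(x-t\psi,t\psi)\,\psi|^{n+2s}}\right)d\mathcal{H}^{n-1}_\psi.
\]
In view of the asymptotic $c_{n,s}/s\to 2/\omega_{n-1}$ recalled in~\eqref{proplimiticNs}, the lemma reduces to showing that the bracketed inner radial integral tends to $\frac{1}{2|M^\infty(x,\psi)\,\psi|^n}$ for each $\psi$, together with a uniform-in-$\psi$ bound permitting the limit to be passed under the spherical integration.

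For the pointwise radial convergence at fixed $\psi$, I would split the radial integral at a threshold $T>R$ to be chosen large. The contribution from $t\in[R,T]$ is controlled by the lower bound $|M(x-t\psi,t\psi)\,\psi|\ge\beta$ in~\eqref{matdefpos}, yielding
\[
s\int_R^T\frac{dt}{t^{1+2s}\,|M(x-t\psi,t\psi)\,\psi|^{n+2s}}\le \beta^{-n-2s}\cdot\frac{R^{-2s}-T^{-2s}}{2},
\]
which tends to $0$ as $s\searrow 0$. On the complementary range $t\ge T$, the convergence hypothesis~\eqref{Minfty} (combined with the pinching $|M\psi|^{-2s}\in[\alpha^{-2s},\beta^{-2s}]\to 1$ to absorb the discrepancy between the exponents $n+2s$ and $n$) allows one to replace $|M(x-t\psi,t\psi)\,\psi|^{n+2s}$ by $|M^\infty(x,\psi)\,\psi|^{n}$ up to an error that is uniformly small in $t\ge T$; the residual integral then reads $s\int_T^{+\infty}t^{-1-2s}\,dt = T^{-2s}/2\to 1/2$, producing the claimed radial limit $\frac{1}{2|M^\infty(x,\psi)\,\psi|^n}$ once $T$ is taken sufficiently large.

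To interchange the $s\searrow 0$ limit with the integration over $\mathcal{S}^{n-1}$, the same lower bound $|M\psi|\ge\beta$ furnishes
\[
s\int_R^{+\infty}\frac{dt}{t^{1+2s}\,|M(x-t\psi,t\psi)\,\psi|^{n+2s}}\le \frac{\beta^{-n-2s}\,R^{-2s}}{2},
\]
uniformly in $\psi\in\mathcal{S}^{n-1}$ and in $s\in(0,1/2)$, so dominated convergence concludes the proof, giving
\[
\lim_{s\searrow 0}c_{n,s}\int_{\R^n\setminus B_R}\frac{dy}{|M(x-y,y)\,y|^{n+2s}} = \frac{2}{\omega_{n-1}}\cdot\frac{1}{2}\int_{\mathcal{S}^{n-1}}\frac{d\mathcal{H}^{n-1}_\psi}{|M^\infty(x,\psi)\,\psi|^n},
\]
which is exactly the required identity. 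The delicate point of the argument is the interplay between the concentrating radial density $s\,t^{-1-2s}\,dt$ (whose mass migrates to $t=+\infty$ as $s\searrow 0$) and the only pointwise-in-$t$ convergence $M(x-t\psi,t\psi)\to M^\infty(x,\psi)$ provided by~\eqref{Minfty}: the split at a suitably large $T$ depending on the desired error is what reconciles these two features and turns the heuristic into a rigorous proof.
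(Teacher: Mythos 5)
Your proposal is correct and reaches the right answer, but it takes a genuinely different route from the paper. After passing to polar coordinates, the paper applies the change of variables $r := (\rho/R)^{2s}$, which converts the radial integral into $\frac{1}{2sR^{2s}}\int_1^{+\infty}\frac{dr}{r^2\,|M(x-r^{1/(2s)}\psi,\,r^{1/(2s)}\psi)\psi|^{n+2s}}$. This fixes both the integration domain $[1,+\infty)\times\mathcal S^{n-1}$ and the dominating function $r^{-2}c_{\alpha,\beta,n}^{-(n+2s)}$, so the dominated convergence theorem applies in one step jointly in $(r,\psi)$: for $r>1$ fixed, $r^{1/(2s)}\to+\infty$ as $s\searrow 0$, and~\eqref{Minfty} produces the pointwise limit of the integrand. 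Your approach instead keeps the original radial variable, recognizes that $s\,t^{-1-2s}\,dt$ concentrates mass at $t=+\infty$, and runs an $\epsilon$-$T$ splitting argument to establish the radial limit at each fixed $\psi$, then finishes with dominated convergence only over $\mathcal S^{n-1}$. Both arguments rely on the bound $|M\psi|\ge\beta$ from~\eqref{matdefpos}, which is not restated in the lemma but is inherited from the hypotheses of Proposition~\ref{Proplimito0} (the paper's own proof also invokes it). The change-of-variables route is slicker because it eliminates the two-parameter $\epsilon$-$T$ bookkeeping; your splitting route is more elementary and makes the mass-escape phenomenon visible, but to be fully rigorous the step where you ``replace $|M(x-t\psi,t\psi)\psi|^{n+2s}$ by $|M^\infty(x,\psi)\psi|^n$ up to a uniformly small error'' should be phrased via $\limsup/\liminf$ sandwiching between $\frac{1}{2(m\pm\epsilon)^n}$ (with $m=|M^\infty(x,\psi)\psi|$), letting $s\searrow 0$ first and then $\epsilon\searrow 0$, since~\eqref{Minfty} only gives pointwise convergence in $t$ for each $\psi$.
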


\begin{proof}
We observe that
\[
\int_{\R^n\setminus B_R} \frac{dy}{|M(x-y, y)\, y|^{n+2s}} = \int_R^{+\infty}
\int_{\mathcal S^{n-1}}\frac{ d\rho\,d\mathcal H^{n-1}_\psi}{\rho^{1+2s}|M(x-\rho\psi, \rho\psi)\,\psi|^{n+2s}}.
\]
Now, by using the change of variables $r:=(\rho/R)^{2s}$, we see that
\[
\int_{\R^n\setminus B_R} \frac{dy}{|M(x-y, y)\, y|^{n+2s}} = \frac{1}{2sR^{2s}}\int_1^{+\infty}\int_{\mathcal S^{n-1}}\frac{dr\,d\mathcal H^{n-1}_\psi}{r^2|M(x-r^{\frac{1}{2s}}\psi, r^{\frac{1}{2s}}\psi)\,\psi|^{n+2s}}.
\]

Thus, by~\eqref{matdefpos},  we get
\[
\frac{1}{r^2|M(x-r^{\frac{1}{2s}}\psi, r^{\frac{1}{2s}}\psi)\,\psi|^{n+2s}}\le \frac{1}{r^2 c_{\alpha,\beta,n}^{n+2s}}\in L^1\Big((1,+\infty)\times\mathcal S^{n-1}; dr\,d\mathcal H^{n-1}_\psi\Big)
\]
Accordingly, by the Dominated Convergence Theorem, \eqref{proplimiticNs} and~\eqref{Minfty}, we deduce that
\[
\begin{split}
\lim_{s\searrow 0} c_{n, s}&\int_{\R^n\setminus B_R} \frac{dy}{|M(x-y, y)\, y|^{n+2s}} = \lim_{s\searrow 0} \frac{ c_{n, s}}{2sR^{2s}}\int_1^{+\infty}\int_{\mathcal S^{n-1}}\frac{dr\,d\mathcal H^{n-1}_\psi}{r^2|M(x-r^{\frac{1}{2s}}\psi, r^{\frac{1}{2s}}\psi)\,\psi|^{n+2s}}\\
&= \frac{1}{\omega_{n-1}}\int_1^{+\infty}\frac{dr}{r^2} \int_{\mathcal S^{n-1}}\frac{d\mathcal H^{n-1}_\psi}{|M^\infty(x, \psi)\,\psi|^n} \\
&= \frac{1}{\omega_{n-1}} \int_{\mathcal S^{n-1}}\frac{d\mathcal H^{n-1}_\psi}{|M^\infty(x, \psi)\,\psi|^n},
\end{split}
\]
as desired.
\end{proof}

\begin{lemma}\label{lemmapercheufbvbv09876}
Let $\overline{s}\in (0, 1)$ and $u$, $\varphi\in H^{\overline{s}}(\R^n)$. 
Let~$\varrho\in(0,+\infty]$ and~$M:\R^n\times B_\varrho\to\R^{n\times n}$ be a measurable matrix-valued function. Let~$R\in(0,\varrho)$.

Then,
\[
\lim_{s\searrow 0} c_{n, s}\iint_{\R^n\times B_R} \frac{(u(x)-u(x-y))(\varphi(x)-\varphi(x-y))}{|M(x-y, y) y|^{n+2s}} \,dx\, dy=0.
\]
\end{lemma}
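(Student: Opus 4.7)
The plan is to decouple $u$ and $\varphi$ via the Cauchy--Schwarz inequality and then reduce the analysis to the standard fractional Gagliardo kernel by invoking the ellipticity bound on $M$ (the standing condition~\eqref{matdefpos} inherited from the framework of Proposition~\ref{Proplimito0}). The decisive ingredient is then the simple observation that, on the truncated domain $B_R$, the kernel $|y|^{-n-2s}$ is, for small $s$, less singular than $|y|^{-n-2\overline s}$, so the finite $H^{\overline s}$-norms of $u$ and $\varphi$ absorb everything while the prefactor $c_{n,s}$ provides the vanishing factor.

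To begin, I would apply the weighted Cauchy--Schwarz inequality with weight $|M(x-y,y)y|^{-n-2s}$ to bound the quantity in the statement by $\mathcal I_s(u)^{1/2}\,\mathcal I_s(\varphi)^{1/2}$, where for $w\in H^{\overline s}(\R^n)$ I set
\[
\mathcal I_s(w):=c_{n,s}\iint_{\R^n\times B_R}\frac{|w(x)-w(x-y)|^2}{|M(x-y,y)y|^{n+2s}}\,dx\,dy.
\]
It then suffices to show that $\mathcal I_s(w)\to 0$ as $s\searrow 0$ for any $w\in H^{\overline s}(\R^n)$. By Lemma~\ref{propnormeequiv00}, the denominator satisfies $|M(x-y,y)y|\ge c_{\alpha,\beta,n}|y|$, so $\mathcal I_s(w)$ is dominated, up to a constant depending only on $\alpha$, $\beta$ and $n$, by the analogous quantity with the plain Riesz kernel $|y|^{-n-2s}$.

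For $0<s<\overline s$ and $|y|\le R$, I would next invoke the elementary pointwise bound $|y|^{-n-2s}\le R^{2(\overline s-s)}\,|y|^{-n-2\overline s}$ (valid since the exponent $2(\overline s-s)$ is positive), and enlarge the $y$-integration to all of $\R^n$, arriving at
\[
\mathcal I_s(w)\le C\,R^{2(\overline s-s)}\,\frac{c_{n,s}}{c_{n,\overline s}}\,[w]_{\overline s}^2.
\]
Since $[w]_{\overline s}<+\infty$ by the hypothesis $w\in H^{\overline s}(\R^n)$, since $R^{2(\overline s-s)}$ is uniformly bounded for $s\in(0,\overline s)$ (with finite limit $R^{2\overline s}$), and since $c_{n,s}\to 0$ linearly in $s$ by~\eqref{proplimiticNs}, the right-hand side vanishes as $s\searrow 0$. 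Feeding this back into the Cauchy--Schwarz bound yields the claim. There is no substantive obstacle in this scheme: the only point requiring any care is checking that the constant arising in the trade $|y|^{-n-2s}\mapsto|y|^{-n-2\overline s}$ remains bounded uniformly as $s\searrow 0$, which is immediate from the explicit form $R^{2(\overline s-s)}$.
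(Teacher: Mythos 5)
Your proof is correct and essentially matches the paper's: both reduce to the Riesz kernel via the ellipticity bound~\eqref{matdefpos}, compare the $s$-kernel to the $\overline s$-kernel on the truncated domain $B_R$ (you via the explicit factor $R^{2(\overline s-s)}$, the paper via monotonicity of the integral in $s$ after a rescaling to $R=1$), and conclude from $c_{n,s}\to 0$. The only cosmetic difference is that you decouple $u$ and $\varphi$ by Cauchy--Schwarz at the outset, whereas the paper keeps the product together and invokes Cauchy--Schwarz only to verify finiteness of the $\overline s$-integral.
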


\begin{proof}
Up to scaling, we can suppose that~$R=1$.
By~\eqref{matdefpos}, we infer that
\begin{equation}\label{serdftgybhu44}\begin{split}&
\left|c_{n, s}\iint_{\R^n\times B_1} \frac{(u(x)-u(x-y))(\varphi(x)-\varphi(x-y))}{|M(x-y, y) y|^{n+2s}} \,dx\, dy\right| \\&\qquad \le\frac{c_{n, s}}{\beta^{n+2s}} \int_{\R^n}\left(\,\int_{B_1} \frac{|u(x)-u(x-y)||\varphi(x)-\varphi(x-y)|}{|y|^{n+2s}} \,dy\right) \,dx.\end{split}
\end{equation}
We set
\[
g(s):=\int_{\R^n}\left(\,\int_{B_1} \frac{|u(x)-u(x-y)||\varphi(x)-\varphi(x-y)|}{|y|^{n+2s}} \,dy\right) \,dx.
\]
We notice that $g(s)$ is nondecreasing and  that~$g(\overline{s})<+\infty$.
{F}rom this, we infer that
\[
\lim_{s\searrow 0} g(s) \le g(\overline{s}).
\]
Using this information, \eqref{proplimiticNs} and~\eqref{serdftgybhu44}, we obtain the desired result.
\end{proof}

\begin{proof}[Proof of Proposition~\ref{Proplimito0}]
If~$\varrho\in(0,+\infty)$, the desired limit follows from Lemma~\ref{lemmapercheufbvbv09876}. Hence, from now on, we suppose that~$\varrho=+\infty$.

We observe that
\begin{equation}\label{ygiuhoijnpmkoèlp,ò22}
\begin{split}
&\frac{c_{n, s}}{2}\iint_{\R^n\times \R^n} \frac{(u(x)-u(x-y))(\varphi(x)-\varphi(x-y))}{|M(x-y, y) y|^{n+2s}} \,dx\, dy\\
&\quad=\frac{c_{n, s}}{2}\int_{\R^n}\left(\,\int_{B_1} \frac{(u(x)-u(x-y))(\varphi(x)-\varphi(x-y))}{|M(x-y, y) y|^{n+2s}} \,dy\right) \,dx\\
&\qquad+ \frac{c_{n, s}}{2}\int_{\R^n}\left(\,\int_{\R^n\setminus B_1} \frac{(u(x)-u(x-y))(\varphi(x)-\varphi(x-y))}{|M(x-y, y) y|^{n+2s}} \,dy\right) \,dx\\&\quad=: I_1 +I_2.
\end{split}
\end{equation}

In light of Lemma~\ref{lemmapercheufbvbv09876}, we have that
\begin{equation}\label{limites000I1}
\lim_{s\searrow 0} |I_1|=0.
\end{equation}

We now focus on $I_2$ and we claim that
\begin{equation}\label{claimcrtfyvgubhijnòom}
\lim_{s\searrow 0} \frac{c_{n, s}}{2} \int_{\R^n}\left(\,\int_{\R^n\setminus B_1} \frac{u(x-y)\varphi(x)}{|M(x-y, y) y|^{n+2s}} \,dy\right) \,dx = 0.
\end{equation}
Indeed, by~\eqref{matdefpos} and the H\"older inequality, we have
\[
\begin{split}
&\int_{\R^n}\left(\,\int_{\R^n\setminus B_1} \frac{|u(x-y)| |\varphi(x)|}{|M(x-y, y) y|^{n+2s}}\, dy\right) \,dx\le\frac{1}{\beta^{n+2s}}\int_{\R^n} |\varphi(x)|\left(\,\int_{\R^n\setminus B_1} \frac{|u(x-y)|}{|y|^{n+2s}} \,dy\right) \,dx\\
&\quad\le\frac{1}{\beta^{n+2s}}\int_{\R^n} |\varphi(x)| \left(\,\int_{\R^n\setminus B_1(x)} |u(z)|^2 dz\right)^\frac12  \left(\,\int_{\R^n\setminus B_1} \frac{dy}{|y|^{2n+4s}}\right)^\frac12 \,dx\\
&\quad\le\frac{1}{\beta^{n+2s}}\left(\frac{\omega_{n-1}}{n+4s}\right)^{\frac12} \|u\|_{L^2(\R^n)} \|\varphi\|_{L^1(\R^n)}.
\end{split}
\]
{F}rom this and~\eqref{proplimiticNs}, we deduce that
\[
\begin{split}
&\lim_{s\searrow 0} \frac{c_{n, s}}{2} \int_{\R^n}\left(\,\int_{\R^n\setminus B_1} \frac{u(x-y)\varphi(x)}{|M(x-y, y) y|^{n+2s}} \,dy\right) \,dx\\
&\quad\le \lim_{s\searrow 0} \frac{c_{n, s}}{2\beta^{n+2s}} \left(\frac{\omega_{n-1}}{n+4s}\right)^{\frac12} \|u\|_{L^2(\R^n)} \|\varphi\|_{L^1(\R^n)} =0.
\end{split}
\]
In the same way, by using the change of variables $z=x-y$, one can prove that
\begin{equation}\label{claimcrtfyvgubhijnòom34567}
\lim_{s\searrow 0} \frac{c_{n, s}}{2} \int_{\R^n}\left(\,\int_{\R^n\setminus B_1} \frac{u(x)\varphi(x-y)}{|M(x-y, y) y|^{n+2s}} \,dy\right) \,dx =0.
\end{equation}

We now claim that
\begin{equation}\label{claimcrtfyvgubhijnòom3456700}
\begin{split}
&\lim_{s\searrow 0} \frac{c_{n, s}}{2} \int_{\R^n}\left(\,\int_{\R^n\setminus B_1} \frac{u(x) \varphi(x)}{|M(x-y, y) y|^{n+2s}}\, dy\right) \,dx\\
&= \lim_{s\searrow 0} \frac{c_{n, s}}{2} \int_{\R^n}\left(\,\int_{\R^n\setminus B_1} \frac{u(x-y) \varphi(x-y)}{|M(x-y, y) y|^{n+2s}} \,dy\right) \,dx\\
&= \frac{1}{2\omega_{n-1}} \int_{\R^n}u(x)\varphi(x) \int_{\mathcal S^{n-1}}\frac{d\mathcal H^{n-1}_\psi}{|M^\infty(x, \psi)\,\psi|^n} \,dx.
\end{split}
\end{equation}
Indeed, by using the change of variables  $r:=\rho^{2s}$, we have
\[
\begin{split}
&\int_{\R^n}\left(\,\int_{\R^n\setminus B_1} \frac{u(x) \varphi(x)}{|M(x-y, y) y|^{n+2s}} \,dy\right) \,dx = \int_{\R^n}u(x)\varphi(x)\left(\,\int_{\R^n\setminus B_1} \frac{dy}{|M(x-y, y) y|^{n+2s}}\right) \,dx\\
&\quad= \int_{\R^n}u(x)\varphi(x)\left(\,\int_1^{+\infty}\int_{\mathcal S^{n-1}}\frac{\rho^{-1-2s} \,d\rho}{|M(x-\rho\psi, \rho\psi)\,\psi|^{n+2s}} \,d\mathcal H^{n-1}_\psi\right) \,dx\\
&\quad=\frac{1}{2s}\int_{\R^n}u(x)\varphi(x)\left(\,\int_1^{+\infty}\int_{\mathcal S^{n-1}}\frac{dr\,d\mathcal H^{n-1}_\psi}{r^2|M(x-r^{\frac{1}{2s}}\psi, r^{\frac{1}{2s}}\psi)\,\psi|^{n+2s}}\right) \,dx.
\end{split}
\]
We notice that, by~\eqref{matdefpos} and recalling that $u\in H^{\overline s}(\R^n)$, we have that
\[
\frac{u(x)\varphi(x)}{r^2|M(x-r^{\frac{1}{2s}}\psi, r^{\frac{1}{2s}}\psi)\,\psi|^{n+2s}}\le \frac{|u(x)\varphi(x)|}{r^2 \beta^{n+2s}}\in L^1\Big(\R^n\times(1,+\infty)\times\mathcal S^{n-1}; dx\,dr\,d\mathcal H^{n-1}_\psi\Big)
\]
Hence, in light of Lemma~\ref{lemmamatrice} (here with $R=1$), we have
\begin{equation}\label{tcfyvigubhoijnpkmolp,ò.}
\begin{split}
&\lim_{s\searrow 0} \frac{c_{n, s}}{2}\int_{\R^n}u(x)\varphi(x)\left(\;\int_{\R^n\setminus B_1} \frac{dy}{|M(x-y, y)\, y|^{n+2s}}\right)\,dx\\
&\quad= \frac{1}{2\omega_{n-1}} \int_{\R^n}u(x)\varphi(x) \int_{\mathcal S^{n-1}}\frac{d\mathcal H^{n-1}_\psi}{|M^\infty(x, \psi)\,\psi|^n} \,dx.
\end{split}
\end{equation}
Moreover,  the change of variables $z:=x-y$ leads to
\[
\int_{\R^n}\int_{\R^n\setminus B_1} \frac{u(x-y) \varphi(x-y)}{|M(x-y, y) y|^{n+2s}} \,dx\, dy =\int_{\R^n} u(z) \varphi(z)\left(\,\int_{\R^n\setminus B_1} \frac{dy}{|M(z, y) y|^{n+2s}}\right) \,dz.
\]
{F}rom this and assumption~\eqref{structuralM}, setting $t:=-y$ we deduce that
\[
\begin{split}
&\int_{\R^n}\int_{\R^n\setminus B_1} \frac{u(x-y) \varphi(x-y)}{|M(x-y, y) y|^{n+2s}} \,dx\, dy =\int_{\R^n} u(z) \varphi(z)\left(\,\int_{\R^n\setminus B_1} \frac{dy}{|M(z+y, -y) y|^{n+2s}}\right) \,dz\\
&\qquad=\int_{\R^n} u(z) \varphi(z)\left(\,\int_{\R^n\setminus B_1} \frac{dy}{|M(z+y, -y)(-y)|^{n+2s}}\right) \,dz\\
&\qquad=\int_{\R^n} u(z) \varphi(z)\left(\,\int_{\R^n\setminus B_1} \frac{dt}{|M(z-t, t) t|^{n+2s}}\right) \,dz.
\end{split}
\]

Bearing in mind~\eqref{tcfyvigubhoijnpkmolp,ò.}, we thus obtain the claim in~\eqref{claimcrtfyvgubhijnòom3456700}, as desired.

Accordingly, combining~\eqref{ygiuhoijnpmkoèlp,ò22}, \eqref{limites000I1}, \eqref{claimcrtfyvgubhijnòom}, \eqref{claimcrtfyvgubhijnòom34567} and~\eqref{claimcrtfyvgubhijnòom3456700}, we obtain the desired result in~\eqref{limitesto0distr}.
\end{proof}

\end{appendix}

\section*{Acknowledgements} 
DA is supported by Ministerio of Ciencia e Innovación (Spain) and European Regional Development Fund (ERDF) PID2021-122122NB-I00 and Junta de Andalucìa FQM-116.

SD,  CS and EV are members of the Australian Mathematical Society (AustMS). CS and EPL are members of the INdAM--GNAMPA.

CS acknowledges the support of the INdAM-GNAMPA through the grant ``Borse di studio per l'estero", of which she is the recipient.

CS thanks the Departamento de An\'alisis Matem\'atico of the Universidad de Granada (where part of this work was carried out) for the warm and generous hospitality.

This work has been supported by the Australian Laureate Fellowship FL190100081 and by the Australian Future Fellowship
FT230100333.

\vfill

\end{document}